%% file: main.tex
\documentclass[a4paper,onecolumn,11pt,unpublished]{quantumarticle}
\pdfoutput=1
\input{preamble.tex}
\hypersetup{pdftitle={Local Anticoncentration for Gaussian Boson Sampling via Conditional Wishart Geometry}}
\begin{document}
\title{Local Anticoncentration for Gaussian Boson Sampling via Conditional Wishart Geometry}
\author{Hongru Zhao}
\affiliation{School of Statistics, University of Minnesota, Minneapolis, Minnesota, USA}
\email{zhao1118@umn.edu}
\begin{abstract}
Local anticoncentration controls the conversion of additive output
probability estimates to relative estimates in Gaussian boson sampling.
We prove a center-uniform quadratic small-ball bound for hafnians of
complex Gaussian transpose Gram matrices for $2n$ selected modes, with an explicit finite coefficient
that is polynomial when the number of Gaussian rows is at least of order
$n^2/\log n$. Letting the Gaussian row count tend to infinity yields
anticoncentration for the independent complex symmetric Gaussian ensemble.
For the transpose Gram hafnian bound, the proof compresses dependent hafnian
cofactors by Fourier interpolation and controls their self-coupled variance
through a conditional Wishart identity that preserves the transpose Gram
coordinate. Independent perturbations satisfy related bounds, and a positive
mixture density shows that the local quadratic power is exact. Together
with the uniform hiding theorem in the companion paper, the result gives
local anticoncentration for finite Haar interferometers.
\end{abstract}
\maketitle
\input{sections/00_quantum_introduction}
\input{sections/01_introduction}
\input{sections/02_preliminaries}
\input{sections/03_fourier_compression}
\input{sections/04_conditional_wishart}
\input{sections/05_main_proof}
\input{sections/06_conditioning_consequences}
\input{sections/07_discussion}

\section*{Author contributions and use of artificial intelligence}
Hongru Zhao is responsible for the mathematical content, its presentation,
and the accompanying source materials. OpenAI's GPT-5.6 Sol and GPT-6 Astra Ultra models assisted with
literature searches, manuscript restructuring and editing, algebraic checks,
and preparation of LaTeX and Lean source materials.

\section*{Data and code availability}
No experimental data were created or analyzed. Unconditional Lean verification
of Theorems~\ref{thm:main-finite} and~\ref{thm:symmetric-gaussian-anticoncentration}
is maintained in the GitHub repository~\cite{ZhaoComplexGramHafniansGitHub2026}. The broader formalization and equation-to-code
crosswalk for the archived manuscript version are available on
Zenodo~\cite{ZhaoAnticoncentrationLeanZenodo2026}. The proved endpoints use
no additional scientific axioms.

\clearpage
\appendix
\input{sections/appendix_fourier_algebra}
\input{sections/appendix_wishart_boundary}
\input{sections/appendix_second_moment}
\input{sections/appendix_symmetric_limit}
\input{sections/appendix_angular_obstruction}

\input{sections/appendix_conditioning_details}

\bibliographystyle{quantum}
\bibliography{references}
\end{document}

%% file: sections/00_quantum_introduction.tex
\section{Introduction}
\label{sec:quantum-introduction}
Anticoncentration is one of the analytic requirements in approximate
sampling arguments for quantum computational advantage. An additive
estimate of an output probability gives useful relative accuracy only when
the probability is not too small. Aaronson and Arkhipov formulated this
obstruction for ordinary boson sampling
\cite{AaronsonArkhipov2013}. Gaussian boson sampling (GBS) replaces single
photons by squeezed Gaussian inputs and replaces permanents by hafnians
\cite{LundEtAl2014GaussianState,HamiltonEtAl2017GBS,KruseEtAl2019GBS}.
Its complexity analysis therefore raises the corresponding lower-tail
question for random Gaussian hafnians.

We prove local anticoncentration for both Gaussian ensembles used in that
analysis. If $X\in\C^{k\times2n}$ has independent standard circular
complex Gaussian entries and $H_{k,n}=\haf(X^{\T}X)$, then for $k\ge4n$
\begin{equation}
 \sup_{z\in\C}\Pp\{|H_{k,n}-z|\le\varepsilon\sigma_{k,n}\}
       \le\min\{1,B_{k,n}\varepsilon^2\},
 \label{eq:intro-local-result}
\end{equation}
where $\sigma_{k,n}^2=\E|H_{k,n}|^2$ and the coefficient is explicit.
It is polynomial in $n$ whenever $n^2/k=O(\log n)$.
For the independent complex symmetric Gaussian ensemble, the corresponding
coefficient is at most $2\sqrt{n/\pi}$. The exponent two is the natural
planar disk scaling, uniform over the center and independent of polynomial
degree. A positive continuous mixture density also shows that this local
power is exact in the stated finite-dimensional regime.

Hamilton et al.\ and Kruse et al.\ introduced hafnian anticoncentration as
an ingredient connecting additive squared-hafnian estimation to relative
estimation \cite{HamiltonEtAl2017GBS,KruseEtAl2019GBS}.
Their discussion distinguishes the estimation conjectures from the
Gaussian lower-tail property needed in that connection.
Corollary~\ref{cor:historical-lower-tail} below supplies an explicit
polynomial lower-tail bound for the independent symmetric Gaussian
hafnian, proving the hafnian analogue of Aaronson and Arkhipov's
Permanent Anti-Concentration Conjecture
\cite[Conjecture~1.6, Eq.~(1.3)]{AaronsonArkhipov2013}.
The finite transpose
Gram theorem supplies the analogous property in a specified growth regime.
We do not identify either result with a proof of the broader
average-case hardness or oracle-equivalence assertions. In particular,
control of a squared modulus alone does not recover the phase of a complex
hafnian.

Earlier GBS analyses developed exact moments and located regimes of weak
anticoncentration \cite{EhrenbergEtAl2025Transition,EhrenbergEtAl2025SecondMoment}.
Zhao \cite[Eq.~(8)]{ZhaoExactMoments2026} and, independently, Shou et al.\
\cite[Definition~1 and Theorem~2.1]{ShouEtAl2026AnticoncentrationEntanglement}
establish the $n^2/\log n$ scaling boundary for weak anticoncentration
under the second/fourth-moment criterion.
A moment ratio can ensure a nonnegligible chance of seeing a moderately
large output weight, but it does not by itself bound the probability of an
arbitrarily small disk. This distinction has also been emphasized in the
photonic benchmarking literature \cite{KolarovszkiEtAl2026Framework}.
Our theorem addresses the local requirement directly, in contrast with
generic polynomial small-ball estimates whose radius exponent deteriorates
with the degree \cite{CarberyWright2001}.

The dependent product law is the main mathematical difficulty. Conditioning
on the last column of $X$ gives a circular Gaussian amplitude whose
variance is an energy of hafnian cofactors computed from the same Gaussian
factor. The cofactors cannot be treated as independent of that factor.
We adapt the Fourier compression mechanism of Koehler and Leung for
Ginibre permanents \cite{KoehlerLeung2026}, using a shared-map hafnian
expansion that retains the dependence exactly. A conditional Wishart
identity then changes the Hermitian Gram coordinate while preserving the
transpose Gram coordinate on which the cofactors depend. This controls the
self-coupled inverse variance and yields the coefficient in
Eq.~\eqref{eq:intro-local-result}. The preserved-coordinate inequality is
stated for general measurable directions before its hafnian specialization.

The results apply to ideal collision-free GBS with equal squeezing. They
also include independent perturbations of the Gaussian factor or the
independent symmetric matrix; these are mathematical perturbations, not a
model of arbitrary optical noise. The optical setting is motivated by
photonic implementations \cite{ZhongEtAl2020Photons,MadsenEtAl2022Programmable,DengEtAl2023PseudoPNR},
while classical simulation and realistic error models remain separate
benchmarks \cite{QuesadaArrazola2020ExactSimulation,BulmerEtAl2022Boundary}.

The companion article, \emph{Uniform Hiding and Two Routes to Relative
Accuracy in Gaussian Boson Sampling}~\cite{ZhaoUniformHiding2026}, proves
the finite-Haar replacement and compares the two Gaussian routes at one
absolute additive threshold. Its hiding theorem resolves the quantitative
product-hiding conjecture of Ehrenberg et al.
\cite[Conjecture~1 and Supplemental Eq.~(S62)]{EhrenbergEtAl2025Transition}.
The present Gaussian proof does not use that theorem. Their composition
is recorded in Corollary~\ref{cor:finite-haar-companion}, so the quantum
consequence is visible here without duplicating the companion's
interferometer proof or two-route comparison.

%% file: sections/01_introduction.tex
\section{Physical setting and Gaussian reference}

There are $M$ optical modes, the first $K$ inputs are identical single mode
squeezed vacuum states, and the remaining inputs are vacuum.  A passive Haar
random interferometer mixes the modes before photon number resolving
detection.  A prescribed event without collisions records one photon in each of
$N=2n$ selected output modes.  Selecting those output rows and the squeezed
input columns produces a rectangular interferometer block.  Its product with
its ordinary transpose is the complex symmetric hafnian kernel in the GBS
probability formula \cite{HamiltonEtAl2017GBS,KruseEtAl2019GBS}.  The
transpose is not the adjoint.  Figure~\ref{fig:gbs-setup} summarizes this
physical dictionary.

\begin{figure}[tbp]
  \centering
  \includegraphics[width=0.92\textwidth]{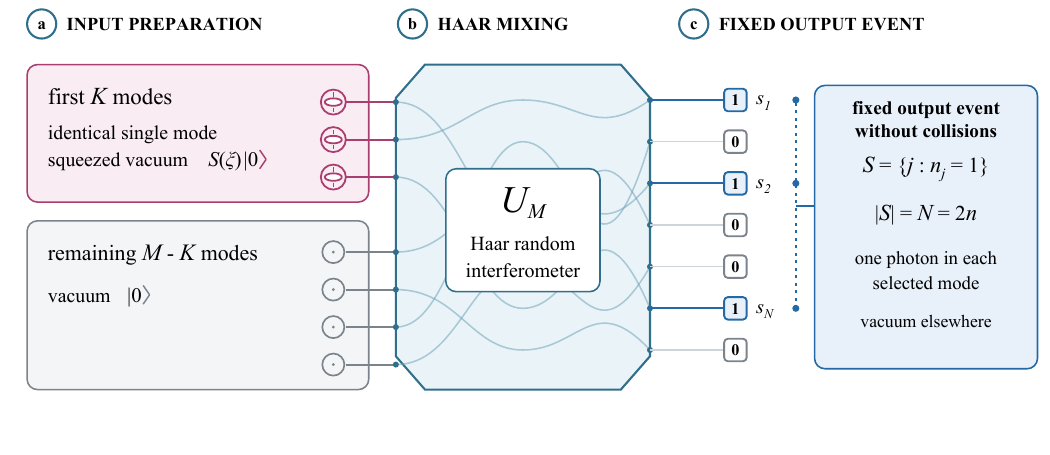}
  \caption{Ideal Gaussian boson sampling and its optical dictionary.
  Identically squeezed vacuum states and vacuum states enter a passive Haar
  random interferometer.  Photon number resolving detectors record a fixed
  output event without collisions.  The selected interferometer rows and
  squeezed input columns form a rectangular block; its transpose-gram product
  is the complex symmetric matrix whose hafnian determines the event
  probability.}
  \label{fig:gbs-setup}
\end{figure}

{The theorem below studies the corresponding Gaussian
transpose-gram product.  We orient the Gaussian matrix as
$X\in\C^{k\times2n}$, so in the GBS correspondence $k=K$ is the number of
squeezed inputs and $2n=N$ is the number of selected output modes.}  This
Article proves the Gaussian theorem and does not use a finite Haar
approximation.

For a complex symmetric $2n\times2n$ matrix $B=(b_{ij})$, define
\begin{equation}
 \haf(B)=\sum_{\mu\in\mathcal M_{2n}}
   \prod_{\{i,j\}\in\mu} b_{ij},
 \label{eq:haf-definition}
\end{equation}
where $\mathcal M_{2n}$ is the set of perfect matchings.  Let
\begin{equation}
 H_{k,n}:=\haf(X^{\T}X),
 \qquad X\in\C^{k\times2n},
 \label{eq:H-definition-intro}
\end{equation}
with independent standard circular complex Gaussian entries.  The transpose
is not a conjugate transpose.  Hence $H_{k,n}$ is a homogeneous holomorphic
Gaussian polynomial of degree $2n$ built from a dependent complex symmetric
matrix.

We call $X^{\T}X$ the transpose-gram matrix.  The distinct matrix $X^*X$
will always be called the Hermitian Gram matrix.

Exact moment analyses for GBS reveal a transition with the number of squeezed
inputs \cite{EhrenbergEtAl2025Transition,EhrenbergEtAl2025SecondMoment,
ZhaoExactMoments2026}.  Our local result instead controls every disk center and
every radius with the natural planar exponent two and an explicit coefficient.

\subsection{Main results}

Set $\sigma_{k,n}^2:=\E|H_{k,n}|^2$ and
\begin{equation}
 b_n:=\frac{2\Gamma(n+\tfrac12)}{\sqrt\pi\,\Gamma(n)}.
 \label{eq:b-limit}
\end{equation}

{The exact hafnian second moment is restated and its two
derivations are compared in
{\cref{prop:exact-second-moment}}:}
{
\begin{equation}
 \sigma_{k,n}^2=(2n-1)!!\prod_{q=0}^{n-1}(k+2q).
 \label{eq:sigma-main}
\end{equation}
}

\begin{theorem}[Uniformly shifted quadratic anticoncentration]
\label{thm:main-finite}
Let $n\ge1$ and $k\ge4n$.  For every $z\in\C$ and $\varepsilon\ge0$,
\begin{equation}
 \Pp\{|H_{k,n}-z|\le\varepsilon\sigma_{k,n}\}
 \le \min\{1,B_{k,n}\varepsilon^2\},
 \label{eq:main-bound}
\end{equation}
where
\begin{align}
 B_{k,n}
 &=b_n\frac{k}{k-1}\prod_{r=2}^{n}\frac{k+2r-2}{k-4r+1},
 \label{eq:B-exact}\\
 B_{k,n}
 &\le b_n\exp\!\left\{
 \frac{3n^2-2}{k}+\frac{9n^3}{k(k-4n+1)}\right\}.
 \label{eq:B-simple}
\end{align}
For fixed $n$, $B_{k,n}\to b_n$ as $k\to\infty$.  If $k\ge8n$, then
\begin{equation}
 B_{k,n}=b_n\exp\!\left(\frac{3n^2}{k}+\theta_{k,n}\right),
 \qquad
 |\theta_{k,n}|\le\frac2k+\frac{94n^3}{k^2}.
 \label{eq:B-bridge}
\end{equation}
\end{theorem}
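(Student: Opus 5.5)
The plan is to condition on the last column of $X$. Write $X=[X',x]$ with $x\in\C^{k}$ the last column and $X'\in\C^{k\times(2n-1)}$. Expanding the hafnian along the last row and column of $X^{\T}X$ gives
\begin{equation*}
 H_{k,n}=x^{\T}v,\qquad v=\sum_{i=1}^{2n-1}X'_{\cdot i}\,c_i,
\end{equation*}
where $c_i=\haf$ of the $(2n-2)\times(2n-2)$ transpose-Gram minor of $X'$ obtained by deleting index $i$. Each cofactor $c_i$ is a function of the transpose Gram matrix $S=X'^{\T}X'$ only.

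Conditionally on $X'$, the variable $H_{k,n}$ is a circular complex Gaussian with variance $|v|^2$. Its density is therefore bounded by $1/(\pi|v|^2)$ uniformly in the center. This gives, for every $z$,
\begin{equation*}
 \Pp\{|H_{k,n}-z|\le\varepsilon\sigma_{k,n}\}\le\varepsilon^2\sigma_{k,n}^2\,\E\bigl[|v|^{-2}\wedge(\cdot)\bigr]\le\varepsilon^2\,\sigma_{k,n}^2\,\E|v|^{-2},
\end{equation*}
together with the trivial bound $1$. This disposes of the uniformity in $z$. The whole problem then reduces to the inverse-moment estimate $\sigma_{k,n}^2\E|v|^{-2}\le B_{k,n}$, with $\sigma_{k,n}^2$ taken from \cref{prop:exact-second-moment}.

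To bound $\E|v|^{-2}$, write $|v|^2=c^{*}(X'^{*}X')c$. Here the direction $c$ is measurable with respect to the transpose Gram coordinate $S$, while the quadratic form uses the Hermitian Gram matrix $X'^{*}X'$. The conditional Wishart step would identify the law of $X'^{*}X'$ given $S$, or at least a one-sided comparison for $\E[(c^{*}X'^{*}X'c)^{-1}\mid S]$ along an $S$-measurable unit direction. The aim is a bound of the form $\E[(c^*X'^*X'c)^{-1}\mid S]\le \frac{1}{(k-1)\,|c|^2}\cdot(\text{dimension correction})$. The inverse-$\chi^2$ mean $1/(k-1)$ is the source of the factor $k/(k-1)$ in \eqref{eq:B-exact}.

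It then remains to control $\E|c|^{-2}$, or a suitable compressed version of it. This is where Fourier compression enters. The vector of cofactors $c$ is replaced by a single holomorphic combination, obtained by interpolating $\sum_i\lambda_i c_i$ over roots of unity. Its inverse moment can be compared with the inverse moment of a hafnian of the same type with $n$ replaced by $n-1$, but with an effectively reduced row count. I would set up this recursion so that each step costs the factor $(k+2r-2)/(k-4r+1)$. The numerator comes from the ratio $\sigma_{k,r}^2/\sigma_{k,r-1}^2$, and the denominator from losing about $4$ degrees of freedom per level in the Wishart conditioning. The hypothesis $k\ge4n$ keeps all of these denominators positive. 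Iterating from $r=n$ down to $r=2$ produces the product in \eqref{eq:B-exact}. The base level, a Gaussian-chaos-type inverse moment, produces the Gamma-ratio constant $b_n$.

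The main obstacle is the self-coupling: $c$ and $X'$ are dependent, so one cannot simply average $|X'c|^{-2}$ as if $c$ were fixed. The preserved-coordinate conditional Wishart identity is exactly what must be proved carefully. I would try first to parametrize $X'=U\,S^{1/2}$-type factorizations in which $S$ is fixed and the residual freedom acts on the Hermitian Gram coordinate, and then compute the conditional law.

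The explicit simplifications \eqref{eq:B-simple} and \eqref{eq:B-bridge} are routine. One takes logarithms and writes
\begin{equation*}
 \log\frac{k+2r-2}{k-4r+1}=\frac{6r-3}{k}+O\!\left(\frac{r^2}{k(k-4r+1)}\right),
\end{equation*}
then sums over $r$, giving the main term $\sum_{r}(6r-3)/k\approx 3n^2/k$. The factor $\log\frac{k}{k-1}$ is absorbed into the $2/k$ error. Finally $B_{k,n}\to b_n$ follows since every factor tends to $1$ for fixed $n$.
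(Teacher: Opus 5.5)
Your outer reduction matches the paper: conditioning on the last column gives $H_{k,n}\mid X'\sim\CN(0,\norm{v}^2)$, hence the center-uniform bound $\varepsilon^2\sigma_{k,n}^2\E\norm{v}^{-2}$. The theorem then becomes an inverse-moment recursion that telescopes into \eqref{eq:B-exact}. The gap is that neither inequality driving the recursion is supplied, and the mechanism you sketch for the Fourier step cannot work.

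Bounding $\E\norm{c}^{-2}$ through the inverse moment of a single combination $\sum_i\lambda_ic_i$ is vacuous. Any nonzero combination is, conditionally on all but one column, affine in a Gaussian column. For instance, $c_1$ has the law of $H_{k,n-1}$ and is conditionally $\CN(0,V_{n-1})$. So the inverse second moment is $+\infty$, just as $\E\abs{Z}^{-2}=\infty$ for a planar Gaussian. The gain must come from the full $(2r-1)$-dimensional vector.

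The paper compresses only at the level of characteristic functions: $\abs{\E e^{i\operatorname{Re}(z^*C_r)}}\le\E e^{-\norm{z}^2V_{r-1}/4}$. This rests on the following structure. Conditionally on all columns but $X,Y$, the first two cofactors are $Y^{\T}q_R$ and $X^{\T}q_R$ with the same $q_R$, and the rest are bilinear, $X^{\T}M_{j,R}Y$. Gaussian geometric interpolation and H\"older then apply while preserving $\norm z$. Integrating against a nonnegative Gaussian kernel on $\R^{4r-2}$ gives $\E e^{-tW_r}\le\E e^{-tV_{r-1}G_{2r-1}}$, with $G_{2r-1}\sim\GammaLaw(2r-1,1)$ independent. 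The representation $x^{-1}=\int_0^\infty e^{-tx}\,\dd t$ then yields $\E W_r^{-1}\le\E V_{r-1}^{-1}/(2r-2)$, at the same row count $k$ and not a reduced one.

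For the Wishart step, you isolate the self-coupling correctly, and your conditional target does hold with constant $1/(k-4r+1)$. But you defer its proof to computing the law of $X'^*X'$ given $S$. No group-orbit shortcut exists for that: the unitary symmetries $X'\mapsto UX'$ of the Gaussian law that fix $S$ are the real orthogonal ones, and these fix $Q$ too.

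The paper computes no conditional law. It proves a score identity along the directions $\Delta(H)$ that leave $S$ fixed, using the lifted field $\tfrac12R(R^{\T}R)^{-1}D$, a cutoff, and Gaussian integration by parts. The identity is valid for bounded $H$ depending Borel-measurably on $S$. Take $H=cc^*/(W+\delta)^2$, $m=2r-1$, and $\mathcal S(A)=Q-\overline S\,\overline Q^{-1}S$. Letting $\delta\downarrow0$ gives $\E[c^*\mathcal S(A)^{-1}c/W^2]=\E W^{-1}/(k-2m-1)$. The Schur order $Q^{-1}\preceq\mathcal S(A)^{-1}$ and Cauchy--Schwarz, $W^2\le(c^*Qc)(c^*Q^{-1}c)$, then give $\E V_r^{-1}\le\E W_r^{-1}/(k-4r+1)$.

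Your constant bookkeeping is also off in two places:
\begin{itemize}
\item The factor $k/(k-1)$ comes from the base case $\E\norm{x_1}^{-2}=1/(k-1)$. The Wishart step instead contributes $1/(k-4r+1)$ at every level, including $r=n$.
\item The constant $b_n=(2n-1)!!/\prod_{r=2}^n(2r-2)$ comes from pairing the double factorial in $\sigma_{k,n}^2$ with the Gamma inverse moments $1/(2r-2)$. It is not a base-level chaos moment.
\end{itemize}
Your logarithmic treatment of \eqref{eq:B-simple} and \eqref{eq:B-bridge} via $1+(6r-3)/(k-4r+1)$ is the paper's route.
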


\begin{proof}
The probabilistic proof is given in \cref{sec:proof-main-finite}.  The
coefficient identities and bounds that complete the argument are proved in
Appendix~\ref{app:second-moment}.
\end{proof}

\begin{corollary}[Independent additive perturbations before the transpose-gram map]
\label[corollary]{cor:independent-matrix-shift}
Let $n\ge1$ and $k\ge4n$.  Let
$\Delta\in\C^{k\times2n}$ be an arbitrary random matrix, independent of the
standard complex Gaussian matrix $X\in\C^{k\times2n}$, with no moment
assumption.  Set

\begin{equation}
 \widetilde H_{k,n}
 :=\haf\!\left((X+\Delta)^{\T}(X+\Delta)\right).
 \label{eq:independent-matrix-shift-observable}
\end{equation}

Then, for every $w\in\C$ and $\varepsilon\ge0$,
\begin{equation}
 \Pp\!\left\{
   \abs{\widetilde H_{k,n}-w}
   \le\varepsilon\sigma_{k,n}
 \right\}
 \le\min\!\left\{1,B_{k,n}\varepsilon^2\right\},
 \label{eq:independent-matrix-shift-bound}
\end{equation}
where $\sigma_{k,n}$ is the centered Gaussian scale in
\cref{thm:main-finite}.
\end{corollary}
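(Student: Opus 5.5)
The plan is to reduce the corollary to a deterministic shift by conditioning on $\Delta$, and then check that the probabilistic proof of \cref{thm:main-finite} never uses that $X$ is centered. \emph{Step one: conditioning.} Since $\Delta$ is independent of $X$, Fubini gives
\[
\Pp\{|\widetilde H_{k,n}-w|\le\varepsilon\sigma_{k,n}\}
=\E\Bigl[\Pp\bigl\{|\haf((X+D)^{\T}(X+D))-w|\le\varepsilon\sigma_{k,n}\bigr\}\big|_{D=\Delta}\Bigr].
\]
This uses only measurability of $(x,D)\mapsto\haf((x+D)^{\T}(x+D))$ and needs no moment assumption on $\Delta$. It therefore suffices to prove \eqref{eq:independent-matrix-shift-bound} for every fixed $D\in\C^{k\times2n}$, with the same coefficient $B_{k,n}$ and the unshifted normalization $\sigma_{k,n}$. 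Setting $D=0$ recovers \cref{thm:main-finite}.

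\emph{Step two: the last-column reduction for a shifted factor.} Write $Y=X+D$ with columns $y_1,\dots,y_{2n}$. Perfect matchings never use the diagonal entries of $Y^{\T}Y$, so the hafnian is linear in $y_{2n}$:
\[
\haf(Y^{\T}Y)=y_{2n}^{\T}v(Y'),\qquad v(Y')=\sum_{j<2n}y_j\,\haf\bigl((Y'^{\T}Y')_{\hat j}\bigr),
\]
where $Y'$ collects the first $2n-1$ columns and $(Y'^{\T}Y')_{\hat j}$ deletes row and column $j$. Conditionally on $Y'$, the variable $y_{2n}^{\T}v$ is complex Gaussian with mean $d_{2n}^{\T}v$ and variance $|v|^2$. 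A complex Gaussian with variance $s^2$ has density at most $1/(\pi s^2)$ whatever its mean. Hence the conditional probability of any disk of radius $\varepsilon\sigma_{k,n}$ is at most $\varepsilon^2\sigma_{k,n}^2/|v(Y')|^2$. This is exactly the point where the theorem's center-uniformity in $z$ comes from, and the shift $d_{2n}$ is absorbed into that center. It remains to bound $\sigma_{k,n}^2\,\E|v(X'+D')|^{-2}$ (or the compressed surrogate used in \cref{sec:proof-main-finite}) by $B_{k,n}$, uniformly in $D'$.

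\emph{Step three: propagating the shift through compression and the Wishart step.} I would retrace \cref{sec:proof-main-finite} with $X'$ replaced by $X'+D'$. In the Fourier compression, the cofactor energy $|v|^2$ is bounded below by the squared modulus of a single interpolated polynomial. That polynomial is again a hafnian-type expression, linear in a fresh Gaussian direction. I expect its self-coupled inverse variance can again be controlled column by column, using only Gaussian density bounds that are uniform in the mean. Each factor of the product in \eqref{eq:B-exact} should come from one such step, together with the conditional Wishart identity, which only changes the Hermitian Gram coordinate.

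\emph{Main obstacle.} The hard part is the conditional Wishart step, because the identity there is stated for centered Gaussian rows. If it does not survive a mean, I would first try to prove the needed inverse-moment bound in the form ``for every fixed mean, $\E[\,|\text{shifted quantity}|^{-2}]\le$ its centered value.'' To get this I would write each inverse variance as a mixture, over an independent auxiliary Gaussian, of small-ball probabilities of a linear functional, for example via $1/s^2=\int_0^\infty e^{-ts^2}\,dt$ combined with a Gaussian integral. Each such small-ball probability is maximized at zero mean by Anderson's inequality, since it is the probability of a symmetric convex set under a Gaussian. If that also fails, the fallback is to carry $D$ through as a deterministic drift in the column-by-column conditioning. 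This works provided every lower bound in the proof is a density bound of the form $1/(\pi s^2)$, which is insensitive to means.
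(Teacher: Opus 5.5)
Your Steps one and two are sound. Conditioning on $\Delta$ by Fubini is legitimate, and the last-column density bound, which does not depend on the mean, reduces the claim to $\sigma_{k,n}^2\,\E|v(X'+D')|^{-2}\le B_{k,n}$ for each fixed $D'$. The gap is in proving that bound.

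\textbf{Step three fails, and not only at the Wishart step.}
\begin{itemize}
\item \emph{Interpolation.} The interpolation in \cref{lem:bilinear-interpolation} needs centered $g,h$. With means $\mu,\nu$, the linear terms become $T\nu+Lb$ and $T^{\T}\mu+La$. The endpoint characteristic functions then acquire phases and are no longer positive, and the geometric-mean identity is lost.
\item \emph{Compression symmetries.} The compression uses exchangeability of the coordinates of $C_r$, invariance of its law under a common phase, and the fact that $(C_r)_1$ has the centered law of $H_{k,r-1}$. Column-dependent shifts destroy all three.
\item \emph{Wishart score.} The score identity of \cref{lem:wishart-boundary-regularity} is centered Gaussian integration by parts. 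A mean $\mu_R$ adds the unsigned term $-2\,\E[\psi(S)\langle \mu_R,\mathcal F_D(R)\rangle_{\mathrm F}]$.
\item \emph{Misdescription.} The Fourier step is not a pointwise lower bound on $|v|^2$. It compares characteristic functions of $C_r$, and then Laplace transforms of $W_r$ and $V_{r-1}$.
\end{itemize}

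\textbf{The fallback has the right target but misapplies Anderson.} Your target $\E|v(X'+D')|^{-2}\le\E|v(X')|^{-2}$ is correct, and Anderson's inequality is the right tool. But it cannot be applied to all shifted columns jointly. For $n\ge2$, the set $\{Y':|F(Y',g)|\le\rho\}$ is a sublevel set of a polynomial of degree $2n-1$ in $Y'$, and it is not convex.

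\textbf{The missing idea is separate linearity, used one column at a time.} Write $F(y_1,\ldots,y_{2n})=\haf(Y^{\T}Y)$.
\begin{itemize}
\item Conditional on the other columns, $F-w=g_j^{\T}a+b$ with $g_j\sim\CN(0,I_k)$.
\item Since $g_j^{\T}a$ has a radial decreasing density, $\Pp\{|g_j^{\T}a+b|\le\rho\}\le\Pp\{|g_j^{\T}a|\le\rho\}$. This is trivial if $a=0$.
\item Iterating over $j=1,\ldots,2n$ with Fubini removes every column shift and the center $w$. This gives $\Pp_X\{|F(X+D)-w|\le\rho\}\le\Pp_X\{|F(X)|\le\rho\}$.
\item Applying \cref{thm:main-finite} at $z=0$ with $\rho=\varepsilon\sigma_{k,n}$ finishes the proof.
\end{itemize}
This is the paper's proof. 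It needs neither your Step two nor anything from the main proof beyond its statement.

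If you want to keep your inverse-moment form, apply the same comparison to $F(X'+D',g)$ with centered $g$. With $v=v(X'+D')$, monotone convergence gives $\rho^{-2}\Pp\{|F(X'+D',g)|\le\rho\}=\E[(1-e^{-\rho^2/|v|^2})/\rho^2]\uparrow\E|v|^{-2}$ as $\rho\downarrow0$. That yields $\E|v(X'+D')|^{-2}\le\E V_n^{-1}$, and the rest follows as in the theorem.
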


\begin{proof}
See Appendix~\ref{app:direct-symmetric}, beginning with the proof of
\cref{cor:independent-matrix-shift}.
\end{proof}

Let $G_{2n}^{\mathrm{sym}}$ denote a complex symmetric $2n\times2n$
Gaussian matrix whose entries on and above the diagonal are independent,
whose entries away from the diagonal are standard circular complex Gaussian, and whose
diagonal entries have complex variance two.
Equivalently, the diagonal normalization is
$\E\abs{(G_{2n}^{\mathrm{sym}})_{ii}}^2=2$.
Put
\begin{equation}
 H_n^{\mathrm{sym}}:=\haf(G_{2n}^{\mathrm{sym}}),
 \qquad
 (\sigma_n^{\mathrm{sym}})^2:=(2n-1)!!.
 \label{eq:symmetric-gaussian-scale}
\end{equation}
This scale is the exact second moment. In the perfect-matching expansion,
each monomial has mean zero, and the cross moment of two distinct matchings
vanishes because an edge appears in only one of them. Each matching has
unit squared moment. There are $(2n-1)!!$ matchings, so
$\E H_n^{\rm sym}=0$ and
$\E|H_n^{\rm sym}|^2=(2n-1)!!$.
The diagonal convention is immaterial. This direct calculation avoids
inferring moment convergence from weak convergence alone.

\begin{theorem}[Complex symmetric Gaussian anticoncentration]
\label{thm:symmetric-gaussian-anticoncentration}
For every $n\ge1$, $z\in\C$, and $\varepsilon\ge0$,
\begin{equation}
 \Pp\!\left\{
   \abs{H_n^{\mathrm{sym}}-z}
   \le \varepsilon\sigma_n^{\mathrm{sym}}
 \right\}
 \le \min\{1,b_n\varepsilon^2\}.
 \label{eq:symmetric-gaussian-small-ball}
\end{equation}
The coefficient $b_n$ is the fixed $n$ limit of the finite coefficient
$B_{k,n}$ in \cref{thm:main-finite}.
\end{theorem}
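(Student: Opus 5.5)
The plan is to obtain \cref{thm:symmetric-gaussian-anticoncentration} as the $k\to\infty$ limit of \cref{thm:main-finite}, as the abstract indicates. Fix $n$, and put $G_k:=X^{\T}X/\sqrt{k}$ with $X\in\C^{k\times2n}$ standard Gaussian. Then $G_k$ is a normalized sum of $k$ i.i.d.\ complex symmetric matrices $x_r x_r^{\T}$, where $x_r$ is the $r$-th row of $X$.

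\emph{Covariances.} For $i<j$ we have $\E|x_ix_j|^2=1$. On the diagonal, $\E|x_i^2|^2=\E|x_i|^4=2$. All pseudo-covariances vanish, since $\E[x_i x_j x_{i'} x_{j'}]=0$ for circular entries. Distinct unordered pairs are uncorrelated.

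\emph{Convergence in law.} The multivariate central limit theorem now gives
\[
G_k\Rightarrow G_{2n}^{\mathrm{sym}}.
\]
This limit matches the normalization of $G_{2n}^{\mathrm{sym}}$ exactly, including the diagonal variance two, and the diagonal convention is also stated to be immaterial. The hafnian is a polynomial, hence continuous, and it is homogeneous of degree $n$ in the matrix entries. Therefore
\[
k^{-n/2}H_{k,n}=\haf(G_k)\Rightarrow H_n^{\mathrm{sym}}.
\]

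\emph{Scales.} By \eqref{eq:sigma-main},
\[
k^{-n}\sigma_{k,n}^2=(2n-1)!!\prod_{q=0}^{n-1}(1+2q/k)\to(2n-1)!!=(\sigma_n^{\mathrm{sym}})^2 .
\]

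\emph{Passing to the limit.} Fix $z$ and $\varepsilon>0$, and take any $\varepsilon'>\varepsilon$. The closed disk $D=\{w:|w-z|\le\varepsilon\sigma_n^{\mathrm{sym}}\}$ is contained in the open disk $U=\{|w-z|<\varepsilon'\sigma_n^{\mathrm{sym}}\}$. By the scale convergence, for large $k$ the set $U$ lies inside the disk of radius $\varepsilon'\sigma_{k,n}k^{-n/2}$ about $z$. Rescaling the center to $zk^{n/2}$ and applying \cref{thm:main-finite} (valid once $k\ge4n$) gives
\[
\Pp\{\haf(G_k)\in U\}\le \min\{1,B_{k,n}\varepsilon'^2\}.
\]
The portmanteau theorem for open sets then yields
\[
\Pp\{H_n^{\mathrm{sym}}\in D\}\le\Pp\{H_n^{\mathrm{sym}}\in U\}\le\liminf_k\Pp\{\haf(G_k)\in U\}\le\min\{1,b_n\varepsilon'^2\},
\]
where the last step uses $B_{k,n}\to b_n$ from \cref{thm:main-finite}. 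Letting $\varepsilon'\downarrow\varepsilon$ gives \eqref{eq:symmetric-gaussian-small-ball}. For $\varepsilon=0$, the same argument applied with arbitrary $\varepsilon'>0$ shows the point mass at $z$ is zero, so the bound holds there too. The center-uniformity of the finite theorem is essential here, since the rescaled center $zk^{n/2}$ moves with $k$.

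\emph{Main obstacle.} The step needing care is the identification of the Gaussian limit: the covariance of the real and imaginary parts must match that of $G_{2n}^{\mathrm{sym}}$. Both ensembles are circular, so the pseudo-covariances vanish, and the only data to check are the variances one off the diagonal and two on it, together with the absence of correlation between distinct pairs. Both facts follow from fourth moments of independent circular Gaussians. Nothing else requires moment convergence, because the limiting scale $(2n-1)!!$ was computed directly. The sole use of the finite-$k$ variance is the elementary product limit above.
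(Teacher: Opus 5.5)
Your proposal is correct and follows the paper's proof essentially step for step. You use the same row-wise central limit theorem with the same Wick covariance check, continuous mapping via degree-$n$ homogeneity, the exact scale limit, $B_{k,n}\to b_n$, and the open-set Portmanteau argument with an enlarged radius; the only cosmetic difference is that you absorb the scale mismatch by a disk inclusion instead of normalizing by $\sigma_{k,n}$ and invoking Slutsky. That inclusion holds for every $k$ because $k^{-n}\sigma_{k,n}^2=(2n-1)!!\prod_{q}(1+2q/k)\ge(\sigma_n^{\mathrm{sym}})^2$; convergence of the scales alone would not give it at the exact radius $\varepsilon'\sigma_n^{\mathrm{sym}}$.
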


The coefficient satisfies $b_n\le 2\sqrt{n/\pi}$; see
Appendix~\ref{app:second-moment}.

\begin{proof}
The fixed degree limit proof is given in
Appendix~\ref{app:direct-symmetric}.
\end{proof}

\begin{corollary}[Independent additive perturbations]
\label[corollary]{cor:symmetric-gaussian-perturbation}
Let $\Delta$ be any random complex symmetric $2n\times2n$ matrix independent of
$G_{2n}^{\mathrm{sym}}$.  No boundedness or moment assumption is imposed on
$\Delta$.  Then, for every $z\in\C$ and $\varepsilon\ge0$,
\begin{equation}
 \Pp\!\left\{
  \abs{\haf(G_{2n}^{\mathrm{sym}}+\Delta)-z}
  \le\varepsilon\sigma_n^{\mathrm{sym}}
 \right\}
 \le\min\{1,b_n\varepsilon^2\}.
 \label{eq:symmetric-gaussian-perturbation-bound}
\end{equation}
\end{corollary}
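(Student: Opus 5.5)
The plan is to condition on $\Delta$ and reduce to a statement about $G_{2n}^{\mathrm{sym}}$ with a deterministic symmetric shift $D$. Because $\Delta$ is independent of $G_{2n}^{\mathrm{sym}}$, Fubini gives
\begin{equation*}
 \Pp\{|\haf(G_{2n}^{\mathrm{sym}}+\Delta)-z|\le\varepsilon\sigma_n^{\mathrm{sym}}\}
 =\E\bigl[\phi(\Delta)\bigr],
\end{equation*}
where
\begin{equation*}
 \phi(D)=\Pp\{|\haf(G_{2n}^{\mathrm{sym}}+D)-z|\le\varepsilon\sigma_n^{\mathrm{sym}}\}.
\end{equation*}
The map $\phi$ is jointly measurable in $D$, because the event is a Borel set in the pair $(G,D)$. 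It therefore suffices to prove the bound $\phi(D)\le\min\{1,b_n\varepsilon^2\}$ for every fixed complex symmetric $D$, uniformly in $z$. No moment assumption on $\Delta$ enters, since only the pointwise bound on $\phi$ is averaged.

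For fixed $D$, I would repeat the route that proves \cref{thm:symmetric-gaussian-anticoncentration}, but with a shifted factor.
\begin{itemize}
\item Realize $G_{2n}^{\mathrm{sym}}$ in distribution as the fixed-$n$, $k\to\infty$ limit of the normalized transpose-Gram matrix $k^{-1/2}(X^{\T}X-\mathrm{diag\ centering})$. In fact $X^{\T}X$ has mean zero, so no centering is needed: $\E X^{\T}X=0$ for circular Gaussians.
\item Off-diagonal entries then have variance one and diagonal entries have variance two.
\item Write $D=k^{-1/2}(Y_k^{\T}Y_k)$-type perturbations via \cref{cor:independent-matrix-shift}. Concretely, choose deterministic $\Delta_k\in\C^{k\times 2n}$ so that $k^{-1/2}\bigl((X+\Delta_k)^{\T}(X+\Delta_k)\bigr)$ converges in law to $G_{2n}^{\mathrm{sym}}+D$.
\end{itemize}
A natural choice uses Takagi factorization. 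Write $D=U\Sigma U^{\T}$ and let $\Delta_k$ have first rows $k^{1/4}\Sigma^{1/2}U^{\T}$ and zeros elsewhere. Then
\begin{equation*}
 k^{-1/2}\Delta_k^{\T}\Delta_k=D,
\end{equation*}
while the cross terms $k^{-1/2}(X^{\T}\Delta_k+\Delta_k^{\T}X)$ have variance $O(k^{-1/2})$ and vanish in probability. The homogeneity $\haf(cB)=c^n\haf(B)$ converts the scale: $\sigma_{k,n}/k^{n/2}\to\sigma_n^{\mathrm{sym}}$ by \eqref{eq:sigma-main}. \cref{cor:independent-matrix-shift} therefore gives, for each $k\ge4n$ and all $w$,
\begin{equation*}
 \Pp\{|k^{-n/2}\widetilde H_{k,n}-w|\le\varepsilon\,\sigma_{k,n}k^{-n/2}\}\le B_{k,n}\varepsilon^2 .
\end{equation*}

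To pass to the limit, apply the portmanteau theorem to the closed disk. This is where care is needed, because portmanteau gives an upper bound for closed sets only with a limsup, and here it goes in the favorable direction. The hafnian is continuous, so $k^{-n/2}\widetilde H_{k,n}\to\haf(G_{2n}^{\mathrm{sym}}+D)$ in law. For any $\varepsilon'>\varepsilon$, the open disk of radius $\varepsilon'\sigma_n^{\mathrm{sym}}$ eventually lies inside the closed disk of radius $\varepsilon'' \sigma_{k,n}k^{-n/2}$ for $\varepsilon''$ slightly larger. Using $\liminf$ on open sets,
\begin{equation*}
 \Pp\{|\haf(G+D)-z|<\varepsilon'\sigma_n^{\mathrm{sym}}\}\le\lim_k B_{k,n}\varepsilon''^2=b_n\varepsilon''^2 ,
\end{equation*}
with the limit taken from \cref{thm:main-finite}. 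Letting $\varepsilon''\downarrow\varepsilon'\downarrow\varepsilon$ yields $\phi(D)\le b_n\varepsilon^2$, and the bound by $1$ is trivial.

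The main obstacle is making the Takagi embedding and the convergence of the cross terms rigorous, in particular at the diagonal variance-two normalization. If this proves delicate, I would instead argue directly, as the appendix presumably does for \cref{thm:symmetric-gaussian-anticoncentration}. That argument conditions on all entries except one row/column pair. The hafnian is then affine in the Gaussian entries $g_{i,2n}$, with coefficients given by hafnian cofactors of the shifted matrix, and it is a circular Gaussian whose variance is the cofactor energy. A deterministic shift only moves the mean. Hence the small-ball bound reduces to an inverse-variance estimate, which is insensitive to centering after the same Fourier compression.
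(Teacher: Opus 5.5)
Your main argument is correct, but it takes a different route from the paper. The paper stays inside the symmetric ensemble. After fixing $\Delta=D$ and discarding diagonal entries, it uses that the hafnian is complex linear in the Gaussian edges of each vertex star. It then applies the circular Gaussian disk-centering (Anderson) inequality star by star, removing both the shift and the center. This gives $\Pp\{|\haf(G_{2n}^{\mathrm{sym}}+D)-z|\le\rho\}\le\Pp\{|\haf(G_{2n}^{\mathrm{sym}})|\le\rho\}$ for every $\rho\ge0$, and Theorem~\ref{thm:symmetric-gaussian-anticoncentration} at center zero finishes.

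You instead lift $D$ to a rectangular shift: $D=Y^{\T}Y$ by Takagi, with $\Delta_k=k^{1/4}Y$ padded by zero rows. You invoke Corollary~\ref{cor:independent-matrix-shift} at each $k\ge4n$ with center $k^{n/2}z$. You then repeat the fixed-$n$ limit used for the unperturbed theorem: the CLT for $k^{-1/2}X^{\T}X$, Slutsky for the $O_p(k^{-1/4})$ cross terms, continuity and homogeneity of the hafnian, the open-set Portmanteau inequality, and $B_{k,n}\to b_n$. All of this checks out. The diagonal variance-two point you flag is automatic from the CLT and irrelevant to the hafnian anyway. Since $k^{-n}\sigma_{k,n}^2=(2n-1)!!\prod_q(1+2q/k)\ge(\sigma_n^{\mathrm{sym}})^2$, you do not even need the auxiliary $\varepsilon''$.

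Your route buys reuse: all centering is delegated to the column-wise argument already proved for the rectangular factor. The paper's route is limit-free and yields a radius-by-radius domination by the centered unperturbed hafnian. That comparison would transfer any small-ball bound for $G_{2n}^{\mathrm{sym}}$, not only the limit of $B_{k,n}\varepsilon^2$.

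Two minor remarks. First, the closed-set half of Portmanteau, $\limsup_k\Pp\{U_k\in F\}\le\Pp\{U\in F\}$ for $U_k\Rightarrow U$, runs in the unfavorable direction for an upper bound on the limit. It is the open-set $\liminf$ inequality that works, and you do end up using it. Second, your fallback paragraph would not go through as written. Conditioning outside one star gives a Gaussian whose variance is the cofactor energy of the \emph{shifted} matrix, which depends on $D$. No Fourier-compression bound in the paper controls its inverse moment uniformly in $D$. The working direct argument uses radial monotonicity to remove the shift, not an inverse-variance estimate. Since your main route is complete, the fallback is not needed.
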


\begin{proof}
See the proof of \cref{cor:symmetric-gaussian-perturbation} in
Appendix~\ref{app:direct-symmetric}.
\end{proof}

The finite perturbation result, \cref{cor:independent-matrix-shift}, acts on
the rectangular factor before the transpose-gram map and is proved in
Appendix~\ref{app:direct-symmetric}.  Its normalized coefficient is the same $B_{k,n}$ as in the
unperturbed theorem. The proof uses direct Gaussian disk centering for each
column. The separate result
\cref{cor:symmetric-gaussian-perturbation} adds $\Delta$ directly to the
complex symmetric matrix and has the sharper coefficient $b_n$.

The symmetric Gaussian theorem is obtained by sending the row dimension to
infinity in \cref{thm:main-finite}.  Its proof uses the finite dimensional
matrix central limit theorem, continuity and homogeneity of the hafnian, and
a normalized Portmanteau argument.  The perturbation corollary is proved
directly from the independent off diagonal Gaussian entries.  Diagonal
entries never appear in a hafnian monomial, and successive centering of the
Gaussian edge stars gives the perturbation bound without a boundedness or
moment assumption.  The full arguments are given in
Appendix~\ref{app:direct-symmetric}.

The radial and angular decomposition in \cref{lem:radial-angular-factorization}
also exposes a distinct negative result: \cref{thm:wick-angular-obstruction}
proves an exponential Wick lower bound for the angular factor when
$n\ge1000$ and $2\le k\le n/3$.

The exact product form makes the useful parameter window in finite dimensions
transparent.
{The normalized penalty $B_{k,n}/b_n$ in finite dimensions is
polynomial when $n^2/k=O(\log n)$, remains of constant order when
$k\asymp n^2$, and tends to one when $k$ is superquadratic.  At the edge
$k=4n$, $B_{k,n}/b_n$ is exponential, so the condition $k\ge4n$ alone is not
a claim of an efficient complexity regime.}

Section~\ref{sec:conditioning-consequences} derives additive-to-relative
probability transfer and negative fractional moments. These use the local
lower tail directly. The construction and running time of an additive
estimator remain separate assumptions in any complexity reduction.

\begin{corollary}[Polynomial lower tails for the Gaussian hafnian]
\label{cor:historical-lower-tail}
For every $n\ge1$, $0<\delta<1$, and $z\in\C$,
\begin{equation}
 \Pp\!\left\{ |H_n^{\rm sym}-z|
    \le\frac{\delta}{2n}\sqrt{(2n-1)!!}\right\}\le\delta.
 \label{eq:historical-polynomial-bound}
\end{equation}
Thus the polynomial $q(n,1/\delta)=2n/\delta$ controls the inverse
relative amplitude scale in the Gaussian lower-tail requirement.
\end{corollary}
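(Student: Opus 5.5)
This corollary should follow directly from \cref{thm:symmetric-gaussian-anticoncentration} by choosing $\varepsilon$ and then bounding $b_n$. Fix $n\ge1$, $0<\delta<1$ and $z\in\C$, and apply \eqref{eq:symmetric-gaussian-small-ball} with $\varepsilon=\delta/(2n)$. Since $\sigma_n^{\mathrm{sym}}=\sqrt{(2n-1)!!}$, the disk in \eqref{eq:historical-polynomial-bound} is exactly the disk $\{|H_n^{\rm sym}-z|\le\varepsilon\sigma_n^{\mathrm{sym}}\}$. The theorem therefore bounds its probability by $b_n\varepsilon^2=b_n\delta^2/(4n^2)$.

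It then suffices to show $b_n\delta^2/(4n^2)\le\delta$, i.e.\ $b_n\delta\le 4n^2$. Because $\delta<1$, the inequality $b_n\le4n^2$ is enough. I will use the bound $b_n\le2\sqrt{n/\pi}$ recorded after \cref{thm:symmetric-gaussian-anticoncentration}, which is proved in Appendix~\ref{app:second-moment}. Since $2\sqrt{n/\pi}\le 2\sqrt n\le 4n^2$ for all $n\ge1$, this gives the bound with a large margin.

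If a self-contained check of $b_n$ is preferred, I would argue as follows. Gautschi's inequality $\Gamma(n+\tfrac12)/\Gamma(n)\le\sqrt n$ gives $b_n\le 2\sqrt n/\sqrt\pi$ directly. Alternatively, the ratio $b_{n+1}/b_n=(n+\tfrac12)/n$ together with $b_1=1$ gives $b_n=\prod_{j=1}^{n-1}(1+\tfrac1{2j})\le n$ by induction.

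None of these steps is a real obstacle. The only points to watch are the correct identification of $\sigma_n^{\rm sym}$ and the use of $\delta<1$. The final sentence of the corollary then follows by reading off the radius scale $\sqrt{(2n-1)!!}/q(n,1/\delta)$ with $q=2n/\delta$.
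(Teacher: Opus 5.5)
Your proposal is correct and matches the paper's proof: apply Theorem~\ref{thm:symmetric-gaussian-anticoncentration} with $\varepsilon=\delta/(2n)$, then bound $b_n$ using $b_n\le2\sqrt{n/\pi}$. The paper uses the tighter intermediate bound $b_n\le2n$, which gives $b_n\varepsilon^2\le\delta^2/(2n)\le\delta$; your cruder bound $b_n\le4n^2$ together with $\delta<1$ works just as well, and your two self-contained checks of $b_n$ (Gautschi's inequality, or the product formula giving $b_n\le n$) are both valid.
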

\begin{proof}
Apply Theorem~\ref{thm:symmetric-gaussian-anticoncentration} with
$\varepsilon=\delta/(2n)$. Since $b_n\le2\sqrt{n/\pi}\le2n$,
$b_n\varepsilon^2\le\delta^2/(2n)\le\delta$.
\end{proof}

At $z=0$, Eq.~\eqref{eq:historical-polynomial-bound} is the analytic
Gaussian hafnian counterpart of Aaronson and Arkhipov's Permanent
Anti-Concentration Conjecture
\cite[Conjecture~1.6, Eq.~(1.3)]{AaronsonArkhipov2013}: the amplitude
exceeds its root mean square divided by a polynomial with probability
greater than $1-\delta$. Here $\sqrt{n!}$ is replaced by
$\sqrt{(2n-1)!!}$, and $q(n,1/\delta)=2n/\delta$ works; the proof gives
the strict bound $\delta^2/(2n)<\delta$.
This proves the Gaussian lower-tail property anticipated in the GBS
proposals \cite[``Approximate GBS'']{HamiltonEtAl2017GBS}
and \cite[Secs.~V.A--V.B]{KruseEtAl2019GBS}.
Their broader conjecture relating additive squared-hafnian estimation
to multiplicative complex-hafnian estimation also requires a computational
reduction; phase recovery and average-case hardness are not established
by this probability bound.

\begin{corollary}[Polynomial small balls]
\label[corollary]{cor:polynomial}
Fix $A,D>0$.  Suppose an integer sequence $k_n$ is eventually positive and
\begin{equation}
 \frac{n^2}{k_n}\le D\log n
 \quad\text{for all sufficiently large }n.
 \label{eq:poly-regime}
\end{equation}
For every fixed
\begin{equation}
 \gamma>\frac{A+3D+1/2}{2},
 \label{eq:radius-exponent-choice}
\end{equation}
one has, for all sufficiently large $n$ and every $z\in\C$,
\begin{equation}
 \Pp\{|H_{k_n,n}-z|\le n^{-\gamma}\sigma_{k_n,n}\}
 \le n^{-A}.
 \label{eq:poly-cor}
\end{equation}
\end{corollary}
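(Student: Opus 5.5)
The plan is to apply \cref{thm:main-finite} with $k=k_n$ and $\varepsilon=n^{-\gamma}$, and then bound the coefficient $B_{k_n,n}$ by a power of $n$ using \eqref{eq:B-simple}. Two preliminary facts are needed. First, \eqref{eq:poly-regime} gives $k_n\ge n^2/(D\log n)$ for large $n$, and this exceeds $8n$ eventually, so the hypothesis $k_n\ge4n$ of \cref{thm:main-finite} holds. Second, $k_n-4n+1\ge k_n/2$ eventually. Hence, for all sufficiently large $n$ and every $z\in\C$,
\begin{equation*}
 \Pp\{|H_{k_n,n}-z|\le n^{-\gamma}\sigma_{k_n,n}\}\le B_{k_n,n}\,n^{-2\gamma}.
\end{equation*}

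Next we bound the pieces of \eqref{eq:B-simple}. For the prefactor, $b_n\le2\sqrt{n/\pi}\le n^{1/2}$ for $n\ge2$, as noted after \cref{thm:symmetric-gaussian-anticoncentration}. For the exponent, $(3n^2-2)/k_n\le 3n^2/k_n\le 3D\log n$. The cubic term satisfies
\begin{equation*}
 \frac{9n^3}{k_n(k_n-4n+1)}\le\frac{18n^3}{k_n^2}\le\frac{18D^2(\log n)^2}{n},
\end{equation*}
which tends to $0$, so it is at most some $\eta>0$ eventually. Combining these,
\begin{equation*}
 B_{k_n,n}\le e^{\eta}\,n^{1/2+3D}.
\end{equation*}

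To finish, put $\kappa:=2\gamma-A-3D-1/2$, which is positive by \eqref{eq:radius-exponent-choice}. Then
\begin{equation*}
 B_{k_n,n}n^{-2\gamma}\le e^{\eta}n^{-A-\kappa}\le n^{-A}
\end{equation*}
once $n^{\kappa}\ge e^{\eta}$. The bound is uniform in $z$ because \cref{thm:main-finite} already is.

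The main point to check is that the exponent of $B_{k_n,n}$ is absorbed into a polynomial with power exactly $3D$ plus a vanishing error. The strict inequality in $\gamma$ supplies the slack that absorbs the constants $e^{\eta}$ and $2/\sqrt\pi$, and the lower-order term in \eqref{eq:B-simple} needs $k_n\gg n^{3/2}$, which follows from $k_n\gtrsim n^2/\log n$. Everything else is a routine check of "sufficiently large $n$".
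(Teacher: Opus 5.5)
Your proof is correct and follows essentially the paper's route: apply \cref{thm:main-finite} at $\varepsilon=n^{-\gamma}$, bound $B_{k_n,n}$ polynomially, and let the strict inequality on $\gamma$ absorb constants. The paper bounds $B_{k_n,n}$ through the two-sided remainder \eqref{eq:B-bridge} instead of \eqref{eq:B-simple}, obtaining $B_{k_n,n}\le 2e\,n^{3D+1/2}$.

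One slip needs correcting: $2\sqrt{n/\pi}\le n^{1/2}$ is false, because $2/\sqrt\pi>1$ (for instance $b_2=3/2>\sqrt2$). You should carry the factor into $B_{k_n,n}\le(2/\sqrt\pi)\,e^{\eta}n^{1/2+3D}$. Your slack $n^{\kappa}$ then absorbs it, exactly as your closing remark says.
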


\begin{proof}
See Appendix~\ref{app:second-moment}, Sec.~\ref{sec:polynomial-regime}.
\end{proof}

\begin{corollary}[Finite-Haar consequence with the companion theorem]
\label{cor:finite-haar-companion}
Let $U$ be Haar in $\mathrm U(M)$, $N=2n$, $4n\le K\le M$, and let
$S$ be a fixed set of $N$ output modes. Then
\begin{align}
 &\Pp_U\!\left\{\left|\haf\!\left(MU_{S,[K]}U_{S,[K]}^{\T}\right)-z\right|
                  \le\varepsilon\sigma_{K,n}\right\}
 \nonumber\\
 &\qquad\le\min\{1,B_{K,n}\varepsilon^2+C_\ast N^2/M\},
 \label{eq:finite-haar-companion}
\end{align}
for every $z\in\C$ and $\varepsilon\ge0$, where $C_\ast=615172$.
\end{corollary}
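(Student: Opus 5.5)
The plan is to compose \cref{thm:main-finite} with the uniform hiding theorem of the companion paper~\cite{ZhaoUniformHiding2026}, using a total-variation transfer.

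First, I would make the Gaussian correspondence explicit. Write $A:=\sqrt{M}\,U_{S,[K]}\in\C^{N\times K}$. Then $MU_{S,[K]}U_{S,[K]}^{\T}=AA^{\T}$. With $X:=A^{\T}\in\C^{K\times2n}$ this equals $X^{\T}X$ in the orientation of Eq.~\eqref{eq:H-definition-intro}, with $k=K$. The relevant event is
\begin{equation}
E_z:=\{Y\in\C^{K\times2n}:\ |\haf(Y^{\T}Y)-z|\le\varepsilon\sigma_{K,n}\},
\end{equation}
a Borel subset of matrix space that depends only on the matrix argument. Both sides of Eq.~\eqref{eq:finite-haar-companion} refer to this one set, evaluated under two different laws.

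Second, I would invoke the companion hiding theorem in the form I expect it to take. For $U$ Haar on $\mathrm U(M)$ and any fixed $N\times K$ block with $N=2n$ and $K\le M$, the law of $\sqrt M\,U_{S,[K]}$ should be within total variation distance $C_\ast N^2/M$ of the law of an $N\times K$ matrix of i.i.d.\ standard circular complex Gaussians, with $C_\ast=615172$. Transposition is a bijection and the standard Gaussian law is invariant under it, so the same bound holds between $A^{\T}$ and a standard Gaussian $X\in\C^{K\times2n}$. I expect the main obstacle to be this matching step. I must check that the companion statement bounds the total variation of the full rectangular block, uniformly in $K\le M$, and not only of the hafnian or of an $N\times N$ block. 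I also need its normalization ($\sqrt M$ versus $\sqrt{M}$ times a phase, and unit complex variance) to agree with the standard circular normalization used for $\sigma_{K,n}$ in Eq.~\eqref{eq:sigma-main}. If the companion bound is stated for the block in the other orientation, the transposition remark above closes the gap.

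Third, I would combine the two ingredients. By the definition of total variation,
\begin{equation}
\Pp_U\{A^{\T}\in E_z\}\le \Pp\{X\in E_z\}+C_\ast N^2/M.
\end{equation}
\cref{thm:main-finite} applies because $k=K\ge4n$. Since it is uniform in $z$, it gives $\Pp\{X\in E_z\}\le\min\{1,B_{K,n}\varepsilon^2\}\le B_{K,n}\varepsilon^2$. The left-hand side is a probability, so it is also at most $1$. Taking the minimum with $1$ yields Eq.~\eqref{eq:finite-haar-companion} for every $z\in\C$ and $\varepsilon\ge0$.

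No further use of the Gaussian proof machinery is needed. The corollary is purely a composition of the two results, so all of the real content sits in verifying that the companion theorem is applicable as stated.
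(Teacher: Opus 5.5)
Your overall architecture (a total-variation transfer of one Borel event, followed by Theorem~\ref{thm:main-finite} at $k=K$) matches the paper's. However, the hiding input you postulate is false, so your second step fails as written.

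You assume that the law of the full rectangular block $\sqrt M\,U_{S,[K]}$ is within total variation $C_\ast N^2/M$ of an i.i.d.\ standard Gaussian $N\times K$ block, uniformly in $K\le M$. Take $K=M$, which the hypotheses $4n\le K\le M$ allow. The rows of $U_{S,[M]}$ are then orthonormal, so $AA^*=MI_N$ holds identically. For a Gaussian $G\in\C^{N\times M}$, the event $GG^*=MI_N$ has probability zero. The total variation distance is therefore $1$, while $C_\ast N^2/M\to0$ for fixed $N$ as $M\to\infty$. A rate depending only on $N^2/M$, with no dependence on $K$, cannot hold for the rectangular block, because the column dimension $K$ is visible there. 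The orientation and normalization issues you flagged are not the problem; the problem is which object is being compared.

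What the companion supplies, and what the paper's proof uses, is a \emph{product}-hiding theorem. It compares the laws of the $N\times N$ complex symmetric matrices $MU_{S,[K]}U_{S,[K]}^{\T}$ and $GG^{\T}$, with $G\in\C^{N\times K}$ Gaussian, at total variation at most $\min\{1,C_\ast N^2/M\}$. This is weaker than a block bound: a block bound would imply it by contraction of total variation under $Y\mapsto YY^{\T}$, but not conversely. It is nevertheless all you need, because your event depends on the matrix only through its transpose-Gram product.

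Replace $E_z$ by the closed set $D_z=\{B=B^{\T}:\ |\haf(B)-z|\le\varepsilon\sigma_{K,n}\}$ of complex symmetric matrices. This gives $\Pp_U\{MU_{S,[K]}U_{S,[K]}^{\T}\in D_z\}\le\Pp\{GG^{\T}\in D_z\}+C_\ast N^2/M$. Then note that $GG^{\T}=X^{\T}X$ with $X=G^{\T}$ standard Gaussian in $\C^{K\times 2n}$. Your third step then goes through unchanged.
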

\begin{proof}
The companion's uniform product-hiding theorem compares
$MU_{S,[K]}U_{S,[K]}^{\T}$ with $GG^{\T}$ for
$G\in\C^{N\times K}$ at total variation distance at most
$\min\{1,C_\ast N^2/M\}$ \cite{ZhaoUniformHiding2026}.
Apply it to the inverse image of the disk under the hafnian, and then use
Theorem~\ref{thm:main-finite} with $X=G^{\T}$.
\end{proof}

For $z=0$, this is a lower-tail bound for the ideal finite-interferometer
GBS probability after inserting the usual squeezing prefactor.
The companion derives the common-threshold comparison with the
independent symmetric route, including the different probability
normalizations. This application uses hiding; the proofs of our Gaussian
main theorems do not.

\subsection{Proof strategy}

After conditioning on the last Gaussian column, the proof alternates
Fourier cofactor compression with a preserved-coordinate Wishart inverse
bound. The conditional Gaussian mixture then yields the positive density
and exact local power.

Writing $W_r$ and $V_r$ for the cofactor energies defined in
\cref{sec:preliminaries}, the two inverse moment steps are

\begin{align}
 \mathcal W_{k,r}&\le \frac{1}{2r-2}\,\mathcal V_{k,r-1},
 \label{eq:fourier-overview}\\
 \mathcal V_{k,r}&\le \frac{1}{k-4r+1}\,\mathcal W_{k,r}.
 \label{eq:wishart-overview}
\end{align}

The first is the Fourier compression estimate and the second is the
preserved coordinate Wishart estimate; their hypotheses and endpoint cases
are stated where they are proved.

%% file: sections/02_preliminaries.tex
\section{Gaussian hafnian anticoncentration}
\label{sec:preliminaries}

\subsection{Last column Gaussian mixture and exact scale}

A standard circular complex Gaussian $Z\sim\CN(0,1)$ has density
$\pi^{-1}e^{-\abs z^2}$ on $\C$; equivalently, its real and imaginary parts
are independent $N(0,1/2)$ variables.  More generally,
$Z\sim\CN(0,v)$ means
\[
  p_Z(z)=\frac1{\pi v}e^{-\abs z^2/v},
  \qquad \E\abs Z^2=v.
\]
This is the proper complex Gaussian normalization of
Ref.~\cite{Goodman1963}.
We write $G_a\sim\GammaLaw(a,1)$ for a Gamma random variable with shape $a$
and scale one.  Gamma factors displayed in a product are independent unless
stated otherwise.

We set $\haf(\varnothing)=1$.  For an odd number of Gaussian columns, let
\begin{equation}
  A_r=[x_1,\ldots,x_{2r-1}]
  \in\C^{k\times(2r-1)},
  \label{eq:Ar-definition}
\end{equation}
and define
\begin{equation}
  (C_r)_j
  :=\haf(A_{r,-j}^{\T}A_{r,-j}),
  \qquad 1\le j\le 2r-1,
  \label{eq:Cr-definition}
\end{equation}
where $A_{r,-j}$ is obtained by deleting column $j$.  We use the two energies
\begin{equation}
  W_r:=\norm{C_r}^2,
  \qquad
  V_r:=\norm{A_rC_r}^2.
  \label{eq:Wr-Vr-definition}
\end{equation}
At level $r=1$, this convention gives $C_1=(1)$, $W_1=1$, and
$V_1=\norm{x_1}^2$.

We interpret every inverse moment below as an extended nonnegative expectation.
If
$\mu_{k,r}$ is the product circular Gaussian law of the $2r-1$ past columns,
set
\begin{equation}
 \begin{aligned}
 \mathcal V_{k,r}&:=\int V_r(A)^{-1}\,\dd\mu_{k,r}(A),\\
 \mathcal W_{k,r}&:=\int W_r(A)^{-1}\,\dd\mu_{k,r}(A),\\
 \mathcal V_{k,r},\mathcal W_{k,r}&\in[0,\infty].
 \end{aligned}
 \label{eq:inverse-moment-definitions}
\end{equation}
The quantities $\mathcal V_{k,r}$ and $\mathcal W_{k,r}$ retain this extended
meaning until a finite upper bound has been proved.  We then use the ordinary
expectation notation $\E V_r^{-1}$ or $\E W_r^{-1}$ for the same value.

{For $r\ge2$, the vector $C_r$ is a polynomial function of the
transpose-gram matrix $S_r=A_r^{\T}A_r$, so each of its coordinates is a
holomorphic polynomial in the entries of $A_r$.  To verify that $(C_r)_1$ is
not identically zero for any $k\ge1$, assign every relevant column the same
real unit vector.  Every entry away from the diagonal of the resulting even
Gram submatrix equals one, and hence $(C_r)_1=(2r-3)!!\ne0$.  For $r=1$, we have
already set $C_1=(1)$.  Now identify the complex entries of $A_r$ with their
underlying real coordinates and define
$f(A_r):=\abs{(C_r)_1(A_r)}^2$.  This is a nonzero real valued real analytic
function, and its zero set is exactly the zero set of $(C_r)_1$.  By
Ref.~\cite{Mityagin2020ZeroSet}, that set is Lebesgue null.  The standard
complex Gaussian law is absolutely continuous with respect to Lebesgue
measure, so the same set is Gaussian null.  Consequently, $C_r\ne0$ almost
surely.  Whenever $k\ge2r-1$, a fixed
maximal minor of $A_r$ is a nonzero polynomial, so $A_r$ has full column rank
almost surely.  In particular, under the main theorem hypothesis $k\ge4n$,
this holds at every level $r\le n$ and, since $C_r\ne0$ almost surely, gives
$V_r=\norm{A_rC_r}^2>0$ almost surely.}

\subsubsection{Expansion along the last column}

Conditioning a multilinear Gaussian polynomial on its last block is standard.
Here the paper specific difficulty is that the mixing variance
$V_r=\norm{A_rC_r}^2$ is coupled to a cofactor vector measurable in the same
transpose-gram data.  The following reduction exposes a scalar
complex Gaussian while retaining that coupling.

\begin{proposition}[Fixed past last column law]
\label[proposition]{prop:conditional-mixture}
For $r\ge1$, let $x_{2r}$ be an independent standard complex Gaussian column
and put
\[
  H_{k,r}=\haf([A_r\;x_{2r}]^{\T}[A_r\;x_{2r}]).
\]
For every fixed past matrix $A_r$, put $y(A_r):=A_rC_r$,
$T_{A_r}(x):=x^{\T}y(A_r)$, and
$S_{A_r}(z):=\sqrt{V_r(A_r)}\,z$.  Then
\begin{equation}
 \begin{aligned}
 H_{k,r}(A_r,x)&=T_{A_r}(x),\\
 (T_{A_r})_{\#}\CN(0,I_k)&=(S_{A_r})_{\#}\CN(0,1).
 \end{aligned}
 \label{eq:last-column-map-law}
\end{equation}
{Equivalently, conditional on the fixed past matrix $A_r$,
\begin{equation}
 H_{k,r}\mid A_r\sim\CN\!\left(0,V_r(A_r)\right).
 \label{eq:conditional-law-given-past}
\end{equation}
}
\end{proposition}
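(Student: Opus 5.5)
The plan has two parts: an algebraic identity expressing the hafnian as a linear form in the last column, and a rotation-invariance computation for the law of that form.

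\emph{Algebraic step.} Write $B=[A_r\;x_{2r}]^{\T}[A_r\;x_{2r}]$, a complex symmetric $2r\times 2r$ matrix. Index $2r$ is matched to exactly one partner $j\in\{1,\ldots,2r-1\}$ in every perfect matching. Grouping matchings by $j$ in \eqref{eq:haf-definition} gives the row expansion
\[
 \haf(B)=\sum_{j=1}^{2r-1} b_{j,2r}\,\haf(B_{-\{j,2r\}}).
\]
For each $j$:
\begin{itemize}
\item $B_{-\{j,2r\}}=A_{r,-j}^{\T}A_{r,-j}$, so $\haf(B_{-\{j,2r\}})=(C_r)_j$ by \eqref{eq:Cr-definition};
\item $b_{j,2r}=x_j^{\T}x_{2r}$.
\end{itemize}
Substituting,
\[
 H_{k,r}=\sum_j (x_j^{\T}x_{2r})(C_r)_j=x_{2r}^{\T}\sum_j (C_r)_j x_j=x_{2r}^{\T}A_rC_r=T_{A_r}(x_{2r}).
\]
The diagonal entry $b_{2r,2r}=x_{2r}^{\T}x_{2r}$ never appears, since hafnian monomials use only off-diagonal entries. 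This is what makes the map linear rather than quadratic in $x_{2r}$.

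For $r=1$ the convention $C_1=(1)$ with $\haf(\varnothing)=1$ gives $H_{k,1}=x_1^{\T}x_2$, which is consistent.

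\emph{Law of the linear form.} Fix $A_r$ and set $y=y(A_r)\in\C^k$.
\begin{itemize}
\item If $y=0$, then $T_{A_r}\equiv0$ and $V_r=0$, so both pushforwards are $\delta_0$.
\item Otherwise, $x\mapsto x^{\T}y=\sum_i y_ix_i$ is a complex-linear functional of $x\sim\CN(0,I_k)$. The $y_ix_i$ are independent with laws $\CN(0,|y_i|^2)$, by rotation invariance of the circular law. Sums of independent proper circular Gaussians are circular, and the variances add. Hence $x^{\T}y\sim\CN(0,\norm{y}^2)$, and $\norm{y}^2=\norm{A_rC_r}^2=V_r(A_r)$.
\end{itemize}
Equivalently, the pseudo-covariance $\E(x^{\T}y)^2=\sum_i y_i^2\E x_i^2=0$ vanishes, so the result is a proper complex Gaussian. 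Its variance is $\sum_i|y_i|^2$. This equals the pushforward of $\CN(0,1)$ under $z\mapsto\sqrt{V_r}\,z$, which is the second line of \eqref{eq:last-column-map-law}.

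\emph{Conditional form.} Because $x_{2r}$ is independent of $A_r$ and everything above is measurable in $A_r$, Fubini and the disintegration of the product law give \eqref{eq:conditional-law-given-past}.

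The main obstacle is keeping the bookkeeping honest: checking that the deleted submatrix really is the transpose-gram of $A_{r,-j}$, and treating the transpose (not the adjoint) correctly. The rest is routine. In particular, the circularity argument uses only the unconjugated pairing $x^{\T}y$, and the vanishing of $\E x_i^2$ for circular entries is exactly what that pairing requires.
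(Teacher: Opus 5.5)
Your proposal is correct and follows the paper's proof. It expands the hafnian along the vertex of $x_{2r}$ to obtain $H_{k,r}=x_{2r}^{\T}A_rC_r$, then uses that the unconjugated linear form $x^{\T}y$ of $x\sim\CN(0,I_k)$ is $\CN(0,\norm{y}^2)$; your explicit treatment of the degenerate case $y=0$, where both pushforwards are $\delta_0$, fills in a detail the paper leaves implicit.
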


\begin{proof}
Expanding the hafnian along the vertex corresponding to $x_{2r}$ gives
\[
  H_{k,r}
  =\sum_{j=1}^{2r-1}(x_{2r}^{\T}x_j)(C_r)_j
  =x_{2r}^{\T}A_rC_r.
\]
For fixed $y\in\C^k$, the random variable $x_{2r}^{\T}y$ is proper complex
Gaussian with variance $\norm y^2$; the transpose rather than adjoint causes
no change in that variance.  {Taking $y=A_rC_r$ proves
\eqref{eq:last-column-map-law}, whose {conditional form on each
fixed fiber} is
\eqref{eq:conditional-law-given-past}.}
\end{proof}

The fixed past law turns a shifted planar small ball into an extended inverse
moment.  {In the range $k\ge2r-1$,} conditional on $A_r$, the
density of $H_{k,r}$ is bounded by
$(\pi V_r)^{-1}$.  A disk of radius $\rho$ has area $\pi\rho^2$, so, for any
center $z_0\in\C$ and radius $\rho\ge0$, almost sure positivity of $V_r$ gives
\begin{equation}
  \Pp\{\abs{H_{k,r}-z_0}\le\rho\}
  \le \rho^2\mathcal V_{k,r}
  \qquad\text{in }[0,\infty].
  \label{eq:disk-to-inverse-moment}
\end{equation}
after averaging over the past matrix.  Thus the proof of
\cref{thm:main-finite} reduces to controlling
$\mathcal V_{k,n}$ with its dependence on $n$ and $k$ explicit.

\subsubsection{The exact second moment}

{The exact hafnian second moment used for the normalization in
Eq.~\eqref{eq:sigma-main} was first proved by Ehrenberg et al.  They expand
the hafnian, represent the surviving Gaussian contractions by connected
components of two superimposed perfect matchings, and evaluate the resulting
weighted graph sum by induction
{\cite[Theorem~1 and Supplemental
Sec.~S3]{EhrenbergEtAl2025Transition}}.  Their later second moment paper
reviews this graph calculation
{\cite[Sec.~III.A]{EhrenbergEtAl2025SecondMoment}}.
Zhao later gave a shorter independent proof that compresses the matching
enumeration into a single auxiliary Gaussian moment
{\cite[Eq.~(3) and Supplemental
Eq.~(S9)]{ZhaoExactMoments2026}}.}

\begin{proposition}[Exact scale]
\label[proposition]{prop:exact-second-moment}
For all $k,n\ge1$,
\begin{equation}
  \E\abs{H_{k,n}}^2
  =(2n-1)!!\prod_{q=0}^{n-1}(k+2q).
  \label{eq:exact-second-moment}
\end{equation}
\end{proposition}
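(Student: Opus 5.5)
The plan is to linearize the hafnian with an auxiliary real Gaussian vector. Then the complex Gaussian average over $X$ factorizes column by column, and only a scalar Gaussian moment is left. Let $g\sim N(0,I_k)$ be a \emph{real} standard Gaussian vector, independent of $X$. For fixed $X$, put $\ell_j:=x_j^{\T}g$ for $1\le j\le 2n$. These are complex linear combinations of the jointly Gaussian real coordinates of $g$, and $\E[\ell_i\ell_j]=x_i^{\T}x_j$ (bilinear, no conjugation).

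Isserlis' theorem is multilinear in the linear forms, so it extends from real to complex coefficients. Summing the products of pair covariances over perfect matchings gives
\begin{equation*}
 \E_g\prod_{j=1}^{2n} x_j^{\T}g=\sum_{\mu\in\mathcal M_{2n}}\prod_{\{i,j\}\in\mu}x_i^{\T}x_j=\haf(X^{\T}X)=H_{k,n},
\end{equation*}
in agreement with Eq.~\eqref{eq:haf-definition}. This is the same transpose-gram structure as in \cref{prop:conditional-mixture}.

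Next take an independent copy $h\sim N(0,I_k)$ and write
\begin{equation*}
 \abs{H_{k,n}}^2=\E_{g,h}\prod_{j=1}^{2n}(x_j^{\T}g)\,\overline{(x_j^{\T}h)}.
\end{equation*}
Take the expectation over $X$ and exchange it with $\E_{g,h}$. Fubini applies because the integrand is a polynomial in jointly Gaussian variables, so all absolute moments are finite. The columns $x_j$ are independent, so the $X$-average factorizes. Since $\E x x^*=I_k$ and $g,h$ are real, each factor is
\begin{equation*}
 \E_x\,(x^{\T}g)\overline{(x^{\T}h)}=g^{\T}\,\E[x\bar x^{\T}]\,h=g^{\T}h.
\end{equation*}
Hence $\E\abs{H_{k,n}}^2=\E_{g,h}(g^{\T}h)^{2n}$.

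To finish, condition on $h$. Then $g^{\T}h\sim N(0,\norm h^2)$, so its $2n$-th moment is $(2n-1)!!\,\norm h^{2n}$. Since $\norm h^2\sim\chi^2_k$, we have
\begin{equation*}
 \E\norm h^{2n}=2^n\frac{\Gamma(k/2+n)}{\Gamma(k/2)}=\prod_{q=0}^{n-1}(k+2q),
\end{equation*}
which yields \eqref{eq:exact-second-moment}.

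The only step that needs care is the first one. One must check that Wick's formula is applied to complex-coefficient forms in a real Gaussian vector, with the bilinear covariance $x_i^{\T}x_j$ and not $x_i^*x_j$. Using a real auxiliary vector, rather than a circular complex one, is exactly what produces the bilinear covariance. It is also what keeps the column average equal to $g^{\T}h$ and not to a quantity involving $\bar g$. Once that is verified, the remaining steps are routine moment computations.
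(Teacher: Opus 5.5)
Your proof is correct and complete: the real auxiliary vector gives $\E_g\prod_j x_j^{\T}g=\haf(X^{\T}X)$, the circular column average gives $g^{\T}h$, and the Gaussian and $\chi^2_k$ moments give $\E_{g,h}(g^{\T}h)^{2n}=(2n-1)!!\prod_{q=0}^{n-1}(k+2q)$. The paper does not reprove this proposition and instead cites two derivations. Yours is essentially the second, shorter one it describes, which compresses the matching enumeration into a single auxiliary Gaussian moment, rather than the superimposed-matching graph induction of Ehrenberg et al.; the same real-Gaussian Wick representation reappears in the paper's proof of the Wick angular obstruction.
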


\begin{proof}
See the two cited derivations immediately preceding the proposition:
Ref.~\cite[Theorem~1 and Supplemental Sec.~S3]{EhrenbergEtAl2025Transition}
and Ref.~\cite[Eq.~(3) and Supplemental Eq.~(S9)]{ZhaoExactMoments2026}.
\end{proof}

%% file: sections/03_fourier_compression.tex
\section{Fourier compression for odd hafnian cofactors}
\label{sec:fourier}

This section proves the first inverse moment step in
\eqref{eq:fourier-overview}.  Its analytic mechanism is adapted from the
quadratic scaling in one direction, Gaussian geometric interpolation, and
two coordinate compression of Koehler and Leung
\cite[Lemmas~3.1 and 3.2 and Corollary~3.3]{KoehlerLeung2026}.  Their
cofactor level iteration and Fourier to Laplace passage appear as
\cite[Proposition~3.5 and Lemma~3.4]{KoehlerLeung2026}.  We give a
self contained derivation because the dependent transpose-gram hafnian law requires a
new bilinear cofactor expansion and an exact identification of the
conditional real phase.

The argument has three steps.  A bilinear Gaussian identity isolates the
phase created by two selected coordinates.  Iterating the resulting geometric
mean inequality compresses every test direction to one coordinate.  A positive
Fourier kernel then converts the characteristic function comparison into the
inverse moment estimate required by the recursion.

For $r\ge2$, define the characteristic function of the cofactor vector by
\begin{equation}
  \phi_r(z):=\E\exp\!\left(i\operatorname{Re}(z^*C_r)\right),
  \qquad z\in\C^{2r-1}.
  \label{eq:phi-definition}
\end{equation}

Equivalently, a coefficient $w_j$ in
$\operatorname{Re}\sum_j w_j(C_r)_j$ corresponds to
$z_j=\overline{w_j}$ under the convention in
Eq.~\eqref{eq:phi-definition}.

Figure~\ref{fig:fourier-cofactor-compression} gives an
educational map of the proof from the shared cofactor expansion to the first
inverse moment recursion.

The bilinear interpolation and the complex to real coordinate conversion are
proved in Appendix~\ref{app:fourier-algebra}.  The main text keeps the
Gram specific compression statement and its probabilistic consequence.

Here is the Gram specific algebra that makes the compression work.  Condition
on all columns except two, call the selected columns $X,Y$, and use the
quantities $q_R$ and $M_{j,R}$ defined in Appendix~\ref{app:fourier-algebra}.
Direct hafnian expansion gives
\[
 \begin{aligned}
 (C_r)_1&=Y^{\T}q_R,&
 (C_r)_2&=X^{\T}q_R,\\
 (C_r)_j&=X^{\T}M_{j,R}Y &&(j\ge3).
 \end{aligned}
\]
The first two cofactors therefore share the \emph{same} random linear map;
the remaining ones are bilinear in the same pair.  After realification, these
identities put the full conditional phase into one Gaussian bilinear form.
That exact shared map structure, rather than an independence approximation,
is the new input that permits the two coordinate interpolation below.

\begin{proposition}[Cofactor Fourier compression]
\label[proposition]{prop:cofactor-compression}
The following is the Gram hafnian counterpart of the two coordinate
compression in \cite[Proposition~3.5]{KoehlerLeung2026}.
For every $k\ge1$, $r\ge2$, and $z\in\C^{2r-1}$,
\begin{equation}
  \abs{\phi_r(z)}
  \le \phi_r(\norm z e_1)
  =\E\exp\!\left(-\frac{\norm z^2V_{r-1}}4\right).
  \label{eq:cofactor-compression}
\end{equation}
\end{proposition}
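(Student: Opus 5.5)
The equality in \eqref{eq:cofactor-compression} is a direct application of \cref{prop:conditional-mixture} one level down, and the inequality will come from a two-coordinate merging step that is iterated. For the equality, note that $(C_r)_1=\haf(A_{r,-1}^{\T}A_{r,-1})$ is a hafnian of $2r-2$ Gaussian columns. Applying \cref{prop:conditional-mixture} at level $r-1$ to these columns (past $x_2,\dots,x_{2r-2}$, last column $x_{2r-1}$) gives $(C_r)_1\mid\text{past}\sim\CN(0,V_{r-1})$, where $V_{r-1}$ is computed from the relabelled past and has the law of $V_{r-1}$. For $Z\sim\CN(0,v)$ and $t\ge0$, $\operatorname{Re}(tZ)\sim N(0,t^2v/2)$, so $\E e^{i\operatorname{Re}(tZ)}=e^{-t^2v/4}$. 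Averaging over the past gives $\phi_r(\norm z e_1)=\E\exp(-\norm z^2V_{r-1}/4)$. The same argument shows that $\phi_r(w e_1)$ depends only on $\abs w$, since the conditional law of $(C_r)_1$ is circular.

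For the inequality, the plan is a merging lemma: for any two indices $a\neq b$,
\[
\abs{\phi_r(z)}\le \abs{\phi_r(z')},\qquad z'_a=\sqrt{\abs{z_a}^2+\abs{z_b}^2},\; z'_b=0,\; z'_j=z_j\ (j\ne a,b).
\]
The hafnian is invariant under simultaneous permutation of columns, so the law of $C_r$ is permutation-equivariant and it suffices to take $\{a,b\}=\{1,2\}$. Condition on all columns except the selected pair $X=x_1$, $Y=x_2$, and use the shared-map expansion quoted before the proposition:
\[
\operatorname{Re}(z^*C_r)=\operatorname{Re}\bigl(\bar z_1Y^{\T}q_R+\bar z_2X^{\T}q_R+X^{\T}M_RY\bigr),\qquad M_R:=\sum_{j\ge3}\bar z_jM_{j,R}.
\]
This is a real Gaussian bilinear phase in the realified pair $(X,Y)$. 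The key feature is that the linear parts in $X$ and in $Y$ point along the \emph{same} vector $q_R$, with coefficients $\bar z_2$ and $\bar z_1$.

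The core of the argument is the bilinear interpolation from Appendix~\ref{app:fourier-algebra}, adapted from Koehler--Leung Lemmas 3.1--3.2. Integrating $X$ out exactly gives the positive Gaussian weight $\exp(-\tfrac14\norm{\bar z_2\,\overline{q_R}+\overline{M_R Y}}^2)$, up to the realification convention, multiplied by the phase $e^{i\operatorname{Re}(\bar z_1Y^{\T}q_R)}$ in $Y$. I then intend to use rotation invariance of the pair $(X,Y)\mapsto(cX+sY,-\bar sX+\bar cY)$ together with a Gaussian geometric-mean (Hölder/Cauchy--Schwarz along a one-parameter interpolation) inequality. The aim is to show that the log of the conditional modulus is controlled by its value when the pair $(\bar z_1,\bar z_2)$ is rotated to $(\sqrt{\abs{z_1}^2+\abs{z_2}^2},0)$. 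In that rotation, the bilinear term $X^{\T}M_RY$ must be shown not to increase the modulus. It is a cross term whose conditional expectation only adds positive quadratic damping, so the plan is to bound it by dropping it via the interpolation. Taking expectation over the conditioning columns then gives the merging lemma.

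With the merging lemma in hand, I iterate it $2r-2$ times, folding every coordinate into coordinate $1$; this preserves $\norm z$. That yields $\abs{\phi_r(z)}\le\abs{\phi_r(\norm z\,u\,e_1)}$ for some unimodular $u$, and the circularity observation removes $u$. The expected main obstacle is the merging step itself. Identifying the exact conditional real phase after realification, and checking that the geometric interpolation handles the bilinear term $X^{\T}M_RY$ without losing a factor, is where the shared-map structure $(C_r)_1=Y^{\T}q_R$, $(C_r)_2=X^{\T}q_R$ must be used exactly. I would first try to do this by computing the joint Gaussian integral over $(X,Y)$ explicitly as a determinant-times-exponential and comparing the rotated and unrotated expressions.
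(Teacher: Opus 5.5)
The parts of your proposal around the key step are sound: the equality via \cref{prop:conditional-mixture} one level down, the circularity of the singleton characteristic function, and the outer iteration. The gap is that the merging inequality, which carries the whole content of the bound, is not proved, and the mechanism you sketch for it would fail.

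The mixing $(X,Y)\mapsto(cX+sY,-\bar sX+\bar cY)$ preserves the Gaussian law and can rotate the linear part onto one coordinate. But since $M_R$ is symmetric, it turns $X^{\T}M_RY$ into $-c\bar s\,X^{\T}M_RX+(\abs{c}^2-\abs{s}^2)X^{\T}M_RY+s\bar c\,Y^{\T}M_RY$. The rotated phase is therefore not $\operatorname{Re}(z'^*C_r)$ for any $z'$.

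Nor can the bilinear term be dropped. The target $\phi_r(z')$ must retain $z_3,\ldots,z_{2r-1}$. Otherwise part of $\norm{z}$ is lost, and since $t\mapsto\E e^{-t^2V_{r-1}/4}$ is decreasing, the iteration would end at a weaker bound. Moreover, the cross term does not merely add damping. With $T=T_R$ and $L=L_R$ as in \cref{lem:bilinear-interpolation}, completing the square gives
\[
 \abs{F_{T,L}(a,b)}=\det(I+T^{\T}T)^{-1/2}
 \exp\Bigl(-\tfrac12(Lb)^{\T}(I+TT^{\T})^{-1}Lb
 -\tfrac12(La)^{\T}(I+T^{\T}T)^{-1}La\Bigr).
\]
So $T$ lowers the prefactor but weakens the damping of the linear terms, and there is no general monotone comparison with $T=0$.

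This formula is the missing idea. All interaction between $a$ and $b$ sits in the phase, and the log-modulus is a sum of separate quadratic forms in $a$ and $b$ plus a common $T$-dependent constant. Put $\rho^2=\abs{z_1}^2+\abs{z_2}^2$, $\theta=\abs{z_1}^2/\rho^2$, $a'=a/\sqrt\theta$ and $b'=b/\sqrt{1-\theta}$. Then one gets the exact identity $\abs{F(a,b)}=F(a',0)^\theta F(0,b')^{1-\theta}$, with the same $T$ in both positive endpoints.

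The paper then proceeds as follows. It applies the triangle inequality and H\"older's inequality in $R$ and bounds by the larger endpoint. It recognizes each endpoint expectation as $\phi_r$ at a vector with one selected coordinate deleted, the other of modulus $\rho$, and all remaining coordinates unchanged. Exchangeability and common-phase invariance are used once at the end.

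Your fallback of computing the joint integral explicitly is the right move. Carried through, it even yields your single-target lemma conditionally on $R$. Each $M_{j,R}$ is symmetric, so $T_R$ is real symmetric and anticommutes with the realification $J$ of multiplication by $i$. Hence $(I+T_R^2)^{-1}$ commutes with $J$. Since the shared map $L_R$ has columns of the form $\ell$ and $-J\ell$, both quadratic forms reduce to $\kappa_R\norm{a}^2$ and $\kappa_R\norm{b}^2$ for a single scalar $\kappa_R$. This computation and these structural facts are the proof of the merging step, and your proposal does not supply them.
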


\begin{proof}
See Appendix~\ref{app:fourier-algebra}.
\end{proof}

We next turn this characteristic function bound directly into the extended
inverse moment estimate needed for the induction, following the positive kernel
principle in \cite[Lemma~3.4]{KoehlerLeung2026}.

\begin{proposition}[Fourier inverse moment step]
\label[proposition]{prop:fourier-to-laplace}
For $r\ge2$ and $k\ge2r-1$, the extended inverse moments satisfy
\begin{equation}
  \mathcal W_{k,r}
  \le \mathcal V_{k,r-1}\frac1{2r-2}
  \qquad\text{in }[0,\infty].
  \label{eq:W-inverse-recursion}
\end{equation}
\end{proposition}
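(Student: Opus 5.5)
The plan is to write $W_r^{-1}=\norm{C_r}^{-2}$ as a positive superposition of characteristic functions of $C_r$. Each of these can then be bounded through \cref{prop:cofactor-compression}, and the resulting Laplace-type integral can be evaluated in closed form. Put $m:=2r-1$.

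\textbf{Step 1: positive kernel representation.} For any $c\in\C^m$ we have
\[
 \norm c^{-2}=\int_0^\infty e^{-t\norm c^2}\,\dd t ,
\]
which equals $+\infty$ when $c=0$. Next, let $Z\sim\CN(0,4tI_m)$ be independent of everything else. The real variable $\operatorname{Re}(Z^*c)$ is centered Gaussian with variance $2t\norm c^2$. Therefore
\[
 e^{-t\norm c^2}=\E_Z\exp\bigl(i\operatorname{Re}(Z^*c)\bigr).
\]
Substituting $c=C_r$ and taking expectations gives
\[
 \mathcal W_{k,r}=\int_0^\infty \E\, e^{-t\norm{C_r}^2}\,\dd t .
\]
This holds in $[0,\infty]$ by Tonelli, since the integrand is nonnegative.

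\textbf{Step 2: Fourier compression.} Fix $t>0$. The inner expectation $\E_{C}\E_Z e^{i\operatorname{Re}(Z^*C_r)}$ has a bounded integrand, so Fubini lets us exchange the two expectations. This gives $\E_Z\phi_r(Z)$, which is a nonnegative real number because it equals $\E e^{-t\norm{C_r}^2}$. Hence
\[
 \E_Z\phi_r(Z)\le \E_Z\abs{\phi_r(Z)}\le \E_Z\E\exp\!\left(-\tfrac{\norm Z^2 V_{r-1}}{4}\right),
\]
where the second inequality is \cref{prop:cofactor-compression} (valid for $k\ge1$, $r\ge2$). In the last expression $V_{r-1}$ is computed from an independent copy of the past Gaussian columns, so it is independent of $Z$.

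\textbf{Step 3: evaluating the Laplace integral.} We have $\norm Z^2/4=t\,G_m$ with $G_m\sim\GammaLaw(m,1)$ independent of $V_{r-1}$. Averaging over $G_m$ gives
\[
 \E_Z \exp\!\left(-\tfrac{\norm Z^2 V_{r-1}}{4}\right)=(1+tV_{r-1})^{-m}.
\]
Integrating in $t$, again using Tonelli with a nonnegative integrand, gives
\[
 \mathcal W_{k,r}\le \E\int_0^\infty (1+tV_{r-1})^{-m}\,\dd t=\E\frac{1}{(m-1)V_{r-1}}=\frac{1}{2r-2}\,\mathcal V_{k,r-1}.
\]
This uses $m\ge3$, so the $t$-integral converges whenever $V_{r-1}>0$. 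When $V_{r-1}=0$ the integral is $+\infty$, which is consistent with the extended convention. Under $k\ge2r-1$ we also have $V_{r-1}>0$ almost surely, by the preliminaries.

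\textbf{Main obstacle.} The only delicate point is the justification of the interchanges in the extended-valued setting. The characteristic function $\phi_r$ is complex, so Tonelli cannot be applied to it directly. This is why I first apply Tonelli to the manifestly nonnegative quantity $e^{-t\norm{C_r}^2}$. Only afterwards, for each fixed $t$, do I pass to $\phi_r$ via bounded Fubini and pass to the modulus using $\operatorname{Re}\le\abs{\cdot}$. Once this ordering is respected, the bound from \cref{prop:cofactor-compression} is applied pointwise in $t$, and the remaining steps are routine Gamma-integral computations.
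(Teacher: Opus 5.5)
Your proposal is correct and follows essentially the paper's route. The law of $Z\sim\CN(0,4tI_m)$ is exactly the paper's nonnegative Gaussian Fourier kernel, after which both arguments apply \cref{prop:cofactor-compression} and the Laplace representation with Tonelli. The only cosmetic difference is that you evaluate $\int_0^\infty(1+tV_{r-1})^{-m}\,\dd t$ directly, whereas the paper keeps $\norm\gamma^2\sim\GammaLaw(2r-1,1)$ and uses $\E G_{2r-1}^{-1}=(2r-2)^{-1}$; this is the same computation.
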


\begin{proof}
Realify $\C^{2r-1}$ as $\R^{4r-2}$.  For $t>0$ and $u$ in this real space,
Gaussian Fourier inversion \cite{Folland1992FourierAnalysis} gives
\begin{equation}
  e^{-t\norm u^2}
  =(4\pi t)^{-(2r-1)}
  \int_{\R^{4r-2}}
  e^{-\norm\xi^2/(4t)}e^{i\xi\cdot u}\,\dd\xi.
\end{equation}
The kernel is nonnegative.  Let $\gamma\sim\CN(0,I_{2r-1})$ be independent of
$V_{r-1}$ and set $Y=\sqrt{V_{r-1}}\gamma$.  Its characteristic function is
\begin{equation}
  \phi_Y(z)=\E e^{-\norm z^2V_{r-1}/4}.
\end{equation}
By \cref{prop:cofactor-compression},
$\operatorname{Re}\phi_r(z)\le\abs{\phi_r(z)}\le\phi_Y(z)$.
{Under this inversion, the Gaussian kernel integrals of
$\operatorname{Re}\phi_r$ and $\phi_Y$ are respectively
$\E e^{-t\norm{C_r}^2}$ and $\E e^{-t\norm{Y}^2}$.}
Integrating the inequality against the displayed nonnegative Fourier kernel
yields
\begin{equation}
  \E e^{-tW_r}
  \le\E e^{-tV_{r-1}\norm\gamma^2}.
\end{equation}
Since $G_{2r-1}:=\norm\gamma^2\sim\GammaLaw(2r-1,1)$, its inverse moment is
$\E G_{2r-1}^{-1}=(2r-2)^{-1}$.

Finally, Tonelli's theorem and the Laplace representation
$x^{-1}=\int_0^\infty e^{-tx}\,\dd t$
\cite[Chapters~1 and~2]{Kallenberg2021} give
\begin{align}
  \mathcal W_{k,r}
  &\le\mathcal V_{k,r-1}\,\E G_{2r-1}^{-1}
  =\frac1{2r-2}\mathcal V_{k,r-1},
\end{align}
which proves \eqref{eq:W-inverse-recursion}.  Monotone convergence justifies
the statement without first assuming either inverse moment finite.
{The condition $k\ge2r-1$ ensures $W_r>0$ almost surely and also
$V_{r-1}>0$ almost surely, since $2r-3\le k$; these are precisely the
positivity conditions used above.}
\end{proof}

%% file: sections/04_conditional_wishart.tex
\section{Conditional Wishart geometry}
\label{sec:wishart}

This section isolates the geometric mechanism behind
Eq.~\eqref{eq:V-to-W-inverse}.  The positive definite cone underlying the real
Wishart distribution~\cite{Wishart1928,Muirhead1982} carries two complementary
complex Gram coordinates.  Fixing the transpose-gram coordinate defines a
relatively open fiber, and motion along that fiber varies the Hermitian Gram coordinate
without changing any cofactor data determined by the transpose-gram matrix.
We combine a score identity along these fiber directions with Schur complement
order, first for a general measurable direction and then for the hafnian
cofactor vector.  The word conditional refers to fixing the transpose-gram
coordinate; the proof does not require a formula for a conditional Wishart
density.

At recursion level $r$, take
\[
 \begin{gathered}
 m=2r-1,\qquad c(S)=C_r,\\
 S=A^{\T}A,\qquad Q=A^*A.
 \end{gathered}
\]
Thus $S$ is the preserved coordinate, $Q$ is the varying Hermitian coordinate,
and the cofactor vector is fixed along each $S$ fiber.  We first prove the
$m$ dimensional statement for a general measurable $c(S)$ and then make this
substitution.

\subsection{The Wishart cone and its transpose-gram fibers}

Let $\mathcal P_{2m}:=\{M=M^{\T}\in\R^{2m\times2m}:M\succ0\}$.  Write a point
of this positive definite real Wishart cone in blocks as
\begin{equation}
 \mathcal P_{2m}\ni M=
 \begin{pmatrix}P&C\\C^{\T}&D\end{pmatrix},
 \qquad M\succ0.
 \label{eq:wishart-cone-blocks}
\end{equation}
The linear maps
\begin{equation}
 \begin{aligned}
  \mathfrak s(M)&:=P-D+i(C+C^{\T}),\\
  \mathfrak q(M)&:=P+D+i(C-C^{\T})
 \end{aligned}
 \label{eq:wishart-gram-coordinate-maps}
\end{equation}
take values in the complex symmetric and Hermitian matrices, respectively.
Indeed, for $A=X+iY$, $R=[X\;Y]$, and $M(R)=R^{\T}R$,
\begin{equation}
 \begin{aligned}
 S(R)&:=A^{\T}A=\mathfrak s(M(R)),\\
 Q(R)&:=A^*A=\mathfrak q(M(R)).
 \end{aligned}
 \label{eq:two-gram-coordinates}
\end{equation}
The pair $(\mathfrak s,\mathfrak q)$ is a real linear coordinate system,
because
\begin{equation}
 \begin{aligned}
 P&=\frac{\operatorname{Re}\mathfrak q+\operatorname{Re}\mathfrak s}{2},
 &D&=\frac{\operatorname{Re}\mathfrak q-\operatorname{Re}\mathfrak s}{2},\\
 C&=\frac{\operatorname{Im}\mathfrak s+\operatorname{Im}\mathfrak q}{2}.
 \end{aligned}
 \label{eq:inverse-gram-coordinate-map}
\end{equation}
For a fixed complex symmetric matrix $s$, define the relatively open fiber
\begin{equation}
 \mathcal F_s:=\{M\in\mathcal P_{2m}:\mathfrak s(M)=s\}.
 \label{eq:transpose-gram-fiber}
\end{equation}

\begin{proposition}[Affine kernel coordinates of a transpose-gram fiber]
\label[proposition]{prop:transpose-gram-fiber-tangent}
For a Hermitian \(H=H_0+iH_1\), define the real symmetric direction
\begin{equation}
 \Delta(H):=\frac12
 \begin{pmatrix}H_0&H_1\\-H_1&H_0\end{pmatrix}.
 \label{eq:delta-direction}
\end{equation}
Then
\begin{equation}
 \partial_{\Delta(H)}\mathfrak s=0,
 \quad
 \partial_{\Delta(H)}\mathfrak q=H,
 \quad
 \tr(\Delta(H))=\tr(H).
 \label{eq:delta-coordinate-derivatives}
\end{equation}
Moreover,
\begin{equation}
 \begin{aligned}
 \mathcal K_{\mathfrak s}
 &:=\{D_0=D_0^{\T}:\partial_{D_0}\mathfrak s=0\},\\
 \mathcal K_{\mathfrak s}
 &=\{\Delta(H):H=H^*\}.
 \end{aligned}
 \label{eq:fiber-tangent-space}
\end{equation}
\end{proposition}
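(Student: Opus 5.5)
Since $\mathfrak s$ and $\mathfrak q$ in \eqref{eq:wishart-gram-coordinate-maps} are real linear maps on the space of real symmetric $2m\times2m$ matrices, the directional derivative of each along a symmetric direction $D_0$ is the map evaluated at $D_0$: $\partial_{D_0}\mathfrak s=\mathfrak s(D_0)$ and $\partial_{D_0}\mathfrak q=\mathfrak q(D_0)$. The plan is therefore to evaluate these maps on block matrices. No analysis on the fiber $\mathcal F_s$ is needed. Openness of $\mathcal P_{2m}$ only guarantees that these directions are admissible from interior points.

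First I will record what Hermitian means in real terms. Writing $H=H_0+iH_1$, the condition $H=H^*$ is equivalent to $H_0^{\T}=H_0$ and $H_1^{\T}=-H_1$. Consequently $\Delta(H)$ is real symmetric: its diagonal blocks are symmetric, and its lower-left block $-H_1/2$ equals $(H_1/2)^{\T}$, the transpose of the upper-right block. In the notation of \eqref{eq:wishart-cone-blocks}, $\Delta(H)$ has
\[
 P=D=\tfrac12H_0,\qquad C=\tfrac12H_1 .
\]
Substituting these blocks gives
\[
 \mathfrak s(\Delta(H))=0+\tfrac i2(H_1+H_1^{\T})=0,
 \qquad
 \mathfrak q(\Delta(H))=H_0+\tfrac i2(H_1-H_1^{\T})=H_0+iH_1=H .
\]
For the trace, $\tr\Delta(H)=\tfrac12(\tr H_0+\tr H_0)=\tr H_0$. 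This equals $\tr H$ because $\tr H_1=0$ for an antisymmetric matrix. These three computations give \eqref{eq:delta-coordinate-derivatives}.

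For \eqref{eq:fiber-tangent-space}, the inclusion $\{\Delta(H)\}\subseteq\mathcal K_{\mathfrak s}$ is exactly the first identity. For the reverse inclusion, take a symmetric $D_0$ with blocks $P,C,C^{\T},D$, where $P$ and $D$ are symmetric, and suppose $\mathfrak s(D_0)=0$. The real part gives $P=D$, and the imaginary part gives $C=-C^{\T}$. Now set $H_0:=2P$ and $H_1:=2C$. Then $H_0$ is symmetric and $H_1$ is antisymmetric, so $H=H_0+iH_1$ is Hermitian. Moreover $\Delta(H)$ has blocks $P$, $C$, $-C=C^{\T}$, and $P=D$, so $\Delta(H)=D_0$.

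As a consistency check, $\dim_\R\mathcal K_{\mathfrak s}=m^2$, which is the real dimension of the Hermitian $m\times m$ matrices. It also equals $m(2m+1)-m(m+1)$, the dimension of the symmetric matrices minus that of the complex symmetric target, which is consistent with $\mathfrak s$ being surjective.

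There is no real obstacle in this argument. The only point needing care is the sign convention in the lower-left block of $\Delta(H)$: it must match $C^{\T}$ of the upper-right block $H_1/2$, and this uses the antisymmetry of $H_1$.
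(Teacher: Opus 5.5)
Your proposal is correct and follows the paper's proof. Both arguments use linearity of $\mathfrak s$ and $\mathfrak q$ to reduce the directional derivatives to a block evaluation, and both characterize the kernel by reading off $P=D$ and $C^{\T}=-C$ and then setting $H_0=2P$, $H_1=2C$. You also check explicitly that $\Delta(H)$ is real symmetric and verify the dimension count, which the paper leaves implicit.
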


\begin{proof}
Write a real symmetric direction as
\(
 D_0=\begin{psmallmatrix}U&V\\V^{\T}&W\end{psmallmatrix}
\).
Since the coordinate maps in Eq.~\eqref{eq:wishart-gram-coordinate-maps} are
linear,
\[
 \partial_{D_0}\mathfrak s=U-W+i(V+V^{\T}).
\]
This vanishes exactly when $W=U$ and $V^{\T}=-V$.  In that case, put
$H_0=2U$ and $H_1=2V$.  Then $H=H_0+iH_1$ is Hermitian and
$D_0=\Delta(H)$.  Direct substitution gives
$\partial_{\Delta(H)}\mathfrak q=H$ and
$\tr(\Delta(H))=\tr(H)$.  Conversely, every Hermitian $H$ gives a real
symmetric $\Delta(H)$ in this kernel, proving the exact affine kernel identity
in Eq.~\eqref{eq:fiber-tangent-space}.
\end{proof}

The proposition identifies the affine fiber directions used below, without
requiring any manifold tangent space formalism.  The maps $H\mapsto
\Delta(H)$ and $D_0\mapsto\partial_{D_0}\mathfrak q$ give inverse coordinates
on the symmetric kernel of the preserved coordinate.  Thus a vector $c(S)$ determined by the
transpose-gram matrix remains fixed along every fiber direction, while its quadratic
form $c(S)^*Q c(S)$ can be varied in an arbitrary Hermitian direction.
Figure~\ref{fig:conditional-wishart-geometry} summarizes this separation.

\begin{figure}[tbp]
 \centering
 \includegraphics[width=0.96\textwidth]{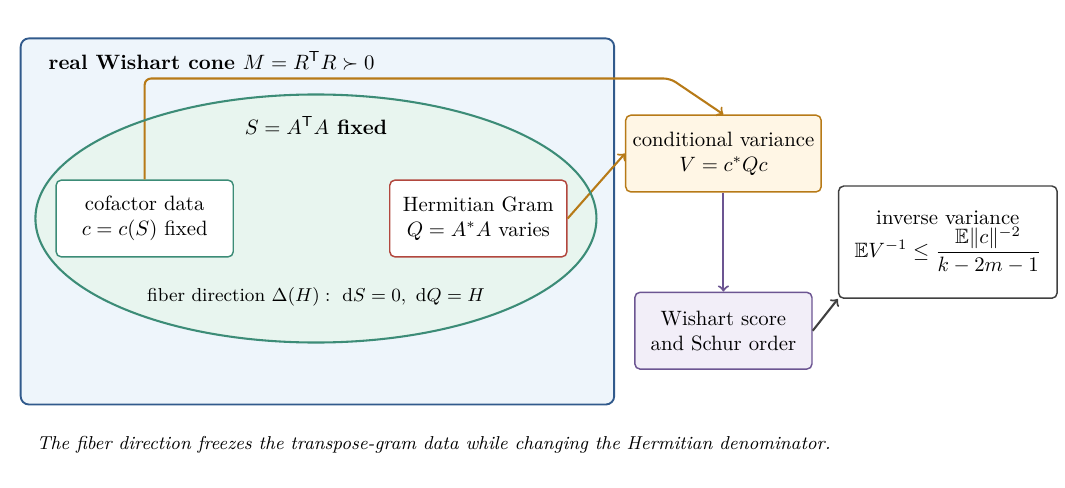}
 \caption{Conditional Wishart geometry.  The real Gram matrix
 $M=R^{\T}R$ moves inside a relatively open fiber on which the transpose-gram
 coordinate $S=A^{\T}A$ is fixed.  The affine fiber direction $\Delta(H)$ therefore
 freezes every vector $c(S)$ while changing the Hermitian Gram coordinate by
 $\partial_{\Delta(H)}Q=H$.  A score identity along these directions and
 Schur complement order control the inverse conditional variance.}
 \label{fig:conditional-wishart-geometry}
\end{figure}

\subsection{A score identity along the fibers}

Let \(A=X+iY\in\C^{k\times m}\), where the entries of \(X,Y\) are independent
\(N(0,1/2)\) variables, and put
\[
 R=[X\;Y]\in\R^{k\times2m},\qquad M(R)=R^{\T}R.
\]
Write \(\mu_{k,m}\) for this product half Gaussian matrix law and set
\begin{equation}
 \alpha_{k,m}:=\frac{k-2m-1}{2}.
 \label{eq:wishart-score-alpha}
\end{equation}
Haff's Theorem~2.1 supplies the classical ambient Wishart integration by parts
framework~\cite{Haff1979}.  The preserved $S$, bounded Borel specialization
below is proved in Appendix~\ref{app:wishart-boundary}
by an unsliced Gaussian coordinate cutoff argument.  Its bounded Borel form is
what permits the direction to depend on the preserved coordinate.

\begin{proposition}[Bounded Borel preserved \(S\) score]
\label[proposition]{prop:bounded-preserved-S-score}
Assume \(k>2m+1\).  Let
\(u:\{s\in\C^{m\times m}:s^{\T}=s\}\to\R\) be Borel measurable and
suppose that \(\abs{u(s)}\le C\) for every symmetric \(s\).  For every Hermitian
\(H\), both integrands below are integrable and
\begin{multline}
 \int u(S(R))\,\alpha_{k,m}
   \tr\!\bigl(M(R)^{-1}\Delta(H)\bigr)\,\dd\mu_{k,m}(R)
 \\
 =\tr\!\bigl(\Delta(H)\bigr)
   \int u(S(R))\,\dd\mu_{k,m}(R).
 \label{eq:bounded-preserved-S-score}
\end{multline}
\end{proposition}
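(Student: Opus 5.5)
The plan is to pass from the Gaussian matrix $R$ to the real Wishart matrix $M=M(R)=R^{\T}R$. Then I would integrate by parts along the affine fiber direction $\Delta(H)$ of \cref{prop:transpose-gram-fiber-tangent}. Along that direction the factor $u(\mathfrak s(M))$ is constant, so it behaves like a constant even though it is only Borel.

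\textbf{Reduction to a cone integral.} The rows of $R$ are i.i.d.\ $N(0,\tfrac12 I_{2m})$ and $k>2m+1\ge 2m$. Hence $M$ has the real Wishart density on the open cone $\mathcal P_{2m}$,
\[
 f(M)=c_{k,m}\det(M)^{\alpha_{k,m}}e^{-\tr M},
 \qquad \alpha_{k,m}=\tfrac{k-2m-1}{2}>0,
\]
with respect to Lebesgue measure on real symmetric matrices. Both sides of \eqref{eq:bounded-preserved-S-score} depend on $R$ only through $M$, because $S(R)=\mathfrak s(M(R))$ by \eqref{eq:two-gram-coordinates}. So it suffices to prove
\[
 \int_{\mathcal P_{2m}} u(\mathfrak s(M))\,\partial_{\Delta}f(M)\,\dd M=0,
 \qquad \Delta=\Delta(H).
\]
Indeed, on the interior $\partial_\Delta f=f\cdot\bigl(\alpha_{k,m}\tr(M^{-1}\Delta)-\tr\Delta\bigr)$, using $\partial_\Delta\log\det M=\tr(M^{-1}\Delta)$. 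Integrability of both integrands follows from $|u|\le C$ and $|\tr(M^{-1}\Delta)|\le\|\Delta\|\tr(M^{-1})$. It also uses the classical finiteness $\E\tr(M^{-1})<\infty$ for a real Wishart matrix of dimension $2m$ with $k>2m+1$ degrees of freedom; this is exactly where the strict hypothesis enters.

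\textbf{Slicing along fiber lines.} Decompose the space of real symmetric matrices as $L\oplus\R\Delta$ for a complementary subspace $L$, and write $M=N+t\Delta$. By Fubini (legitimate since $|u\,\partial_\Delta f|$ is integrable), the integral becomes an integral over $N\in L$ of
\[
 \int_{I_N} u(\mathfrak s(N))\,\frac{\dd}{\dd t}f(N+t\Delta)\,\dd t .
\]
Here $I_N=\{t:N+t\Delta\succ0\}$ is an open interval, by convexity of the cone. The factor $u(\mathfrak s(N+t\Delta))=u(\mathfrak s(N))$ is independent of $t$, because $\partial_\Delta\mathfrak s=0$ and $\mathfrak s$ is linear. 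So no regularity of $u$ is needed, which is the point of the bounded Borel formulation.

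\textbf{Boundary terms (main obstacle).} It remains to show that $\int_{I_N}\frac{\dd}{\dd t}f(N+t\Delta)\,\dd t=0$ for almost every $N$. The function $t\mapsto f(N+t\Delta)$ is $C^1$ on $I_N$. For almost every $N$ its derivative is integrable on $I_N$, by Fubini. So the integral equals the difference of the limits of $f$ at the two ends of $I_N$.

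At a finite endpoint the matrix hits $\partial\mathcal P_{2m}$, so $\det\to0$. Since $\alpha_{k,m}>0$ and $e^{-\tr M}$ stays bounded on a bounded segment, $f\to0$. At an infinite endpoint, $N+t\Delta\succ0$ forces $\tr(N+t\Delta)\to\infty$ along the ray; otherwise the positive matrices would stay bounded. Then $\det(M)^{\alpha}e^{-\tr M}\le(\tr M/2m)^{2m\alpha}e^{-\tr M}\to0$.

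Hence every line contributes zero, and the identity follows. The delicate points are the almost-everywhere integrability on lines and the uniform vanishing at the cone boundary. If the line-by-line argument proved awkward to make fully rigorous, I would instead follow the paper's hint. That is, multiply by a smooth cutoff $\chi_\epsilon(M)$ supported away from $\partial\mathcal P_{2m}$ and infinity, integrate by parts exactly, and send $\epsilon\to0$ by dominated convergence. The cutoff-derivative terms would be controlled using $\alpha_{k,m}>0$ together with the integrability of $\tr(M^{-1})$.
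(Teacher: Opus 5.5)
Your proof is correct, and it takes a genuinely different route from the paper's. The paper never uses the Wishart density. It first proves the identity for $C^1$ compactly supported tests $\psi(S)$, by Gaussian integration by parts in the $2mk$ real coordinates of $R$. That step uses the lifted field $\tfrac12 R(R^{\T}R)^{-1}\Delta(H)$, regularized to $\tfrac12 R(M+\eta I)^{-1}\Delta(H)$ and cut off at $\|R\|_{\mathrm F}\le 2L$, with dominated convergence supplied by $\E\tr(M^{-1})=4m/(k-2m-1)$. It then reaches bounded Borel $u$ in a separate step: the pushforward signed measure $\nu_H$ annihilates all smooth compactly supported tests, hence vanishes.

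You instead pass to the cone, where the classical density $c_{k,m}\det(M)^{\alpha_{k,m}}e^{-\tr M}$ turns the integrand into $u(\mathfrak s(M))\,\partial_{\Delta}f(M)$, and you integrate along the lines $N+t\Delta(H)$. Since $\mathfrak s(\Delta(H))=0$, the Borel factor is literally constant on each line. This handles bounded Borel $u$ in one stroke and removes the smooth-test stage, the $\eta$-regularization, the Gaussian cutoff, and the measure-determination step. Your boundary terms do vanish: $\alpha_{k,m}>0$ kills $f$ at finite endpoints, and along an unbounded ray in the cone the trace grows linearly, giving exponential decay.

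What you pay for this brevity is reliance on the explicit Wishart density and the convexity of $\mathcal P_{2m}$. The paper's vector-field argument stays in Gaussian coordinates and needs no density formula, a point its appendix emphasizes. Your line-by-line Fubini argument is already rigorous, so the cutoff fallback you mention is unnecessary. The only omission is the trivial case $H=0$: there the splitting $L\oplus\R\Delta$ is unavailable, but both sides vanish.
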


\begin{proof}
See Appendix~\ref{app:wishart-boundary}.
\end{proof}

Because \eqref{eq:bounded-preserved-S-score} holds for every bounded Borel
test of \(S\), it also applies coordinate by coordinate to a bounded
Hermitian matrix valued function of \(S\).  This extension in finite dimensions
will be used below for a regularized direction of rank one.

\subsection{Deterministic Schur geometry}

For a deterministic complex matrix \(A\), set
\[
 Q=A^*A,\qquad S=A^{\T}A,\qquad
 K(A)=\begin{pmatrix}Q&\overline S\\S&\overline Q\end{pmatrix},
\]
and define
\[
 \mathcal S(A):=Q-\overline S\,\overline Q^{-1}S.
\]

\begin{proposition}[Coupled Schur order]
\label[proposition]{prop:deterministic-schur-order}
If the column map of \([A\;\overline A]\) is injective, then
\begin{equation}
 \begin{aligned}
  0&\prec\mathcal S(A)\preceq Q,\\
  Q^{-1}&\preceq\mathcal S(A)^{-1},\\
  (K(A)^{-1})_{11}&=\mathcal S(A)^{-1}.
 \end{aligned}
 \label{eq:deterministic-schur-order}
\end{equation}
Moreover, for every \(c\in\C^m\),
\[
 (c^*Qc)(c^*Q^{-1}c)\ge (c^*c)^2.
\]
\end{proposition}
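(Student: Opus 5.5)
The plan is to realize $K(A)$ as a Gram matrix and then read off every claim from the block-inverse formula and the positive semidefinite order. Put $B:=[A\;\overline A]\in\C^{k\times2m}$. Then
\[
 B^*B=\begin{pmatrix}A^*A&A^*\overline A\\ A^{\T}A&A^{\T}\overline A\end{pmatrix}
 =\begin{pmatrix}Q&\overline S\\ S&\overline Q\end{pmatrix}=K(A).
\]
Here $A^*\overline A=\overline{A^{\T}A}=\overline S$ and $A^{\T}\overline A=\overline{A^*A}=\overline Q$. Injectivity of the column map of $B$ gives $K(A)\succ0$. Its diagonal blocks are then positive definite, so $Q\succ0$ and $\overline Q\succ0$, and in particular $\overline Q$ is invertible and $\mathcal S(A)$ is well defined.

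The standard block-inverse (Schur complement) formula for a positive definite Hermitian matrix then says the following. The Schur complement of the $(2,2)$ block, $Q-\overline S\,\overline Q^{-1}S$, is positive definite, and it equals the inverse of the $(1,1)$ block of $K(A)^{-1}$. This gives $0\prec\mathcal S(A)$ and $(K(A)^{-1})_{11}=\mathcal S(A)^{-1}$.

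For the upper bound, note that $\overline S^{*}=S^{\T}$ conjugated equals $\overline{S}^{\,*}=S^{\T*}$. Since $S$ is symmetric, $\overline S=S^*$. Hence $\overline S\,\overline Q^{-1}S=S^*\overline Q^{-1}S\succeq0$, and therefore $\mathcal S(A)\preceq Q$. Operator monotonicity of inversion on positive definite matrices, i.e. $0\prec X\preceq Y\Rightarrow Y^{-1}\preceq X^{-1}$, then yields $Q^{-1}\preceq\mathcal S(A)^{-1}$. I would prove this step by conjugating with $Y^{-1/2}$, which reduces it to the case $Y=I$.

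The final inequality is Cauchy--Schwarz in the inner product determined by $Q\succ0$. Write $c^*c=\langle Q^{1/2}c,\;Q^{-1/2}c\rangle$. Then
\[
 \abs{c^*c}^2\le\norm{Q^{1/2}c}^2\,\norm{Q^{-1/2}c}^2=(c^*Qc)(c^*Q^{-1}c),
\]
and $c^*c$ is real and nonnegative. I expect no real obstacle here. The only points needing care are the conjugation bookkeeping in the identification $K(A)=B^*B$, specifically that the off-diagonal blocks are $\overline S$ and $S$ and are mutually adjoint, and the observation that injectivity of $[A\;\overline A]$, not merely of $A$, is exactly what makes $K(A)$, and not just $Q$, positive definite.
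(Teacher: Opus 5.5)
Your proposal is correct and follows the paper's proof essentially step for step: identify $K(A)$ as the Gram matrix of $[A\;\overline A]$, take the Schur complement of the $\overline Q$ block, observe that the subtracted term $S^*\overline Q^{-1}S$ is positive semidefinite, invert the order, and apply Cauchy--Schwarz to $Q^{1/2}c$ and $Q^{-1/2}c$. You also verify the block identities and $\overline S=S^*$ explicitly, which the paper leaves implicit; your sentence justifying $\overline S=S^*$ is garbled, but the conclusion is right, since $S^*=\overline{S^{\T}}=\overline S$ for symmetric $S$.
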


\begin{proof}
The matrix \(K(A)\) is the Hermitian Gram matrix of
\([A\;\overline A]\), hence it is positive definite under the injectivity
hypothesis.  Its principal block \(Q\) is therefore positive definite, and
the standard Schur complement formula
~\cite[Sec.~7.7]{HornJohnson2013MatrixAnalysis} gives
\[
 \mathcal S(A)=Q-\overline S\,\overline Q^{-1}S\succ0,
 \qquad (K(A)^{-1})_{11}=\mathcal S(A)^{-1}.
\]
The subtracted term is positive semidefinite, so
\(\mathcal S(A)\preceq Q\).  Inversion reverses the positive definite order
and gives \(Q^{-1}\preceq\mathcal S(A)^{-1}\).

For the last assertion, apply ordinary Cauchy Schwarz to
\(Q^{1/2}c\) and \(Q^{-1/2}c\):
\[
 (c^*c)^2
 =\abs{(Q^{1/2}c)^*(Q^{-1/2}c)}^2
 \le(c^*Qc)(c^*Q^{-1}c).
\]
\end{proof}

\subsection{A preserved coordinate inverse variance theorem}

Wishart integration by parts and complex covariance and relation coordinates
are classical~\cite{Haff1979,Goodman1963}.  We have not found in that literature
a score identity restricted to fixed transpose-gram fibers that permits the
direction to be an arbitrary measurable function of the preserved coordinate.
The following theorem gives the resulting inequality before any hafnian
structure is imposed.

\begin{theorem}[Preserved coordinate inverse variance]
\label{thm:preserved-coordinate-inverse-variance}
Let \(m\ge1\) and \(k>2m+1\).  Let \(A\in\C^{k\times m}\) have independent
standard circular complex Gaussian entries, and put
\[
 S=A^{\T}A,\qquad Q=A^*A.
\]
Let
\[
 c:\{s\in\C^{m\times m}:s^{\T}=s\}\longrightarrow\C^m
\]
be Borel measurable and suppose that \(\Pp\{c(S)=0\}=0\).  Define
\[
 W=c(S)^*c(S),\qquad V=c(S)^*Q c(S).
\]
Then, as an inequality of extended nonnegative expectations,
\begin{equation}
 \E V^{-1}
 \le\frac1{k-2m-1}\E W^{-1}.
 \label{eq:preserved-coordinate-inverse-variance}
\end{equation}
\end{theorem}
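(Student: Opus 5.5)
The plan is to apply the preserved-$S$ score identity, \cref{prop:bounded-preserved-S-score}, with a rank-one Hermitian direction built from $c(S)$. The chain of inequalities is then closed with the coupled Schur order of \cref{prop:deterministic-schur-order}. We take $H=c(S)c(S)^*$ and a scalar weight $u(S)=W^{-2}$. The weight is truncated so that it is bounded. Since $c$ is a Borel function of $S$ alone, this choice is allowed by the coordinatewise extension noted after \cref{prop:bounded-preserved-S-score}. The reason is that $\Delta(H)$ is then a bounded matrix-valued Borel function of the preserved coordinate, and \eqref{eq:bounded-preserved-S-score} applies entrywise. The right-hand side of the identity becomes $\E[u(S)\tr(cc^*)]=\E[u(S)W]$, which is $\E W^{-1}$ before truncation.

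The first step is the deterministic linear-algebra identity for the left-hand side. Writing $A=X+iY$ and $R=[X\;Y]$, the real Gram matrix $M=R^{\T}R$ is unitarily equivalent, via $\tfrac1{\sqrt2}\begin{psmallmatrix}I&iI\\I&-iI\end{psmallmatrix}$-type conjugation, to a multiple of $K(A)$. I therefore expect
\[
 \tr\bigl(M^{-1}\Delta(cc^*)\bigr)=2\,c^*\bigl(K(A)^{-1}\bigr)_{11}c=2\,c^*\mathcal S(A)^{-1}c .
\]
Here the real part is automatic, because $\Delta(H)$ is the realification of $H/2$ and $M^{-1}$ is the realification of $K^{-1}$. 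The constant $2$ must be checked carefully. It is precisely what converts $\alpha_{k,m}=(k-2m-1)/2$ into the stated $1/(k-2m-1)$.

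Injectivity of $[A\;\overline A]$ holds almost surely when $k\ge2m$, since $M$ is then almost surely positive definite. Granting this, \cref{prop:deterministic-schur-order} gives two bounds. Cauchy--Schwarz gives $V^{-1}=(c^*Qc)^{-1}\le c^*Q^{-1}c/W^2$, and Schur order gives $c^*Q^{-1}c\le c^*\mathcal S^{-1}c$. Combining them,
\[
 V^{-1}\le \frac{c^*\mathcal S^{-1}c}{W^2}=\frac{\tr(M^{-1}\Delta(cc^*))}{2W^2}.
\]
Taking expectations and using the score identity then yields
\[
 \E V^{-1}\le \frac{1}{2\alpha_{k,m}}\E\frac{\tr(\Delta(cc^*))}{W^2}=\frac1{k-2m-1}\E W^{-1}.
\]

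For rigor, the truncation uses $u_{\delta,L}(S)=W^{-2}\mathbf 1\{W\ge\delta,\ \|c(S)\|\le L\}$, or equivalently the direction $H_{\delta,L}=u_{\delta,L}\,cc^*$, which is bounded Borel in $S$. The inequality above is applied on the event where the truncation is active. The identity gives $\E[V^{-1}\mathbf 1_{\delta,L}]\le(k-2m-1)^{-1}\E[W^{-1}\mathbf 1_{\delta,L}]$. We then let $\delta\downarrow0$ and $L\uparrow\infty$ and use monotone convergence together with $\Pp\{c(S)=0\}=0$. This gives the claim in $[0,\infty]$ with no finiteness assumption.

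The main obstacle I anticipate is the realification identity $\tr(M^{-1}\Delta(H))=2\,c^*(K^{-1})_{11}c$. One must correctly match the block structure of $M^{-1}$, with $P,C,D$ as in \eqref{eq:wishart-cone-blocks}, against the coupled matrix $K(A)$, including the relation term $\overline S$. I would first verify it in the case $m=1$ and then prove it generally by writing $c^*K^{-1}c$ via the real-linear isomorphism $\C^m\to\R^{2m}$, $c\mapsto(\operatorname{Re}c,\operatorname{Im}c)$. The score identity itself is borrowed from the earlier proposition.
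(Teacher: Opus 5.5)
Your proposal is correct and follows essentially the same route as the paper. You apply the preserved-$S$ score identity coordinatewise to the rank-one direction $cc^*$ with a regularized $W^{-2}$ weight, and you use the trace identity $\tr(M^{-1}\Delta(cc^*))=2c^*\mathcal S(A)^{-1}c$, the bound $V^{-1}\le c^*\mathcal S(A)^{-1}c/W^2$ from Cauchy--Schwarz plus Schur order, and monotone convergence; the only difference is that the paper regularizes with the smooth weight $(W+\delta)^{-2}$ instead of your cutoff $W^{-2}\mathbf 1\{W\ge\delta\}$, for which the extra cap $\|c(S)\|\le L$ is unnecessary because the entries are already bounded by $1/\delta$.
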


\begin{proof}
The real matrix \(R=[\operatorname{Re}A\;\operatorname{Im}A]\) has \(2m\)
Gaussian columns and full column rank almost surely.  The complex matrix
\([A\;\overline A]\) is obtained from \(R\) by a fixed invertible change of
coordinates, so \cref{prop:deterministic-schur-order} applies almost surely.
The hypothesis on \(c\) and positive definiteness of \(Q\) give \(W,V>0\)
almost surely.

Abbreviate \(c=c(S)\) and define
\[
 \Xi:=c^*\mathcal S(A)^{-1}c\ge0.
\]
For \(\delta>0\), consider the Hermitian matrix valued function
\begin{equation}
 H_\delta(S):=\frac{cc^*}{(W+\delta)^2}.
 \label{eq:regularized-rank-one-direction}
\end{equation}
It is a Borel function of \(S\).  Each coordinate in a fixed real basis of
Hermitian matrices is bounded because
\[
 \frac{\abs{c_i}\abs{c_j}}{(W+\delta)^2}
 \le\frac{W}{(W+\delta)^2}\le\frac1{4\delta}.
\]
Write \(H_\delta(S)=\sum_\ell h_{\ell,\delta}(S)E_\ell\) in such a basis.
Apply \cref{prop:bounded-preserved-S-score} to each bounded scalar test
\(h_{\ell,\delta}\) and fixed Hermitian direction \(E_\ell\), then sum over
\(\ell\).  This gives
\begin{equation}
 \alpha_{k,m}\E\tr\!\left(M^{-1}\Delta(H_\delta(S))\right)
 =\E\tr\!\left(\Delta(H_\delta(S))\right).
 \label{eq:matrix-valued-preserved-score}
\end{equation}

The real and complex Gram coordinates are related by a fixed invertible block
change of basis.  The calculation in \cref{app:wishart-boundary} gives
\begin{align}
 \tr\!\left(\Delta(H_\delta(S))\right)
 &=\frac{W}{(W+\delta)^2},
 \label{eq:rank-one-score-trace}\\
 \tr\!\left(M^{-1}\Delta(H_\delta(S))\right)
 &=\frac{2\Xi}{(W+\delta)^2}.
 \label{eq:rank-one-inverse-score-trace}
\end{align}
Since \(2\alpha_{k,m}=k-2m-1\), substitution in
Eq.~\eqref{eq:matrix-valued-preserved-score} yields the exact regularized
identity
\begin{equation}
 \E\frac{\Xi}{(W+\delta)^2}
 =\frac1{k-2m-1}\E\frac{W}{(W+\delta)^2}.
 \label{eq:regularized-coupled-inverse-identity}
\end{equation}

Matrix Cauchy Schwarz and
\cref{prop:deterministic-schur-order} give, almost surely,
\begin{equation}
 W^2
 \le(c^*Qc)(c^*Q^{-1}c)
 \le V\Xi.
 \label{eq:cofactor-schur-cauchy}
\end{equation}
Thus \(V^{-1}\le\Xi/W^2\).  As \(\delta\) decreases to zero, the two
nonnegative integrands in Eq.~\eqref{eq:regularized-coupled-inverse-identity}
increase monotonically to \(\Xi/W^2\) and \(W^{-1}\).  Monotone convergence in
the extended nonnegative reals gives
\[
 \E V^{-1}
 \le\E\frac{\Xi}{W^2}
 =\frac1{k-2m-1}\E W^{-1},
\]
which proves Eq.~\eqref{eq:preserved-coordinate-inverse-variance}.
\end{proof}

\subsection{Hafnian cofactor specialization}

The odd hafnian cofactor vector is a polynomial, hence a Borel function, of
the preserved coordinate \(S=A_r^{\T}A_r\).  The general theorem therefore
gives the exact recursion used in the main proof.

\begin{corollary}[Self coupled hafnian direction]
\label[corollary]{cor:self-coupled-direction}
For \(r\ge2\) and \(k\ge4r\),
\begin{equation}
 \mathcal V_{k,r}
 \le \mathcal W_{k,r}\frac1{k-4r+1}
 \qquad\text{in }[0,\infty].
 \label{eq:V-to-W-inverse}
\end{equation}
\end{corollary}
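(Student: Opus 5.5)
The plan is to specialize \cref{thm:preserved-coordinate-inverse-variance} with $m=2r-1$, $A=A_r$, and the cofactor vector as the measurable direction. The hypothesis $k>2m+1$ reads $k>4r-1$, which is exactly $k\ge4r$ for integers. In that case the constant becomes $k-2m-1=k-4r+1$, which is the coefficient in \eqref{eq:V-to-W-inverse}.

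First, I need a Borel map $c$ on complex symmetric $m\times m$ matrices with $c(S_r)=C_r$, where $S_r=A_r^{\T}A_r$. By \eqref{eq:Cr-definition}, $(C_r)_j=\haf(A_{r,-j}^{\T}A_{r,-j})$. The matrix $A_{r,-j}^{\T}A_{r,-j}$ is just the principal submatrix of $S_r$ obtained by deleting row and column $j$. So I define $c(s)_j:=\haf(s_{-j,-j})$. This is a polynomial in the entries of $s$, hence continuous and Borel, and $c(S_r)=C_r$ identically.

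Second, I check the nondegeneracy hypothesis $\Pp\{c(S)=0\}=0$. For $r\ge2$, \cref{sec:preliminaries} already shows that $(C_r)_1$ is a nonzero polynomial in the entries of $A_r$. Its zero set is therefore Lebesgue null and hence Gaussian null, so $C_r\ne0$ almost surely.

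With these choices the quantities in \cref{thm:preserved-coordinate-inverse-variance} match the paper's energies:
\[
W=c(S)^*c(S)=\norm{C_r}^2=W_r,\qquad V=c(S)^*Qc(S)=C_r^*A_r^*A_rC_r=\norm{A_rC_r}^2=V_r.
\]
The theorem's conclusion, read as extended nonnegative expectations, is then $\mathcal V_{k,r}\le(k-4r+1)^{-1}\mathcal W_{k,r}$ in $[0,\infty]$, which is the corollary.

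There is no substantive obstacle here; all the analytic work sits in \cref{thm:preserved-coordinate-inverse-variance} and \cref{prop:bounded-preserved-S-score}. The only points needing care are bookkeeping. The column-count convention $m=2r-1$ must be used, and not the $2r$ columns of the full matrix. The $\mathcal V,\mathcal W$ of \eqref{eq:inverse-moment-definitions} are integrals against the same product Gaussian law $\mu_{k,r}$ as in the theorem. Finally, the inequality is meant in the extended sense, so no prior finiteness of $\mathcal W_{k,r}$ is needed.
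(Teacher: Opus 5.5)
Your proposal is correct and follows the paper's own argument: apply \cref{thm:preserved-coordinate-inverse-variance} with $m=2r-1$, $A=A_r$, $c(S)=C_r$, use the nonvanishing of $C_r$ from \cref{sec:preliminaries}, and note that $k-2m-1=k-4r+1$. Your explicit Borel map $c(s)_j=\haf(s_{-j,-j})$, the check that $k>2m+1$ is equivalent to $k\ge4r$, and the identification $V=\norm{A_rC_r}^2$ spell out details that the paper leaves implicit.
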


\begin{proof}
Apply \cref{thm:preserved-coordinate-inverse-variance} with
\(m=2r-1\), \(A=A_r\), and \(c(S)=C_r\).  The nonvanishing condition was
proved in \cref{sec:preliminaries}, and
\[
 k-2m-1=k-4r+1.
\]
The definitions in Eq.~\eqref{eq:inverse-moment-definitions} now give
Eq.~\eqref{eq:V-to-W-inverse}.
\end{proof}

\begin{remark}[Why the loss is tolerable]
The denominator \(k-4r+1\) costs \(\exp(O(r/k))\) at level \(r\).
{At finite $n$ and $k$, its cumulative effect over $r\le n$ is
$\exp(O(n^2/k))$.  Along an asymptotic sequence $k=k_n$, this is polynomial
under \eqref{eq:poly-regime}.}
\end{remark}

%% file: sections/05_main_proof.tex
\section{Inverse moment bounds, coefficient, and exact local power}
\label{sec:main-proof}

\subsection{Proof of Theorem~\ref{thm:main-finite}}
\label{sec:proof-main-finite}

The two estimates obtained in \cref{sec:fourier,sec:wishart} alternate between
cofactor energy and conditional variance.  Their denominators telescope into
an explicit product, so the induction closes without a least singular value
event.

The bounded comparison used at each level can be recorded directly.  For
$r\ge2$, $k\ge4r$, and $\delta>0$, combining
Eqs.~\eqref{eq:regularized-coupled-inverse-identity} and
\eqref{eq:cofactor-schur-cauchy} gives
\begin{equation}
 \E\frac{W_r}{(W_r+\delta)^2}
 \ge (k-4r+1)\E\frac{W_r^2}
 {V_r(W_r+\delta)^2}.
 \label{eq:regularized-wishart-comparison}
\end{equation}
Thus this regularized inequality is a consequence of the preserved coordinate
identity and Schur order, rather than an additional input.

\begin{proposition}[Inverse variance]
\label[proposition]{prop:inverse-variance}
For $n\ge1$ and $k\ge4n$,
\begin{equation}
  \E V_n^{-1}
  \le\frac1{k-1}
  \prod_{r=2}^{n}
  \frac1{(2r-2)(k-4r+1)}.
  \label{eq:inverse-variance-product}
\end{equation}
\end{proposition}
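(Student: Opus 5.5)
The bound will come from an induction on the level $r$, alternating the two inverse moment steps already proved. The base case is $r=1$. There $V_1=\norm{x_1}^2\sim\GammaLaw(k,1)$, so $\mathcal V_{k,1}=\E G_k^{-1}=1/(k-1)$, which is finite because $k\ge4n\ge4>1$.

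For the inductive step with $2\le r\le n$, I first check that the hypotheses of both steps hold. Since $k\ge4n\ge4r$, \cref{cor:self-coupled-direction} applies. Also $k\ge4r\ge2r-1$, so \cref{prop:fourier-to-laplace} applies. Chaining the two gives, in $[0,\infty]$,
\[
 \mathcal V_{k,r}\le\frac{1}{k-4r+1}\,\mathcal W_{k,r}
 \le\frac{1}{(k-4r+1)(2r-2)}\,\mathcal V_{k,r-1}.
\]
Both factors are finite and positive, since $k-4r+1\ge1$ and $2r-2\ge2$. The inequality is valid in extended arithmetic because each step was stated there. Iterating from $r=n$ down to $r=2$ and inserting the base value $\mathcal V_{k,1}=1/(k-1)$ yields \eqref{eq:inverse-variance-product}. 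This shows in particular that every $\mathcal V_{k,r}$ is finite, so writing $\E V_n^{-1}$ is legitimate.

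The only point needing care is bookkeeping. One must verify that the recursion levels use the same Gaussian law $\mu_{k,r}$ for the past columns: $\mathcal W_{k,r}$ and $\mathcal V_{k,r}$ are integrals over the $2r-1$ columns, while $\mathcal V_{k,r-1}$ is an integral over the first $2r-3$ columns. \cref{prop:fourier-to-laplace} is already stated in exactly these terms, so the chain of inequalities is between matching quantities. There is also the degenerate case $n=1$, where the product is empty and the claim reduces to the base case. No step here is a genuine obstacle; the substantive work sits in the two cited propositions.
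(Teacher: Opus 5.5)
Your proposal is correct and follows essentially the paper's own proof. The paper also uses the base case $V_1=\norm{x_1}^2\sim\GammaLaw(k,1)$, so $\E V_1^{-1}=1/(k-1)$. For each $2\le r\le n$ it chains \cref{prop:fourier-to-laplace} with \cref{cor:self-coupled-direction}, whose hypotheses hold because $k\ge4n$, and then iterates; your checks on the extended-valued inequalities and on matching the laws across levels are consistent with how the paper defines $\mathcal V_{k,r}$ and $\mathcal W_{k,r}$.
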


\begin{proof}
At level one,
\[
  V_1=\norm{x_1}^2\sim\GammaLaw(k,1),
  \qquad
  \E V_1^{-1}=\frac1{k-1}.
\]
For $2\le r\le n$, combine
\cref{prop:fourier-to-laplace,cor:self-coupled-direction} to obtain
\[
  \E V_r^{-1}
  \le\frac1{(2r-2)(k-4r+1)}\E V_{r-1}^{-1}.
\]
The condition $k\ge4n$ makes every application valid.  Iterating
this recursion from the displayed base case proves
\eqref{eq:inverse-variance-product}.
\end{proof}

\begin{proof}[Proof of \cref{thm:main-finite}]
By \eqref{eq:disk-to-inverse-moment}, every disk of radius
$\varepsilon\sigma_{k,n}$ has probability at most
\[
  \varepsilon^2\sigma_{k,n}^2\E V_n^{-1}.
\]
Use \cref{prop:exact-second-moment,prop:inverse-variance} and the identity
\[
  \frac{(2n-1)!!}{\prod_{r=2}^{n}(2r-2)}
  =2n\,4^{-n}\binom{2n}{n}
\]
to obtain $\sigma_{k,n}^2\E V_n^{-1}\le B_{k,n}$, where the exact product
form is
\begin{equation}
 B_{k,n}=b_n\frac{k}{k-1}
 \prod_{r=2}^{n}\frac{k+2r-2}{k-4r+1}.
 \label{eq:B-product}
\end{equation}
This proves the bound by $B_{k,n}\varepsilon^2$ in
\eqref{eq:main-bound}; the additional cap by one follows because every event
has probability at most one.  The Gamma conversion, the full range bound, the sharper finite estimate,
and the two sided remainder are collected in
Appendix~\ref{app:second-moment}.  They give
Eqs.~\eqref{eq:B-exact} through \eqref{eq:B-bridge} and complete the proof.
\end{proof}

\subsection{Local density and exact local power}

Gaussian scale mixture density formulas are classical
\cite{AndrewsMallows1974}.  The next results isolate the exact analytic
hypotheses needed for this transpose-gram hafnian mixture.  The dimension condition
$k\ge4n$ is retained only where the explicit bound $B_{k,n}$ is invoked.

\begin{corollary}[Mixture density and exact local power]
\label[corollary]{cor:exact-local-power}
Let $n,k\ge1$.  Assume that $V_n>0$ almost surely and that
$\E V_n^{-1}<\infty$.  With respect to planar Lebesgue measure, the law of
$H_{k,n}$ has the bounded continuous density
\begin{equation}
  f_{k,n}(w)
  =\frac1\pi\E\!\left[
    V_n^{-1}\exp\!\left(-\frac{\abs{w}^2}{V_n}\right)
  \right].
  \label{eq:gram-hafnian-density}
\end{equation}
The density is strictly positive, radial, and nonincreasing in $\abs{w}$.
Writing $\Lambda_{k,n}:=\sigma_{k,n}^2\E V_n^{-1}$, for every $z\in\C$,
\begin{equation}
 \begin{aligned}
  0<\pi\sigma_{k,n}^2 f_{k,n}(z)&\le\Lambda_{k,n}<\infty,\\
  \Lambda_{k,n}&\le B_{k,n}\quad\text{if }k\ge4n.
 \end{aligned}
 \label{eq:local-coefficient-bound}
\end{equation}
Moreover,
\begin{equation}
 \begin{aligned}
  &\lim_{\varepsilon\downarrow0}
  \frac{\Pp\{\abs{H_{k,n}-z}
    \le\varepsilon\sigma_{k,n}\}}{\varepsilon^2}\\
  &\qquad=\pi\sigma_{k,n}^2 f_{k,n}(z)\\
  &\qquad=\sigma_{k,n}^2\E\!\left[
    V_n^{-1}e^{-\abs{z}^2/V_n}
  \right]>0.
 \end{aligned}
 \label{eq:exact-local-power}
\end{equation}
\end{corollary}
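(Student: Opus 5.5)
The plan is to obtain everything from the conditional law in \cref{prop:conditional-mixture} at level $r=n$: given the past matrix $A_n$, $H_{k,n}\sim\CN(0,V_n)$. Since $V_n>0$ almost surely, the conditional law has density $(\pi V_n)^{-1}e^{-\abs w^2/V_n}$. For any Borel set $E\subset\C$, Tonelli's theorem then gives
\[
\Pp\{H_{k,n}\in E\}=\int_E\frac1\pi\E\!\left[V_n^{-1}e^{-\abs w^2/V_n}\right]\dd w,
\]
which identifies $f_{k,n}$ in \eqref{eq:gram-hafnian-density} as a density.

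Next come the qualitative properties. The integrand is bounded by $V_n^{-1}$, which is integrable by hypothesis. Dominated convergence therefore gives continuity of $f_{k,n}$, and also the bound $f_{k,n}\le\pi^{-1}\E V_n^{-1}$. Strict positivity holds because the integrand is strictly positive almost surely. Radiality and monotonicity in $\abs w$ are immediate from the integrand, pointwise in $V_n$.

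Multiplying the upper bound by $\pi\sigma_{k,n}^2$ gives $\pi\sigma_{k,n}^2f_{k,n}(z)\le\Lambda_{k,n}$. Finiteness of $\Lambda_{k,n}$ follows from $\sigma_{k,n}^2<\infty$ (\cref{prop:exact-second-moment}) together with the hypothesis $\E V_n^{-1}<\infty$. When $k\ge4n$, \cref{prop:inverse-variance} and the coefficient computation in the proof of \cref{thm:main-finite} give $\Lambda_{k,n}\le B_{k,n}$. In that regime the standing hypotheses hold automatically, by the almost sure positivity shown in \cref{sec:preliminaries}.

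For the exact local power, write the disk probability as $\int_{\abs{w-z}\le\varepsilon\sigma_{k,n}}f_{k,n}(w)\,\dd w$ and divide by $\varepsilon^2$. Continuity of $f_{k,n}$ at $z$ then makes this ratio converge to $\pi\sigma_{k,n}^2f_{k,n}(z)$, because the disk has area $\pi\varepsilon^2\sigma_{k,n}^2$. Substituting the formula for $f_{k,n}$ gives the final expression, which is positive by strict positivity.

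The only delicate step is the continuity of the density, where the domination by the integrable $V_n^{-1}$ is exactly what the hypothesis $\E V_n^{-1}<\infty$ supplies. Everything else is bookkeeping.
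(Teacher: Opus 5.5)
Your proposal is correct and follows essentially the same route as the paper. The conditional law $H_{k,n}\mid A_n\sim\CN(0,V_n)$ from \cref{prop:conditional-mixture} gives the mixture density, and the integrable majorant $\pi^{-1}V_n^{-1}$ gives boundedness and continuity. The clause $\Lambda_{k,n}\le B_{k,n}$ for $k\ge4n$ is the coefficient estimate from the proof of \cref{thm:main-finite}, and continuity of $f_{k,n}$ at $z$ gives the shrinking-disk limit.
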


\begin{proof}
The conditional Gaussian law and $V_n>0$ almost surely give the mixture in
Eq.~\eqref{eq:gram-hafnian-density}.  The integrable majorant
$\pi^{-1}V_n^{-1}$ gives boundedness and continuity.  The same formula gives
strict positivity, radiality, monotonicity, and
$f_{k,n}(0)=\pi^{-1}\E V_n^{-1}$.  This proves
Eq.~\eqref{eq:local-coefficient-bound}, except for its last clause.  When
$k\ge4n$, that clause is the coefficient estimate in the proof of
\cref{thm:main-finite}.  Integration over a shrinking disk and continuity at
$z$ give Eq.~\eqref{eq:exact-local-power}.
\end{proof}

The radial factorization and Wick angular obstruction are proved in
Appendix~\ref{app:angular-obstruction}. Their density consequence retains
the explicit inverse-integrability hypothesis; it does not determine the
sharp threshold below $k\ge4n$.

\begin{remark}[What the induction avoids]
The pointwise inequality
\[
  V_r=\norm{A_rC_r}^2
  \ge s_{\min}(A_r)^2W_r
\]
is true, but it inserts the worst singular direction even though $C_r$ is a
highly structured function of $A_r^{\T}A_r$.  Iterating a worst direction
bound accumulates a substantially larger loss.  Conditioning on $S_r$
instead exploits the actual cofactor measurability and produces the explicit
$\exp(O(n^2/k))$ factor in \eqref{eq:B-simple}.
\end{remark}

%% file: sections/06_conditioning_consequences.tex
{
\section{Conditioning consequences for Gaussian GBS intensities}
\label{sec:conditioning-consequences}

The main theorem has a direct normalized intensity form.  For $z\in\C$ set
\begin{equation}
 Q_z:=\frac{\abs{H_{k,n}-z}^2}{\sigma_{k,n}^2}.
 \label{eq:normalized-shifted-intensity}
\end{equation}
The shift is mathematically useful for interpolation and perturbed analytic
targets.  Here $z\ne0$ is an analytic shift, not an optical displacement
parameter.  Only $z=0$ is used for the ordinary zero displacement GBS weight
discussed below.

\begin{corollary}[Normalized intensity tail]
\label[corollary]{cor:normalized-intensity-tail}
For $n\ge1$, $k\ge4n$, $z\in\C$, and $t\ge0$,
\begin{equation}
 \Pp\{Q_z\le t\}\le \min\{1,B_{k,n}t\}.
 \label{eq:normalized-intensity-tail}
\end{equation}
\end{corollary}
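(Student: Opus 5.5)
The plan is to reduce the intensity event to a disk event and then apply \cref{thm:main-finite} directly. Since $\sigma_{k,n}>0$ by \cref{prop:exact-second-moment}, for every $t\ge0$ the event $\{Q_z\le t\}$ coincides with
\[
 \{\abs{H_{k,n}-z}^2\le t\,\sigma_{k,n}^2\}
 =\{\abs{H_{k,n}-z}\le \sqrt t\,\sigma_{k,n}\}.
\]
This is exactly the event controlled by \eqref{eq:main-bound} with radius parameter $\varepsilon=\sqrt t\ge0$.

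I then invoke \cref{thm:main-finite} under the stated hypotheses $n\ge1$ and $k\ge4n$, at the same center $z$. This gives
\[
\Pp\{Q_z\le t\}\le\min\{1,B_{k,n}(\sqrt t)^2\}=\min\{1,B_{k,n}t\},
\]
which is \eqref{eq:normalized-intensity-tail}.

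No real obstacle arises. The only points to check are three:
\begin{itemize}
\item the endpoint $t=0$ is permitted, since the theorem allows $\varepsilon=0$;
\item the squaring step is an equivalence of events because both sides are nonnegative;
\item the theorem's bound is uniform in the center, so the shift $z$ needs no separate treatment.
\end{itemize}
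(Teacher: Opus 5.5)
Your proposal is correct and is the same argument as the paper, which simply applies Eq.~\eqref{eq:main-bound} with $\varepsilon=\sqrt t$. Your three checks (positivity of $\sigma_{k,n}$, equivalence of the squared and unsquared events, and the endpoint $t=0$) are the details the paper leaves implicit.
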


\begin{proof}
Apply Eq.~\eqref{eq:main-bound} with $\varepsilon=\sqrt t$.
\end{proof}

To connect this statement to GBS notation, let $N=2n$, $K=k$, $M\ge K$, and
consider equal squeezing $\xi>0$ in the Gaussian surrogate without collisions.  Its
pattern weight can be written
\begin{equation}
 \begin{aligned}
  p_G&=\alpha_{M,K,N}(\xi)\abs{H_{K,n}}^2=p_{\rm ref}Z,\\
  \alpha_{M,K,N}(\xi)&:=
  \frac{\tanh(\xi)^N}{M^N\cosh(\xi)^K},\\
  p_{\rm ref}&:=\alpha_{M,K,N}(\xi)\sigma_{K,n}^2,\\
  Z&:=Q_0.
 \end{aligned}
 \label{eq:gaussian-gbs-weight-normalization}
\end{equation}
This is the standard hafnian normalization for the Gaussian surrogate
pattern weight without collisions
\cite{HamiltonEtAl2017GBS,KruseEtAl2019GBS}.  The established density implies
$p_G>0$ almost surely, so the relative error ratio below is defined almost
surely.  \begin{corollary}[Gaussian additive to relative transfer]
\label[corollary]{cor:additive-to-relative-transfer}
Let $n\ge1$, $K\ge4n$, $N=2n$, $M\ge K$, $\xi>0$, and $\eta\ge0$, with
$p_G,p_{\rm ref}$ as in Eq.~\eqref{eq:gaussian-gbs-weight-normalization}.
Let $\widetilde p$ be a possibly randomized estimate of $p_G$ and suppose the
joint law of the Gaussian instance and $\widetilde p$ satisfies
$\Pp\{\abs{\widetilde p-p_G}>\eta p_{\rm ref}\}\le\delta_{\rm add}$.
Then for every $\rho>0$,
\begin{equation}
 \Pp\!\left\{\abs{\widetilde p-p_G}>\rho p_G\right\}
 \le \min\!\left\{1,\delta_{\rm add}+B_{K,n}\frac{\eta}{\rho}\right\}.
 \label{eq:additive-to-relative-transfer}
\end{equation}
\end{corollary}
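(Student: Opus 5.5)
The plan is to split the bad event by whether the additive estimate fails, and to control the remaining part with the small-intensity tail in \cref{cor:normalized-intensity-tail} at $z=0$. Set $E_{\rm add}:=\{\abs{\widetilde p-p_G}>\eta p_{\rm ref}\}$, so $\Pp(E_{\rm add})\le\delta_{\rm add}$ by hypothesis. On the complement, the relative-error event $\{\abs{\widetilde p-p_G}>\rho p_G\}$ can occur only if
\[
\rho p_G<\abs{\widetilde p-p_G}\le\eta p_{\rm ref}.
\]
Dividing by $p_{\rm ref}>0$ and using $p_G=p_{\rm ref}Z$ from Eq.~\eqref{eq:gaussian-gbs-weight-normalization}, this forces $Z<\eta/\rho$.

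Positivity of $p_{\rm ref}$ needs a short check. We have $\alpha_{M,K,N}(\xi)>0$ because $\xi>0$ gives $\tanh\xi>0$, and $\sigma_{K,n}^2>0$ by \cref{prop:exact-second-moment}. The ratio in the event is well defined almost surely, since the density in \cref{cor:exact-local-power} gives $p_G>0$ almost surely. That corollary applies because $K\ge4n$ ensures $V_n>0$ almost surely, as shown in \cref{sec:preliminaries}, and \cref{prop:inverse-variance} gives $\E V_n^{-1}<\infty$.

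The main step is the union bound
\[
\Pp\{\abs{\widetilde p-p_G}>\rho p_G\}\le\Pp(E_{\rm add})+\Pp\{Z\le\eta/\rho\}\le\delta_{\rm add}+B_{K,n}\,\eta/\rho.
\]
The last inequality is \cref{cor:normalized-intensity-tail} with $z=0$, $k=K$ and $t=\eta/\rho$, using $Z=Q_0$. This step uses only the marginal law of the Gaussian instance, which is fixed by the model. The possibly randomized estimator affects only the joint law, and that enters solely through $\delta_{\rm add}$, so no independence between $\widetilde p$ and the instance is needed.

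The cap by one holds trivially because the left side is a probability. I expect no real obstacle here; the only care needed is measurability of the events under the joint law and the edge case $\eta=0$. In that case the bound reads $\delta_{\rm add}$, which is consistent, since $\Pp\{Z\le0\}=0$ by the density.
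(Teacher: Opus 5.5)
Your proposal is correct and is the paper's argument: on the complement of the additive failure event, relative error above $\rho$ forces $Z<\eta/\rho$, and Corollary~\ref{cor:normalized-intensity-tail} at $z=0$ plus a union bound gives the claim. Your positivity and $\eta=0$ checks are harmless but not needed, because the event $\{\abs{\widetilde p-p_G}>\rho p_G\}$ involves no division.
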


\begin{proof}
Outside the additive failure event, relative error larger than $\rho$ implies
$Z<\eta/\rho$.  Apply Eq.~\eqref{eq:normalized-intensity-tail} at $z=0$
and take a union bound.
\end{proof}

Further consequences for fractional inverse moments and normalized
denominator resolution are given in Appendix~\ref{app:conditioning-details}.
These consequences isolate a conditioning step; they do not supply an
estimator or a complexity reduction.

}

%% file: sections/07_discussion.tex
\section{The unresolved small-ball transition}
\label{sec:polynomial-small-ball-transition}
Theorems~\ref{thm:main-finite} and
\ref{thm:symmetric-gaussian-anticoncentration} establish local
anticoncentration for the finite transpose Gram law and its independent
symmetric limit. Corollary~\ref{cor:historical-lower-tail} makes the
historical Gaussian lower-tail requirement quantitative, while
Corollary~\ref{cor:finite-haar-companion} transfers the finite theorem to
an ideal interferometer using the companion's hiding result.

The necessary and sufficient growth condition for local small-ball control
in the finite Gram ensemble is still open. For an integer sequence $k_n$, the question is whether
for every $A>0$ there exists $\gamma>0$ such that, eventually,
\begin{equation}
 \sup_{z\in\C}\Pp\{|H_{k_n,n}-z|\le n^{-\gamma}\sigma_{k_n,n}\}
       \le n^{-A}.
 \label{eq:open-transition-property}
\end{equation}
Corollary~\ref{cor:polynomial} proves this when
$n^2/k_n=O(\log n)$. It does not prove the necessity of that condition.
The restriction $k\ge4n$ is the range in which our inverse-moment
recursion closes, not a known statistical or computational threshold.

For the distinct moment criterion
$(\E|H_{k,n}|^2)^2/\E|H_{k,n}|^4\ge n^{-O(1)}$,
the $n^2/\log n$ transition is established
\cite{ZhaoExactMoments2026,ShouEtAl2026AnticoncentrationEntanglement}.
That result does not provide a necessity argument for the center-uniform
small-ball property in Eq.~\eqref{eq:open-transition-property}.

The radial and angular factorization in
Lemma~\ref{lem:radial-angular-factorization} isolates the small-$k$
obstruction. Theorem~\ref{thm:wick-angular-obstruction} proves an
exponential lower bound for the extended angular inverse-moment factor
when $n\ge1000$ and $2\le k\le n/3$. Its density conclusion retains
both positivity and inverse-integrability assumptions. An exponentially
large density coefficient at the origin does not, without a uniform
neighborhood estimate, settle Eq.~\eqref{eq:open-transition-property} at
inverse polynomial radii. Consequently there is no proved sharp transition
between that obstruction and the sufficient $n^2/\log n$ regime.

Another question is whether the preserved-coordinate inequality can be
sharpened for the hafnian cofactor direction. Its present denominator
$k-4r+1$ contributes an accumulated $\exp(O(n^2/k))$ loss. A sharper
self-coupled comparison could enlarge the polynomial small-ball regime
without appealing to a worst singular direction.

The scope of the perturbation theorems is also specific. The perturbation
is independent of the Gaussian matrix, and a scalar center is an analytic
shift, not an optical displacement. Adaptive shifts, displaced GBS and loop
hafnians, losses, and matrix discretization require separate results.
Finally, the average-case hafnian estimation conjectures and the
algorithmic parts of sampling reductions remain open inputs where they
are invoked \cite{HamiltonEtAl2017GBS,KruseEtAl2019GBS,DeshpandeEtAl2022,HangleiterEisert2023}.
The analytic lower-tail theorem removes an anticoncentration assumption
for the ensembles and regimes proved here; it does not remove those
computational assumptions.

%% file: sections/appendix_fourier_algebra.tex
\section{Fourier cofactor algebra and realification}
\label{app:fourier-algebra}

\begin{figure}[htbp]
  \centering
  \includegraphics[width=0.98\textwidth]{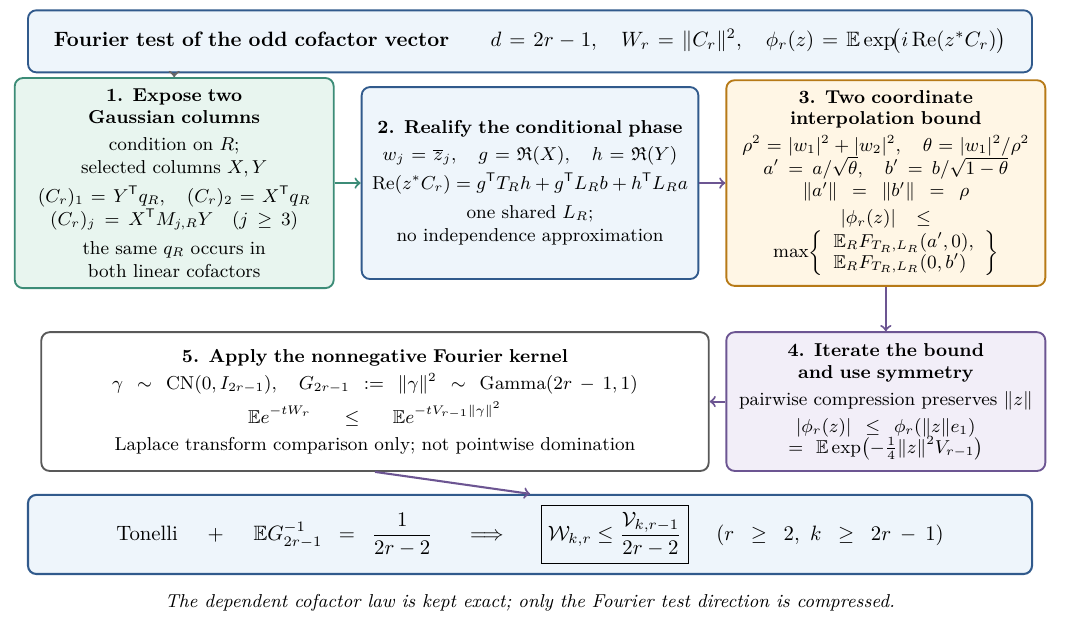}
  \caption{Fourier compression of odd hafnian cofactors.
  Conditioning on the remaining columns $R$ makes the first two cofactors
  share $q_R$, while the other cofactors are bilinear in the selected
  Gaussian columns $X$ and $Y$.  Realification therefore produces one
  Gaussian bilinear phase with the same map $L_R$ in both endpoint terms.
  The green arrow represents an exact identity.  Geometric interpolation and
  H\"older's inequality then bound two Fourier coefficients by an endpoint
  while preserving their squared Euclidean norm; the purple arrows represent
  inequalities, not distributional substitutions.  Iteration and symmetry
  give the radial characteristic function bound.  A nonnegative Gaussian
  Fourier kernel, Tonelli's theorem, and the Gamma inverse moment yield
  Eq.~\eqref{eq:W-inverse-recursion}.  The interpolation and positive kernel
  stages adapt Ref.~\cite{KoehlerLeung2026}; the shared map expansion is
  specific to the Gram hafnian.}
  \label{fig:fourier-cofactor-compression}
\end{figure}

The positive kernel compression and geometric interpolation strategy follows
Koehler and Leung
~\cite[Lemmas~3.1, 3.2, and~3.4 and Proposition~3.5]{KoehlerLeung2026}, while
complex realification is standard.  Compared with that permanent setting,
the new algebraic point is the Gram hafnian bilinear cofactor
expansion: it produces the same linear map in both selected coordinates and
therefore preserves the exact phase and scale required by the interpolation.

The next interpolation identity is the real Gaussian calculation in
Ref.~\cite[Lemma~3.2]{KoehlerLeung2026}, stated here with the shared map needed for our cofactors.

\begin{lemma}[Bilinear Gaussian interpolation]
\label[lemma]{lem:bilinear-interpolation}
Let $g,h$ be independent standard real Gaussian vectors in $\R^d$.  For real
matrices $T\in\R^{d\times d}$ and $L\in\R^{d\times2}$ and vectors
$a,b\in\R^2$, set
\begin{equation}
  F_{T,L}(a,b)
  :=\E\exp i\left(
    g^{\T}Th+g^{\T}Lb+h^{\T}La
  \right).
\end{equation}
Put $D=I+T^{\T}T$, choose an orthonormal eigenbasis
$(e_j)_{j=1}^d$ with
$De_j=\lambda_j e_j$, and set $q=T^{\T}Lb$.  Then every
$\lambda_j>0$ and the exact completed square formula is
\begin{align}
 F_{T,L}(a,b)
 &=\exp\!\left(-\frac{\norm{Lb}^2}{2}\right)
   \left(\prod_{j=1}^d\lambda_j^{-1/2}\right)\notag\\
 &\quad\times
   \exp\!\left\{
     \sum_{j=1}^d
     \frac{\bigl(-\ip{e_j}{q}+i\ip{e_j}{La}\bigr)^2}
          {2\lambda_j}
   \right\}.
\end{align}
In particular, $F_{T,L}(a,0)$ and $F_{T,L}(0,b)$ are strictly positive and,
for $0\le\theta\le1$,
\begin{equation}
  \abs{F_{T,L}(\sqrt\theta\,a,\sqrt{1-\theta}\,b)}
  =F_{T,L}(a,0)^\theta F_{T,L}(0,b)^{1-\theta}.
\end{equation}
\end{lemma}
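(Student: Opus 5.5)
The plan is to evaluate $F_{T,L}(a,b)$ exactly by integrating out $g$ and then $h$. After that, the interpolation identity reduces to checking how the real part of the exponent scales.

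\emph{Integrating out $g$.} First condition on $h$. The phase is then $g^{\T}v+h^{\T}La$ with $v:=Th+Lb$. The real Gaussian characteristic function gives $\E_g e^{ig^{\T}v}=e^{-\norm v^2/2}$, so
\[
 F_{T,L}(a,b)=\E_h\exp\!\Bigl(-\tfrac12\norm{Th+Lb}^2+ih^{\T}La\Bigr).
\]
This step is justified by Fubini, since the integrand has modulus one.

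\emph{Integrating out $h$.} Expanding $\norm{Th+Lb}^2=h^{\T}T^{\T}Th+2h^{\T}q+\norm{Lb}^2$ and combining with the density $(2\pi)^{-d/2}e^{-\norm h^2/2}$ gives the exponent
\[
 -\tfrac12 h^{\T}Dh+h^{\T}w-\tfrac12\norm{Lb}^2,\qquad w:=-q+iLa.
\]
Since $D=I+T^{\T}T\succeq I$, every $\lambda_j\ge1>0$. I would now use the Gaussian integral with complex linear term,
\[
 (2\pi)^{-d/2}\int e^{-\frac12h^{\T}Dh+h^{\T}w}\,\dd h=(\det D)^{-1/2}e^{\frac12w^{\T}D^{-1}w}.
\]
This holds for real $w$ by completing the square. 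It extends to complex $w$ because both sides are entire in $w$, and the left side converges absolutely since $D\succ0$. Diagonalizing in the eigenbasis $(e_j)$ gives $w^{\T}D^{-1}w=\sum_j\ip{e_j}{w}^2/\lambda_j$ and $\det D=\prod_j\lambda_j$, which is the displayed completed-square formula.

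\emph{Positivity and interpolation.} Write $q_j:=\ip{e_j}{q}$ and $p_j:=\ip{e_j}{La}$.
\begin{itemize}
\item For $b=0$ we have $q=0$, so the exponent is $-\sum_jp_j^2/(2\lambda_j)$, which is real. Hence $F_{T,L}(a,0)>0$.
\item For $a=0$ the exponent is $-\norm{Lb}^2/2+\sum_jq_j^2/(2\lambda_j)$, again real. Hence $F_{T,L}(0,b)>0$.
\end{itemize}
For general $(a,b)$, take the modulus via $\abs{e^z}=e^{\operatorname{Re}z}$ and use $\operatorname{Re}(-q_j+ip_j)^2=q_j^2-p_j^2$. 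Substituting $a\mapsto\sqrt\theta\,a$ and $b\mapsto\sqrt{1-\theta}\,b$ scales $p_j^2$ by $\theta$, and scales both $q_j^2$ and $\norm{Lb}^2$ by $1-\theta$. The real exponent is therefore $\theta$ times the $b=0$ exponent plus $(1-\theta)$ times the $a=0$ exponent. The prefactor $\prod_j\lambda_j^{-1/2}$ does not depend on $(a,b)$, so it splits trivially as its $\theta$ power times its $(1-\theta)$ power. Multiplying these pieces gives $\abs{F_{T,L}(\sqrt\theta\,a,\sqrt{1-\theta}\,b)}=F_{T,L}(a,0)^\theta F_{T,L}(0,b)^{1-\theta}$.

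\emph{Main obstacle.} The only non-routine step is the complex-shift Gaussian integral. I would handle it by analytic continuation in $w$ as above; alternatively, one can integrate coordinatewise in the eigenbasis and use the one-dimensional formula $\int e^{-\lambda x^2/2+wx}\,\dd x=\sqrt{2\pi/\lambda}\,e^{w^2/(2\lambda)}$ for complex $w$.
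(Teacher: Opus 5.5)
Your proposal is correct and follows the paper's proof essentially step for step. You integrate out $g$ to get $\exp(-\norm{Th+Lb}^2/2)$, complete the square in $h$ with the complex linear term $-q+iLa$, and read off the interpolation identity because the real exponent splits into a part scaling with $\theta$ and a part scaling with $1-\theta$ over the common prefactor $\prod_j\lambda_j^{-1/2}$. The only differences are minor: the paper evaluates the complex-shift integral by factoring in the eigenbasis with the one-dimensional formula, which is your stated alternative, rather than by multivariate analytic continuation; and your explicit remark that $\lambda_j\ge1$ supplies the positivity claim the paper leaves implicit.
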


\begin{proof}
Condition first on $h$.  Gaussian integration in $g$ replaces its phase by
$\exp(-\norm{Th+Lb}^2/2)$.  Including the standard Gaussian density of $h$
leaves
\begin{align}
 F_{T,L}(a,b)
 &=\exp\!\left(-\frac{\norm{Lb}^2}{2}\right)(2\pi)^{-d/2}\notag\\
 &\quad\times\int_{\R^d}
 e^{-h^{\T}Dh/2+\ip{h}{-q+iLa}}\,\dd h.
\end{align}
Write $h=\sum_j x_j e_j$.  The integral factors.  For $\lambda>0$ and
$s\in\C$, applying
\begin{equation}
 (2\pi)^{-1/2}\int_{\R}
 e^{-\lambda x^2/2+s x}\dd x
 =\lambda^{-1/2}e^{s^2/(2\lambda)}
\end{equation}
to each coordinate gives the stated spectral formula.

For the endpoint and interpolation claims, put
\begin{equation}
 P:=\prod_{j=1}^d\lambda_j^{-1/2},\qquad
 u_j:=\ip{e_j}{La},\qquad v_j:=\ip{e_j}{q}.
\end{equation}
Expanding $(-v_j+iu_j)^2$ shows that
\begin{align}
 &F_{T,L}(a,b)\notag\\
 &\quad=P\exp\!\left\{-\frac12\left(
   \norm{Lb}^2-\sum_j\frac{v_j^2}{\lambda_j}
   +\sum_j\frac{u_j^2}{\lambda_j}\right)\right\}\notag\\
 &\quad\times
 \exp\!\left(-i\sum_j\frac{u_jv_j}{\lambda_j}\right).
\end{align}
At either endpoint the phase vanishes, so $F_{T,L}(a,0)$ and
$F_{T,L}(0,b)$ are positive real numbers.  Under
$(a,b)\mapsto(\sqrt\theta\,a,\sqrt{1-\theta}\,b)$, the two real quadratic
terms scale by $\theta$ and $1-\theta$, respectively, whereas their common
positive prefactor is $P$.  Taking absolute values proves the geometric
identity.
\end{proof}

\begin{proof}[Proof of \cref{prop:cofactor-compression}]
We first compress two nonzero coordinates.  Put $d=2r-1$, relabel the
corresponding columns of $A_r$ as $X,Y\in\C^k$, and write the remaining
columns as $R=(r_a)_{a\in I}$, where $I=\{3,\ldots,d\}$.  For
$J\subseteq I$ of even cardinality, let
\begin{equation}
 h(J):=\haf(R_J^{\T}R_J),
 \qquad h(\varnothing)=1.
\end{equation}
Define
\begin{equation}
 q_R:=\sum_{a\in I}h(I\setminus\{a\})r_a
\end{equation}
and, for $j\in I$,
\begin{equation}
 M_{j,R}:=h(I\setminus\{j\})I_k
 +\sum_{\substack{a,b\in I\setminus\{j\}\\a\ne b}}
 h(I\setminus\{j,a,b\})r_ar_b^{\T}.
\end{equation}
Direct hafnian expansion then gives
\begin{equation}
  \begin{aligned}
  (C_r)_1&=Y^{\T}q_R,
  & (C_r)_2&=X^{\T}q_R,\\
  (C_r)_j&=X^{\T}M_{j,R}Y
  &&(j\ge3).
  \end{aligned}
\end{equation}
For the first identity, expand the even hafnian defining $(C_r)_1$ along its
unique displayed column $Y$; the coefficient of $Y^{\T}r_a$ is precisely
$h(I\setminus\{a\})$.  The second identity is identical.  For $j\ge3$, the
matchings that pair $X$ to $Y$ contribute
$h(I\setminus\{j\})X^{\T}Y$.  If $X$ is paired to $r_a$ and $Y$ to $r_b$,
with $a\ne b$, the contribution is
\begin{equation}
 h(I\setminus\{j,a,b\})(X^{\T}r_a)(Y^{\T}r_b).
\end{equation}
Summing these alternatives gives the displayed formula for $M_{j,R}$ and the
third cofactor identity.

Put $w_j=\overline{z_j}$.  Conditional on $R$, realify
complex vectors by
\begin{equation}
 \mathfrak R(x):=\sqrt2
 \begin{pmatrix}\operatorname{Re}x\\ \operatorname{Im}x\end{pmatrix}
 \in\R^{2k}.
\end{equation}
Thus $g=\mathfrak R(X)$ and $h=\mathfrak R(Y)$ are independent standard real
Gaussian vectors.  If $q=q_0+iq_1$, set
\begin{equation}
 L(q):=\frac1{\sqrt2}
 \begin{pmatrix}q_0&-q_1\\-q_1&-q_0\end{pmatrix}
 \in\R^{2k\times2}.
\end{equation}
For $w=u+iv$ and $M=M_0+iM_1$, set
\begin{equation}
 \mathcal T(w,M):=\frac12
 \begin{pmatrix}
  uM_0-vM_1&-uM_1-vM_0\\
  -uM_1-vM_0&-uM_0+vM_1
 \end{pmatrix}.
\end{equation}
A direct expansion of real and imaginary parts gives the exact identities
\begin{align}
 \operatorname{Re}(w x^{\T}q)
 &=\mathfrak R(x)^{\T}L(q)\binom uv,
 \\
 \operatorname{Re}(w x^{\T}My)
 &=\mathfrak R(x)^{\T}\mathcal T(w,M)\mathfrak R(y).
\end{align}
Now take
\begin{equation}
 \begin{aligned}
 a&=\binom{\operatorname{Re}w_1}{\operatorname{Im}w_1},
 &\qquad
 b&=\binom{\operatorname{Re}w_2}{\operatorname{Im}w_2},\\
 L_R&=L(q_R),
 &
 T_R&=\sum_{j=3}^{d}\mathcal T(w_j,M_{j,R}).
 \end{aligned}
\end{equation}
Then $\norm a=\abs{w_1}$, $\norm b=\abs{w_2}$, and
the preceding cofactor and realification formulas show that the conditional
real phase $\operatorname{Re}(z^*C_r)$ is exactly
\begin{equation}
  g^{\T}T_Rh+g^{\T}L_Rb+h^{\T}L_Ra.
\end{equation}
In particular, the same matrix $L_R$ occurs in both linear terms, with no
hidden rescaling, because both endpoint cofactors use the same vector $q_R$.
This is the structural point that permits
\cref{lem:bilinear-interpolation} to apply while preserving the Euclidean norm
of the two Fourier coefficients.

Let $\rho^2=\abs{w_1}^2+\abs{w_2}^2$ and suppose both terms are nonzero.  Set
$\theta=\abs{w_1}^2/\rho^2$, $a'=a/\sqrt\theta$, and
$b'=b/\sqrt{1-\theta}$.  Thus $\norm{a'}=\norm{b'}=\rho$.  By
\cref{lem:bilinear-interpolation}, the triangle inequality, and H\"older's
inequality,
\begin{align}
  \abs{\phi_r(z)}
  &\le \E_R\abs{F_{T_R,L_R}(a,b)}\notag\\
  &\le
  \bigl(\E_R F_{T_R,L_R}(a',0)\bigr)^\theta
  \bigl(\E_R F_{T_R,L_R}(0,b')\bigr)^{1-\theta}\notag\\
  &\le \max\left\{
    \E_R F_{T_R,L_R}(a',0),
    \E_R F_{T_R,L_R}(0,b')
  \right\}.
\end{align}
The two quantities on the last line are characteristic functions obtained by
deleting one selected coordinate and replacing the other by a coefficient of
modulus $\rho$.  They are nonnegative because their conditional endpoint
values are strictly positive.  If one of the original coordinates vanishes,
the same conclusion is immediate.

Iterating this two coordinate estimate leaves one nonzero coordinate
and preserves $\norm z$.  The coordinates of $C_r$ are exchangeable.
Moreover, multiplying every column of $A_r$ by $e^{i\vartheta}$ multiplies
$C_r$ by $e^{i(2r-2)\vartheta}$.  Since the law of $A_r$ is invariant under
the former multiplication and the power map on the unit circle is onto, the
law of $C_r$ is invariant under every common phase.  The surviving
coordinate may therefore be placed first and its coefficient made real and
nonnegative.  This proves
\begin{equation}
  \abs{\phi_r(z)}\le\phi_r(\norm z e_1).
\end{equation}

Finally, $(C_r)_1$ has the same law as $H_{k,r-1}$.  The fixed past
pushforward law in \cref{prop:conditional-mixture} gives its singleton
characteristic function as
\begin{equation}
  \phi_r(t e_1)=\E e^{-\abs t^2V_{r-1}/4},
\end{equation}
which proves the last equality in \eqref{eq:cofactor-compression}.
\end{proof}

%% file: sections/appendix_wishart_boundary.tex
\section{Unsliced Wishart score and cutoff proof}
\label{app:wishart-boundary}

Haff's identity supplies the classical Wishart integration by parts
framework~\cite{Haff1979}.  Here we derive the fixed transpose-gram
specialization used in \cref{cor:self-coupled-direction} directly in Gaussian
matrix coordinates.  The singular lifting field, cutoff limit, and subsequent
bounded Borel extension are proved here rather than quoted as
a theorem of Haff.  This route avoids differentiating a conditional
Wishart density on a moving domain.

Write $A=X+iY\in\C^{k\times m}$ and
$R=[X\;Y]\in\R^{k\times 2m}$.  The entries of $R$ are independent real
Gaussians with density proportional to $e^{-x^2}$.  Set
\[
  \begin{aligned}
    M&=R^{\T}R, & Q&=A^*A,\\
    S&=A^{\T}A, &
    K&=\begin{pmatrix}Q&\overline S\\S&\overline Q\end{pmatrix}.
  \end{aligned}
\]

\begin{lemma}[Unsliced preserved $S$ score]
\label[lemma]{lem:wishart-boundary-regularity}
Suppose $k>2m+1$ and put $\alpha=(k-2m-1)/2$.  Let $H=H_0+iH_1$ be
Hermitian, with $H_0^{\T}=H_0$ and $H_1^{\T}=-H_1$.  For every continuously
differentiable compactly supported real test function $\psi$ on the real
vector space of complex symmetric matrices, both terms below are integrable
and
\[
  \E\!\left[\psi(S)\left(
    \alpha\tr(M^{-1}\Delta(H))-\tr(\Delta(H))
  \right)\right]=0,
\]
where
\[
 \Delta(H)=\frac12
 \begin{pmatrix}H_0&H_1\\-H_1&H_0\end{pmatrix}.
\]
\end{lemma}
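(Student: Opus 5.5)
The plan is to prove the identity by Gaussian integration by parts in the real matrix coordinates $R\in\R^{k\times2m}$. The test is a \emph{lifting vector field} whose flow changes $M=R^{\T}R$ only in the fiber direction $\Delta(H)$. Set
\[
 F(R):=R\,M^{-1}\Delta(H),
\]
defined wherever $R$ has full column rank, hence $\mu_{k,m}$-almost surely. The first step is the deterministic computation of the derivative of $M$ along $F$:
\[
 \partial_F M=F^{\T}R+R^{\T}F=\Delta M^{-1}M+MM^{-1}\Delta=2\Delta(H).
\]
By \cref{prop:transpose-gram-fiber-tangent}, $\partial_F S=\partial_{2\Delta(H)}\mathfrak s=0$. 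Hence $\partial_F\bigl(\psi(S)\bigr)=0$: the test function is constant along the field. This is exactly why $F$ is a lifting of the fiber direction.

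The second step computes the two remaining terms of the integration by parts formula. The density is proportional to $e^{-\norm R^2}$, so for a nice field $G$ we have $\E[\operatorname{div}G-2\tr(R^{\T}G)]=0$. For $G=\psi(S)F$, the term $\tr(R^{\T}F)=\tr(MM^{-1}\Delta)=\tr\Delta(H)$ is immediate. The divergence is the classical Wishart computation. The term $\sum_{ij}\partial_{R_{ij}}$ acting on the explicit factor $R$ gives $k\tr(M^{-1}\Delta)$. Differentiating $M^{-1}$, using $\partial M^{-1}=-M^{-1}(\partial M)M^{-1}$ with $\partial_{R_{ij}}M=E_{ji}R+R^{\T}E_{ij}$, contributes $-(2m+1)\tr(M^{-1}\Delta)$, since $\Delta$ is symmetric of size $p=2m$. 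Thus
\[
 \operatorname{div}F=(k-2m-1)\tr\!\left(M^{-1}\Delta(H)\right)=2\alpha\tr\!\left(M^{-1}\Delta(H)\right).
\]
Combining the two steps gives $\E[\psi(S)(2\alpha\tr(M^{-1}\Delta)-2\tr\Delta)]=0$, which is the claim after dividing by two.

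Integrability needs two facts. First, $\abs{\tr(M^{-1}\Delta)}\le\norm{\Delta}\tr M^{-1}$, and $\E\tr M^{-1}=2m/(k-2m-1)<\infty$ precisely when $k>2m+1$; this is the inverse real Wishart mean. Second, $\norm F\le\norm{M^{-1/2}}\norm\Delta$ pointwise, so $\norm F$ is integrable as well.

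The main obstacle is rigor at the singular set $\{\det M=0\}$, where $F$ blows up, because the naive integration by parts is not licensed there. I would insert a smooth cutoff $\chi_\varepsilon(R)=\chi(\lambda_{\min}(M)/\varepsilon)$, or a smooth function of $\det M$ to avoid eigenvalue nonsmoothness, together with a large-norm cutoff. Applying the exact Gaussian integration by parts to the compactly supported smooth field $\chi_\varepsilon\psi(S)F$ then produces one extra boundary term, $\E[\psi(S)\,\partial_F\chi_\varepsilon]$.

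Controlling that term is where the effort goes. Using $\partial_F M=2\Delta$, the derivative $\partial_F\chi_\varepsilon$ is bounded by $C\varepsilon^{-1}\norm\Delta$ and supported on $\{\lambda_{\min}(M)\le\varepsilon\}$. So the boundary term is $O(\varepsilon^{-1}\Pp\{\lambda_{\min}(M)\le\varepsilon\})$. The real Wishart small-eigenvalue estimate $\Pp\{\lambda_{\min}\le\varepsilon\}=O(\varepsilon^{(k-2m+1)/2})$ makes this vanish as $\varepsilon\to0$ whenever $k>2m+1$.

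The main terms then converge by dominated convergence, using the integrable majorants above. Removing the large-norm cutoff is routine because of the Gaussian tails.
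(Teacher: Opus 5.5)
Your proposal is correct. Its core is the same as the paper's: the lifting field $F=R\,M^{-1}\Delta(H)$ (the paper's $\mathcal F_D$ is half of yours), the identities $\partial_F M=2\Delta(H)$, $\operatorname{div}F=(k-2m-1)\tr(M^{-1}\Delta(H))$ and $\langle R,F\rangle_{\mathrm F}=\tr\Delta(H)$, Gaussian integration by parts against $e^{-\norm{R}^2}$, a large-norm cutoff, and domination by $\tr(M^{-1})$. The genuine difference is how you handle the singular set $\{\det M=0\}$.

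The paper never cuts near that set. It replaces $M^{-1}$ by $(M+\eta I)^{-1}$, so its field is globally smooth and no boundary layer appears. The cost is that the regularized field no longer preserves $S$ exactly. The paper must therefore show that the $\psi$-derivative term vanishes as $\eta\downarrow0$, and it must dominate the messier divergence $\frac12[(k-\tr P_\eta)\tr(B_\eta D)-\tr(B_\eta MB_\eta D)]$ by $\frac{k+1}{2}\norm{D}_{\mathrm{op}}\tr(M^{-1})$.

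You keep the exact field, so $\partial_F\psi(S)=0$ and the clean Wishart divergence hold identically on the support. You pay with the boundary term $\E[\psi(S)\,\partial_F\chi_\varepsilon]$. You also pay with a cutoff $\chi(\lambda_{\min}(M)/\varepsilon)$ that is only Lipschitz, so you should cite integration by parts for compactly supported Lipschitz fields; this is standard. The smooth alternative $\chi(\det M/\varepsilon)$ also works, combined with the large-norm cutoff. Its derivative along $F$ is $2\varepsilon^{-1}\chi'(\det M/\varepsilon)\det M\,\tr(M^{-1}\Delta(H))$. This is not $O(\varepsilon^{-1})$, but on the support of $\chi'$ it is dominated by $C\norm{\Delta(H)}_{\mathrm{op}}\tr(M^{-1})$.

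Two small corrections.
\begin{itemize}
\item For entries of variance $1/2$ one has $\E M^{-1}=\frac{2}{k-2m-1}I_{2m}$, so $\E\tr(M^{-1})=4m/(k-2m-1)$, not $2m/(k-2m-1)$. Only finiteness is used, so nothing breaks.
\item The real Wishart small-eigenvalue tail is unnecessary. On the support of $\chi'(\lambda_{\min}(M)/\varepsilon)$ one has $\varepsilon^{-1}\le c\,\lambda_{\min}(M)^{-1}$. Hence the boundary term is at most $C\norm{\Delta(H)}_{\mathrm{op}}\,\E\bigl[\lambda_{\min}(M)^{-1}\mathbf 1\{\lambda_{\min}(M)\le c'\varepsilon\}\bigr]$. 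This tends to zero by dominated convergence, because $\lambda_{\min}(M)^{-1}\le\tr(M^{-1})$ is integrable and $\lambda_{\min}(M)>0$ almost surely.
\end{itemize}
With this change your route uses exactly the same inverse-Wishart input as the paper's, and the threshold $k>2m+1$ enters only through $\E\tr(M^{-1})<\infty$.
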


\begin{proof}
Put $D=\Delta(H)$.  The realification identities give
\[
  \begin{aligned}
    \partial_DQ&=H, & \partial_DS&=0,\\
    \tr D&=\tr H.
  \end{aligned}
\]

Work first on the full rank set.  The matrix field
\[
  \mathcal F_D(R)=\frac12 R\,(R^{\T}R)^{-1}D
\]
lifts the direction $D$: its first variation under the Gram map
$R\mapsto R^{\T}R$ is exactly $D$.  Direct differentiation in the $2mk$
real matrix coordinates gives
\begin{equation}
 \begin{aligned}
 R^{\T}\mathcal F_D(R)+\mathcal F_D(R)^{\T}R&=D,\\
 \operatorname{div}\mathcal F_D(R)
 &=\frac{k-2m-1}{2}\tr(M^{-1}D),\\
 2\ip{R}{\mathcal F_D(R)}_{\mathrm F}&=\tr D.
 \end{aligned}
 \label{eq:lifted-field-identities}
\end{equation}
The first identity and $\partial_DS=0$ show that the derivative of
$\psi(S(R))$ along this field vanishes.

We next justify using this singular field.  For $\eta>0$, set
\[
 \begin{aligned}
 \mathcal F_{D,\eta}(R)&:=\frac12R(M+\eta I)^{-1}D,\\
 P_\eta&:=M(M+\eta I)^{-1}.
 \end{aligned}
\]
Choose a smooth radial cutoff $\chi_L$ that equals one on
$\{\norm{R}_{\mathrm F}\le L\}$ and vanishes outside
$\{\norm{R}_{\mathrm F}\le2L\}$.  We use the following Gaussian divergence
identity in finite dimensions.  If $u\in\R^d$, if
$\gamma_{1/2}^{\otimes d}$ is the product $N(0,1/2)$ law, and if the scalar
components $F_j$ and their coordinate derivatives satisfy the usual slice
differentiability and integrability hypotheses, then
\begin{equation}
 \begin{aligned}
 &\int_{\R^d}\sum_{j=1}^d \partial_jF_j(u)\,
   \dd\gamma_{1/2}^{\otimes d}(u)\\
 &\quad=\int_{\R^d}\sum_{j=1}^d 2u_jF_j(u)\,
   \dd\gamma_{1/2}^{\otimes d}(u).
 \end{aligned}
 \label{eq:cutoff-gaussian-ibp}
\end{equation}
This identity is field independent.  Vectorize the $2mk$
real entries of $R$ into $u$ and take $F_j$ to be the components of the
compactly supported field
$\chi_L(R)\psi(S(R))\mathcal F_{D,\eta}(R)$.  Its smooth cutoff supplies the
slice and integrability hypotheses, and Eq.~\eqref{eq:cutoff-gaussian-ibp}
then specializes to
\[
 \E\operatorname{div}\!\left(
   \chi_L\psi(S)\mathcal F_{D,\eta}\right)
 =2\E\!\left[
   \chi_L\psi(S)\ip{R}{\mathcal F_{D,\eta}}_{\mathrm F}
 \right].
\]
For fixed $\eta$, the cutoff derivative term tends to zero as $L$ tends to
infinity by Gaussian integrability.  The first variation of $M$ along the
regularized field is
\[
 \dot M_\eta=\frac12(P_\eta D+DP_\eta).
\]
On the almost sure full rank set, $P_\eta\to I$.  Hence
$\dot M_\eta\to D$, and the derivative of $\psi(S)$ tends to zero because
$\partial_DS=0$.

{Put $B_\eta=(M+\eta I)^{-1}$.  Direct differentiation in the
real matrix coordinates gives
\[
 \operatorname{div}\mathcal F_{D,\eta}
 =\frac12\Bigl[(k-\tr P_\eta)\tr(B_\eta D)
      -\tr(B_\eta M B_\eta D)\Bigr].
\]
On the full rank set this converges to
$\alpha\tr(M^{-1}D)$.  Because $0\preceq P_\eta\preceq I$ and
$0\preceq B_\eta M B_\eta\preceq B_\eta\preceq M^{-1}$,
\[
 \abs{\operatorname{div}\mathcal F_{D,\eta}}
 \le \frac{k+1}{2}\norm{D}_{\mathrm{op}}\tr(M^{-1}),
 \qquad 0<\eta\le1.
\]
Moreover, $\norm{\dot M_\eta}_{\mathrm F}\le\norm D_{\mathrm F}$, so the
derivative of the compactly supported test is bounded by a constant depending
only on the test and $D$.  Also,
$2\ip{R}{\mathcal F_{D,\eta}}_{\mathrm F}\to\tr D$, and this term is bounded
by $2m\norm D_{\mathrm{op}}$.  Thus the full divergence integrand is dominated
by $C_{m,D,\psi}(1+\tr(M^{-1}))$.}
The standard inverse Wishart mean is
\[
 \begin{aligned}
 \E M^{-1}&=\frac{2}{k-2m-1}I_{2m},\\
 \E\tr(M^{-1})&=\frac{4m}{k-2m-1},
 \end{aligned}
\]
which is finite in the stated range.  This is the standard inverse Wishart
mean, a special case of the general inverse moment formulas
~\cite{Muirhead1982,Matsumoto2012}.  Dominated convergence therefore permits
first $L\to\infty$ and then $\eta\downarrow0$ in
\eqref{eq:cutoff-gaussian-ibp}.  Using
\eqref{eq:lifted-field-identities} gives
\[
 \alpha\E\!\left[\psi(S)\tr(M^{-1}D)\right]
 =\E\!\left[\psi(S)\tr D\right],
\]
as claimed.  The inverse trace bound also proves the two integrability
statements.
\end{proof}

We record the block calculation used for the matrix valued test of rank one in
the proof of \cref{cor:self-coupled-direction}.  Let
\[
 T=\begin{pmatrix}I&I\\ iI&-iI\end{pmatrix}.
\]
Then
\[
 [A\;\overline A]=RT,
 \qquad K=T^*MT,
 \qquad T^*T=TT^*=2I,
\]
and direct multiplication gives
\[
 \Delta(H)=\frac14T
 \begin{pmatrix}H&0\\0&\overline H\end{pmatrix}T^*.
\]
Cyclicity of trace consequently yields
\begin{equation}
 \tr(M^{-1}\Delta(H))
 =\tr\!\left(
 K^{-1}\begin{pmatrix}H&0\\0&\overline H\end{pmatrix}
 \right).
 \label{eq:real-complex-score-trace}
\end{equation}
If $H=cc^*/(w+\delta)^2$, the lower right block of $K^{-1}$ is the
entrywise conjugate of its upper left block.  Since $K^{-1}$ is Hermitian,
the two diagonal block contributions in
\eqref{eq:real-complex-score-trace} are equal and real.  The block inverse
formula and \eqref{eq:deterministic-schur-order} therefore give
\begin{equation}
 \tr(M^{-1}\Delta(H))
 =\frac{2c^*\mathcal S(A)^{-1}c}{(w+\delta)^2}.
 \label{eq:rank-one-real-complex-score}
\end{equation}
Also,
\begin{equation}
 \tr(\Delta(H))=\tr(H)=\frac{c^*c}{(w+\delta)^2}.
 \label{eq:rank-one-direction-trace}
\end{equation}
Taking $c=C_r$, $w=W_r$, and $m=2r-1$ gives precisely
\eqref{eq:rank-one-score-trace} and
\eqref{eq:rank-one-inverse-score-trace}.

Smooth compactly supported tests determine finite signed measures on this
$S$ space in finite dimensions \cite[Chapters~7 and~8]{Folland1999RealAnalysis}.
The bounded Borel extension used in
\eqref{eq:V-to-W-inverse} therefore follows from the same identity.

\subsection{Bounded Borel extension}

The preceding cutoff argument produces the smooth compactly supported score
identity as a specialization of the classical framework.  The following
finite signed measure argument permits the test direction to
depend measurably on the preserved transpose-gram coordinate.

\begin{proof}[Proof of \cref{prop:bounded-preserved-S-score}]
The smooth test identity is proved directly in the Gaussian coordinates in
\cref{lem:wishart-boundary-regularity}.  We explain here why it gives the
stated bounded Borel result.  Since \(M\) is a real Wishart matrix of size
\(2m\),
\begin{equation}
 \E\tr(M^{-1})=\frac{4m}{k-2m-1}<\infty.
 \label{eq:real-wishart-inverse-trace}
\end{equation}
Consequently,
\[
 \abs{\tr(M^{-1}\Delta(H))}
 \le \norm{\Delta(H)}_{\mathrm{op}}\tr(M^{-1})
\]
is integrable.  Define a finite signed measure on the real vector space of
complex symmetric matrices by
\begin{align}
 \nu_H(E):={}&\int_{\{R:S(R)\in E\}}
 \bigl[\alpha_{k,m}\tr(M(R)^{-1}\Delta(H))\notag\\
 &\hspace{7.5em}-\tr(\Delta(H))\bigr]\,\dd\mu_{k,m}(R).
 \label{eq:score-signed-measure}
\end{align}
The identity in \cref{lem:wishart-boundary-regularity} says that the integral
of every continuously differentiable compactly supported test against
\(\nu_H\) vanishes.  Such tests determine finite signed measures on this
real space in finite dimensions \cite[Chapters~7 and~8]{Folland1999RealAnalysis}, so
\(\nu_H=0\).  Integrating a bounded Borel
function \(u\) against this zero measure gives
\eqref{eq:bounded-preserved-S-score}.  The same inverse trace estimate and the
boundedness of \(u\) give the two asserted integrability statements.
\end{proof}

%% file: sections/appendix_second_moment.tex
\section{Exact coefficient and coefficient asymptotics}
\label{app:second-moment}

\subsection{Completion of the proof of Theorem~\ref{thm:main-finite}}
\label{sec:proof-main-coefficient}

The product to Gamma conversion and the logarithmic expansions are elementary
once the two inverse moment transitions have been proved.  The coefficient
bounds and critical regime expansions below follow from those
transitions, rather than from prior moment formulas.  They
are recorded here to keep the main proof focused on the alternating
recursion.

Put $\mathcal R^{\mathrm{AC}}_{k,n}:=B_{k,n}/b_n$ for the normalized finite coefficient.

\input{sections/01_coefficient_regimes_visuals}

\begin{proof}[Completion of the proof of \cref{thm:main-finite}]
The product in Eq.~\eqref{eq:B-exact} has the equivalent Gamma form
\begin{equation}
 B_{k,n}=b_n\frac{k}{k-1}\frac{2^{n-1}}{4^{n-1}}
 \frac{\Gamma(k/2+n)}{\Gamma(k/2+1)}
 \frac{\Gamma((k-4n+1)/4)}{\Gamma((k-3)/4)}.
 \label{eq:B-gamma}
\end{equation}
Applying the Gamma recurrence \cite[Eq.~5.5.1]{DLMFGamma} separately to
the numerator and denominator products gives
\begin{align*}
 \prod_{r=2}^{n}(k+2r-2)
 &=2^{n-1}\frac{\Gamma(k/2+n)}{\Gamma(k/2+1)},\\
 \prod_{r=2}^{n}(k-4r+1)
 &=4^{n-1}\frac{\Gamma((k-3)/4)}
 {\Gamma((k-4n+1)/4)},
\end{align*}
which proves Eq.~\eqref{eq:B-gamma}.

Two elementary branches give a uniform estimate on the full theorem range.
For $n\ge1$ and $k\ge8n$,
\begin{equation}
 \log\prod_{r=2}^{n}\frac{k+2r-2}{k-4r+1}
 \le\frac{6n^2-6}{k}.
 \label{eq:B-log-branch}
\end{equation}
For $n\ge1$ and $4n\le k<8n$, monotonicity in $k$ and reversal of the
denominator product give
\begin{equation}
 \frac{k}{k-1}\prod_{r=2}^{n}\frac{k+2r-2}{k-4r+1}
 \le\frac{2(6n)^{n-1}}{(n-1)!}\le e^{4n}.
 \label{eq:B-factorial-branch}
\end{equation}
For $k\ge8n$, also $k/(k-1)<2$; for $4n\le k<8n$, the head ratio is already
included in Eq.~\eqref{eq:B-factorial-branch}.  Cauchy--Schwarz in Euler's Gamma integral
\cite[Eq.~5.2.1]{DLMFGamma} gives
$\Gamma(n+1/2)^2\le\Gamma(n)\Gamma(n+1)=n\Gamma(n)^2$.
Thus
$b_n\le2\sqrt{n/\pi}\le2\sqrt n$.  Using the latter convenient form, the two
branches yield the full range bound
\begin{equation}
 B_{k,n}\le4\sqrt n\exp\!\left(\frac{32n^2}{k}\right),
 \qquad n\ge1,\quad k\ge4n.
 \label{eq:B-uniform}
\end{equation}

For the sharper finite estimate, put
\[
 a_r:=6r-3,
 \qquad
 \frac{k+2r-2}{k-4r+1}=1+\frac{a_r}{k-4r+1}.
\]
The identity
\[
 \frac{a_r}{k-4r+1}
 =\frac{a_r}{k}
 +\frac{a_r(4r-1)}{k(k-4r+1)}
\]
and the finite sums
\begin{align*}
 \sum_{r=2}^{n}a_r&=3n^2-3,\\
 \sum_{r=2}^{n}a_r(4r-1)&=8n^3+3n^2-2n-9
\end{align*}
make the constants explicit.  Using
$\log(1+x)\le x$ and
\[
 \log\frac{k}{k-1}
 \le\frac1{k-1}=\frac1k+\frac1{k(k-1)}
\]
then gives
\[
 \log\frac{B_{k,n}}{b_n}
 \le \frac{3n^2-2}{k}
 +\frac{9n^3}{k(k-4n+1)},
\]
which is Eq.~\eqref{eq:B-simple}.

For the two sided remainder, use
$x-x^2/2\le\log(1+x)\le x$ together with
\[
 \frac12\sum_{r=2}^{n}a_r^2
 =6n^3-\frac32n-\frac92\le19n^3.
\]
The same decomposition gives the exact representation
\[
 \begin{gathered}
 \theta_{k,n}:=\log\frac{B_{k,n}}{b_n}-\frac{3n^2}{k},\\
 \begin{aligned}
 \abs{\theta_{k,n}}
 &\le \frac2k+\frac{9n^3}{k(k-4n+1)}\\
 &\quad+\frac{19n^3}{(k-4n+1)^2}.
 \end{aligned}
 \end{gathered}
\]
For $k\ge8n$ the right hand side is at most
$2/k+94n^3/k^2$, proving Eq.~\eqref{eq:B-bridge}.  Finally, Stirling's
Gamma ratio asymptotic \cite[Eq.~5.11.12]{DLMFGamma} gives $b_n/(2\sqrt{n/\pi})\to1$.
\end{proof}

\subsection{Derivation of the coefficient regime table}

The six asymptotics in Table~\ref{tab:coefficient-regimes} follow from the
exact product; we record the details
because they are theoretical claims even though they are not numbered
equations.  At the two linear endpoints, Eq.~\eqref{eq:B-exact} gives
\begin{align*}
 \mathcal R^{\mathrm{AC}}_{4n,n}
 &=\frac{4n}{4n-1}\,2^{-(n-1)}
   \frac{\Gamma(3n)}{\Gamma(2n+1)}
   \frac{\Gamma(1/4)}{\Gamma(n-3/4)},\\
 \mathcal R^{\mathrm{AC}}_{8n,n}
 &=\frac{8n}{8n-1}\,2^{-(n-1)}
   \frac{\Gamma(5n)}{\Gamma(4n+1)}
   \frac{\Gamma(n+1/4)}{\Gamma(2n-3/4)}.
\end{align*}
Stirling's formula, applied to each displayed Gamma ratio, yields
\begin{align*}
 \mathcal R^{\mathrm{AC}}_{4n,n}&\sim\frac{\Gamma(1/4)}{\sqrt{3\pi}}\,n^{1/4}
   \left(\frac{27}{8}\right)^n,\\
 \mathcal R^{\mathrm{AC}}_{8n,n}&\sim\frac{2^{5/4}}{\sqrt5}
   \left(\frac{3125}{2048}\right)^n.
\end{align*}

For $k/n\to\infty$, expand the logarithm of the exact product in the form
\[
 \log \mathcal R^{\mathrm{AC}}_{k,n}=\log\frac{k}{k-1}
 +\sum_{r=2}^{n}\log\left(1+\frac{6r-3}{k-4r+1}\right).
\]
Using the finite sums already displayed in the proof above and expanding to
second order gives, uniformly whenever $k/n\to\infty$,
\[
 \log \mathcal R^{\mathrm{AC}}_{k,n}=\frac{3n^2}{k}+\frac{2n^3}{k^2}
 +O\left(\frac1k+\frac{n^2}{k^2}+\frac{n^4}{k^3}\right).
\]
Substitution gives, respectively,
\begin{align*}
 \log \mathcal R^{\mathrm{AC}}_{\lceil n^{3/2}\rceil,n}
   &=3\sqrt n+2+O(n^{-1/2}),\\
 \log \mathcal R^{\mathrm{AC}}_{\lceil n^2/\log n\rceil,n}
   &=3\log n+\frac{2(\log n)^2}{n}\\
   &\quad+O\left(\frac{(\log n)^3}{n^2}\right),\\
 \log \mathcal R^{\mathrm{AC}}_{n^2,n}&=3+\frac2n+O(n^{-2}),\\
 \log \mathcal R^{\mathrm{AC}}_{n^3,n}&=\frac3n+O(n^{-3}).
\end{align*}
Exponentiating proves the four corresponding rows of
Table~\ref{tab:coefficient-regimes}.  Changing a noninteger $k$ by at most
one changes these expansions only within the displayed remainders.

\subsection{Polynomial regime}
\label{sec:polynomial-regime}

\begin{proof}[Proof of \cref{cor:polynomial}]
Under \eqref{eq:poly-regime},
\[
  \frac{k_n}{n}\ge\frac{n}{D\log n}\longrightarrow\infty.
\]
Thus $k_n\ge8n$ for all sufficiently large $n$.  The remainder in
Eq.~\eqref{eq:B-bridge} tends to zero, because
$n^3/k_n^2\le D^2(\log n)^2/n\to0$.  Hence, for all sufficiently large $n$,
\[
  B_{k_n,n}
  \le2e\sqrt n\exp\!\left(\frac{3n^2}{k_n}\right)
  \le2e\,n^{3D+1/2}.
\]
At radius $n^{-\gamma}$, \cref{thm:main-finite} therefore gives, for every
$z\in\C$,
\[
  \Pp\{\abs{H_{k_n,n}-z}\le n^{-\gamma}\sigma_{k_n,n}\}
  \le2e\,n^{3D+1/2-2\gamma}.
\]
If \eqref{eq:radius-exponent-choice} holds with a fixed strict margin, the last
expression is at most $n^{-A}$ for all sufficiently large $n$.
\end{proof}

%% file: sections/01_coefficient_regimes_visuals.tex
\begin{figure}[tbp]
  \centering
  \begingroup
  \setlength{\unitlength}{1pt}
  \resizebox{0.96\textwidth}{!}{%
  \begin{picture}(504,238)
    \put(0,0){\includegraphics[width=504pt]{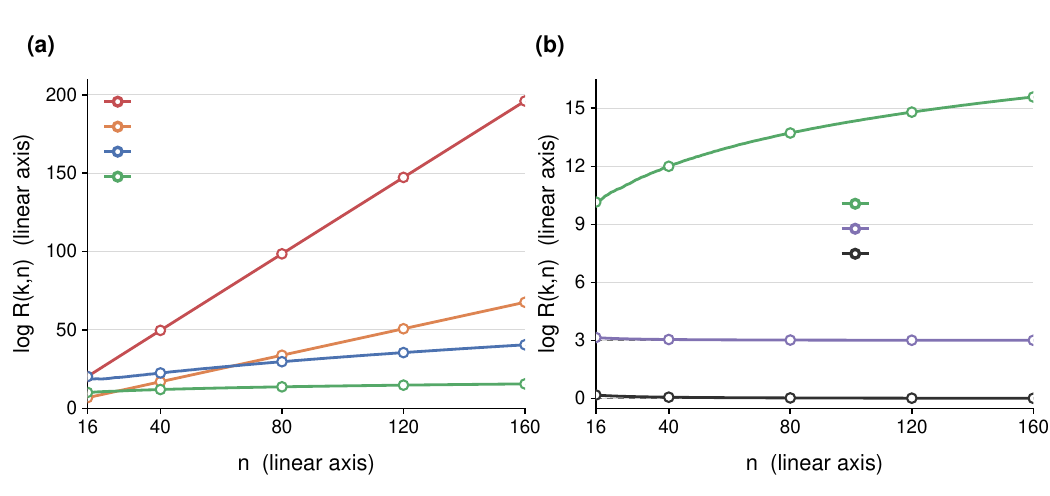}}
    \put(67,187){\makebox(0,0)[l]{\fontsize{9}{10}\selectfont $k=4n$}}
    \put(67,175){\makebox(0,0)[l]{\fontsize{9}{10}\selectfont $k=8n$}}
    \put(67,163){\makebox(0,0)[l]{\fontsize{9}{10}\selectfont $k=n^{3/2}$}}
    \put(67,151){\makebox(0,0)[l]{\fontsize{9}{10}\selectfont $k=n^2/\log n$}}
    \put(421,138){\makebox(0,0)[l]{\fontsize{9}{10}\selectfont $k=n^2/\log n$}}
    \put(421,126){\makebox(0,0)[l]{\fontsize{9}{10}\selectfont $k=n^2$}}
    \put(421,114){\makebox(0,0)[l]{\fontsize{9}{10}\selectfont $k=n^3$}}
  \end{picture}}
  \endgroup
  \caption{Natural logarithm of the exact normalized coefficient
  $\mathcal R^{\mathrm{AC}}_{k,n}$, evaluated from the exact product equivalent to
  Eq.~\eqref{eq:B-exact} for
  $16\le n\le160$.  The plot labels $R$ denote $\mathcal R^{\mathrm{AC}}$. The horizontal $n$ axis and the vertical axis are both
  linear; the ordinate is the natural logarithm $\log \mathcal R^{\mathrm{AC}}_{k,n}$.  Exact values
  at integer $n$ are joined by line segments, and markers identify selected
  checkpoints.  When a displayed relation does not give an integer $k$, the
  plotted value is rounded upward, although the ceiling sign is omitted from
  the legend.  Panel (a) shows the rapid penalty in the linear and logarithmic
  boundary regimes, including the subquadratic relation $k=n^{3/2}$.
  Panel (b) compares the logarithmic boundary with the quadratic and cubic
  scales: $\log \mathcal R^{\mathrm{AC}}_{\lceil n^2/\log n\rceil,n}\sim3\log n$,
  $\log \mathcal R^{\mathrm{AC}}_{n^2,n}\to3$, and $\log \mathcal R^{\mathrm{AC}}_{n^3,n}\to0$.  Dashed lines mark the
  limits $3$ and $0$.}
  \label{fig:coefficient-regimes}
\end{figure}

\begin{table}[tbp]
  \centering
  \small
  \renewcommand{\arraystretch}{1.15}
  \caption{Coefficient regimes obtained from the exact product equivalent to
  Eq.~\eqref{eq:B-exact}.  The linear rows use Stirling asymptotics, while the
  remaining rows follow by expanding that product.  Upward integer rounding
  in the two nonintegral sequences changes none of the displayed asymptotics.}
  \label{tab:coefficient-regimes}
  \begin{tabular}{lcc}
  \toprule
  Dimension relation & Normalized coefficient $\mathcal R^{\mathrm{AC}}_{k_n,n}$ & Interpretation \\
  \midrule
  $k_n=4n$
    & $\displaystyle\sim\frac{\Gamma(1/4)}{\sqrt{3\pi}}n^{1/4}
      \left(\frac{27}{8}\right)^n$
    & Edge of the finite theorem \\
  \addlinespace[3pt]
  $k_n=8n$
    & $\displaystyle\sim\frac{2^{5/4}}{\sqrt5}
      \left(\frac{3125}{2048}\right)^n$
    & Linear exponential penalty \\
  \addlinespace[3pt]
  $k_n=\lceil n^{3/2}\rceil$
    & $e^2\exp(3\sqrt n)(1+O(n^{-1/2}))$
    & Subexponential, but superpolynomial \\
  \addlinespace[3pt]
  $k_n=\lceil n^2/\log n\rceil$
    & $n^3(1+O((\log n)^2/n))$ & Polynomial boundary \\
  \addlinespace[3pt]
  $k_n=n^2$ & $e^3(1+2/n+O(n^{-2}))$
    & Constant penalty in finite dimensions \\
  \addlinespace[3pt]
  $k_n=n^3$ & $1+3/n+O(n^{-2})$
    & Limiting coefficient recovered \\
  \bottomrule
  \end{tabular}
\end{table}

%% file: sections/appendix_symmetric_limit.tex
\section{Symmetric Gaussian limit and perturbation proofs}
\label{app:direct-symmetric}

This appendix proves
\cref{cor:independent-matrix-shift,thm:symmetric-gaussian-anticoncentration,cor:symmetric-gaussian-perturbation}.
The symmetric Gaussian theorem is the fixed degree limit of
\cref{thm:main-finite}.  The two perturbation corollaries retain their direct
Gaussian comparison proofs.

\subsection{Fixed degree limit proof of the symmetric Gaussian theorem}

\begin{proof}[Proof of \cref{thm:symmetric-gaussian-anticoncentration}]
Fix $n$.  Write $X^{(k)}=(x_{ai})_{1\le a\le k,\,1\le i\le2n}$.
For one row, collect the upper triangular products
$(x_{ai}x_{aj})_{1\le i\le j\le2n}$ into a finite dimensional complex
vector.  These row vectors are independent, identically distributed, and
centered.  Wick's rule gives complex variance one for every off diagonal
coordinate, complex variance two for every diagonal coordinate, zero
covariance between distinct upper triangular coordinates, and zero
pseudocovariance.  The finite dimensional multivariate central limit theorem
therefore gives
\begin{equation}
 k^{-1/2}(X^{(k)})^{\T}X^{(k)}
 \ \Longrightarrow\ G_{2n}^{\mathrm{sym}}
 \qquad(k\longrightarrow\infty).
 \label{eq:symmetric-gram-clt}
\end{equation}
Here $\Longrightarrow$ denotes weak convergence of probability laws,
equivalently convergence in distribution, in the finite dimensional real
coordinate space formed by the real and imaginary parts of the upper
triangular entries.  Here and below $n$ is fixed, and $k$ tends to infinity
through the positive integers.  The central limit theorem and the weak
convergence results used below are standard; see Ref.~\cite{Kallenberg2021}.

The hafnian is a continuous polynomial and satisfies
$\haf(cA)=c^n\haf(A)$.  The continuous mapping theorem applied to
Eq.~\eqref{eq:symmetric-gram-clt} yields
\begin{equation}
 k^{-n/2}H_{k,n}
 =\haf\!\left(k^{-1/2}(X^{(k)})^{\T}X^{(k)}\right)
 \ \Longrightarrow\ H_n^{\mathrm{sym}}.
 \label{eq:symmetric-hafnian-weak-limit}
\end{equation}
The exact second moment in Eq.~\eqref{eq:sigma-main} gives
\begin{equation}
 \begin{aligned}
 \frac{k^{-n}\sigma_{k,n}^2}{(2n-1)!!}
 &=\prod_{q=0}^{n-1}\left(1+\frac{2q}{k}\right),\\
 k^{-n}\sigma_{k,n}^2
 &\longrightarrow (2n-1)!!=(\sigma_n^{\mathrm{sym}})^2,\\
 k^{-n/2}\sigma_{k,n}&\longrightarrow\sigma_n^{\mathrm{sym}}.
 \end{aligned}
 \label{eq:symmetric-scale-limit}
\end{equation}
In addition, the fixed degree coefficient limit in
\cref{thm:main-finite} is
\begin{equation}
 B_{k,n}\longrightarrow b_n.
 \label{eq:symmetric-coefficient-limit}
\end{equation}

Put $U_{k,n}:=H_{k,n}/\sigma_{k,n}$ and
$U_n^{\mathrm{sym}}:=H_n^{\mathrm{sym}}/\sigma_n^{\mathrm{sym}}$.
Equations~\eqref{eq:symmetric-hafnian-weak-limit} and
\eqref{eq:symmetric-scale-limit}, together with Slutsky's theorem, give
$U_{k,n}\Longrightarrow U_n^{\mathrm{sym}}$.  Theorem~\ref{thm:main-finite}
applied at the unnormalized center $a\sigma_{k,n}$, where
$a:=z/\sigma_n^{\mathrm{sym}}$, bounds the corresponding normalized disk
probability for every $k\ge4n$.  For any $\eta>0$, the open set part of the Portmanteau
theorem and Eq.~\eqref{eq:symmetric-coefficient-limit} give
\begin{equation}
 \begin{aligned}
 \Pp\!\left\{\abs{U_n^{\mathrm{sym}}-a}\le\varepsilon\right\}
 &\le
 \Pp\!\left\{\abs{U_n^{\mathrm{sym}}-a}<\varepsilon+\eta\right\}\\
 &\le \liminf_{k\to\infty}
 \Pp\!\left\{\abs{U_{k,n}-a}<\varepsilon+\eta\right\}\\
 &\le \min\{1,b_n(\varepsilon+\eta)^2\}.
 \end{aligned}
 \label{eq:symmetric-portmanteau-chain}
\end{equation}
Letting $\eta\downarrow0$ proves
Eq.~\eqref{eq:symmetric-gaussian-small-ball}, including
$\varepsilon=0$.

\end{proof}

\subsection{Perturbations of the rectangular Gaussian factor}

The proof follows the Gaussian perturbation comparison for Ginibre
permanents of Koehler and Leung
\cite[Theorem~4.1]{KoehlerLeung2026}.  Here the same conditioning and
successive Gaussian replacement argument is applied to columns, using
separate linearity of the transpose-gram hafnian polynomial in place of
permanent multilinearity.

\begin{proof}[Proof of \cref{cor:independent-matrix-shift}]
For $Y=(y_1,\ldots,y_{2n})$, set
$F(y_1,\ldots,y_{2n})=\haf(Y^{\T}Y)$.  Every perfect matching monomial
contains each column label exactly once.  Consequently, $F$ is complex linear
in every column separately.

Let $g,h\sim\CN(0,I_k)$. For fixed $a\in\C^k$, $b\in\C$
and $\rho\ge0$, the circular Gaussian disk-centering inequality gives
\begin{equation}
 \Pp_g\{|g^{\T}a+b|\le\rho\}
 \le \Pp_h\{|h^{\T}a|\le\rho\}.
 \label{eq:scalar-gaussian-shift-disk}
\end{equation}
Indeed, for $a\ne0$ the linear form has circular Gaussian variance
$\|a\|^2$, and a centered disk maximizes the integral of its radial,
decreasing density. This is the two-dimensional Gaussian case of
Anderson's inequality \cite{Anderson1955}. If $a=0$, the right side is
one, so the inequality still holds, including $\rho=0$.

Fix the complete perturbation $\Delta=\delta$. Its columns need not be
independent. Conditional on every Gaussian column except the first,
separate linearity gives
\[
 F(g_1+\delta_1,g_2+\delta_2,\ldots)-w
 =g_1^{\T}a+\delta_1^{\T}a-w.
\]
Apply Eq.~\eqref{eq:scalar-gaussian-shift-disk} and integrate the other
columns. On the right, the first shift and the external center have both
been removed, while the first Gaussian column retains its original law.
For each subsequent column the same conditioning removes its shift.
Fubini's theorem applies to these bounded Borel indicators. Thus
\begin{equation}
 \Pp_X\{|F(X+\delta)-w|\le\rho\}
 \le \Pp_X\{|F(X)|\le\rho\}.
 \label{eq:independent-factor-shift-disk-chain}
\end{equation}
Independence of $X$ and $\Delta$ identifies their joint law with a product;
integrating this pointwise-in-$\delta$ inequality therefore gives the
same comparison for arbitrary random $\Delta$, with no moment condition.
Taking $\rho=\varepsilon\sigma_{k,n}$ and applying
\cref{thm:main-finite} at center zero proves
Eq.~\eqref{eq:independent-matrix-shift-bound}, including $\varepsilon=0$.
The argument compares disk probabilities directly, so no exponential
indicator majorant and no extra factor $e$ are needed.
\end{proof}

\subsection{Direct proof for additive symmetric matrix perturbations}

\begin{proof}[Proof of \cref{cor:symmetric-gaussian-perturbation}]
Condition first on $\Delta$.  Its diagonal entries may be discarded because
they do not occur in any hafnian monomial.  For a fixed vertex $v$, condition
on all Gaussian edges outside the complete star at $v$.  Every perfect
matching uses exactly one edge in that star.

More precisely, the hafnian is complex linear in the whole Gaussian star.
Conditionally, the star contribution has the form $g^{\T}c+b$, where $g$ is a
standard circular complex Gaussian vector.  Thus
$g^{\T}c\sim\CN(0,\norm{c}^2)$, and radial monotonicity of its density gives
\begin{equation}
 \Pp\{\abs{g^{\T}c+b}\le\rho\}
 \le \Pp\{\abs{g^{\T}c}\le\rho\}.
 \label{eq:symmetric-star-scalar-centering}
\end{equation}
After each replacement the edge family remains independent Gaussian, with
zero shift on every previously visited edge, so overlapping stars cause no
difficulty.

The final comparison is with the unperturbed centered symmetric Gaussian
hafnian.

Averaging this deterministic shift comparison over the independent law of
$\Delta$ and applying \cref{thm:symmetric-gaussian-anticoncentration} at
center zero proves Eq.~\eqref{eq:symmetric-gaussian-perturbation-bound} with
the same coefficient $b_n$.  No boundedness, moment, or entrywise independence
assumption on $\Delta$ is used.  Independence from the Gaussian matrix is
essential: the choice $\Delta=-G_{2n}^{\mathrm{sym}}$ would collapse the
shifted hafnian to zero.

\end{proof}

%% file: sections/appendix_angular_obstruction.tex
\section{Radial factorization and the Wick angular obstruction}
\label{app:angular-obstruction}

The radial factorization below assumes positivity and, where stated,
inverse integrability. \Cref{prop:inverse-variance} supplies these hypotheses
when $k\ge4n$. The Wick obstruction is an extended-integral statement
and does not assume that sufficient condition.

\begin{lemma}[Inverse integrability and radial and angular factorization]
\label[lemma]{lem:radial-angular-factorization}
Let $n\ge1$ and $k\ge2$.  Assume that $V_n>0$ almost surely, and put
$d=2n-1$.  For the columns $x_j$ of $A_n$, write
$x_j=\sqrt{R_j}\,u_j$, where the $R_j$ are independent with law
$\GammaLaw(k,1)$ and the $u_j$ are independent uniform unit vectors in
$\C^k$, independent of the radii.  Put
$\widehat A_n=[u_1,\ldots,u_d]$, let $\widehat C_n$ be its cofactor vector,
and define the directional cofactor energy by
$\widehat V_{k,n}:=\norm{\widehat A_n\widehat C_n}^2$.  Then
\begin{equation}
  V_n=\left(\prod_{j=1}^{d}R_j\right)\widehat V_{k,n}.
  \label{eq:radial-polar-decomposition}
\end{equation}
Moreover, $\widehat V_{k,n}>0$ almost surely, and
\[
 V_n^{-1}\text{ is integrable}
 \quad\Longleftrightarrow\quad
 \widehat V_{k,n}^{-1}\text{ is integrable}.
\]
Whenever these equivalent conditions hold,
\[
  \E V_n^{-1}
  =(k-1)^{-d}\E\widehat V_{k,n}^{-1}.
\]
Consequently,
\begin{equation}
  \Lambda_{k,n}
  =\underbrace{\left(\frac{k}{k-1}\right)^d}_{R_{k,n}^{\mathrm{rad}}}
   \underbrace{\bigl(\E\widehat V_{k,n}\bigr)
   \bigl(\E\widehat V_{k,n}^{-1}\bigr)}_{\mathcal A_{k,n}},
  \qquad \mathcal A_{k,n}\ge1.
  \label{eq:radial-angular-factorization}
\end{equation}
In particular,
\begin{equation}
  \Lambda_{k,n}
  \ge \left(\frac{k}{k-1}\right)^{2n-1}
  \ge \exp\!\left(\frac{2n-1}{k}\right)
  \ge \exp\!\left(\frac{n}{k}\right).
  \label{eq:radial-lower-bound}
\end{equation}
\end{lemma}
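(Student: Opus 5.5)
The plan is to exploit homogeneity of the cofactors in each column. Each coordinate $(C_n)_j$ is a hafnian of the transpose-gram matrix of the columns $x_i$, $i\ne j$. In every matching monomial each column label appears exactly once, so $(C_n)_j$ is complex linear in each $x_i$ with $i\ne j$. Writing $x_i=\sqrt{R_i}\,u_i$ therefore gives
\[
 (C_n)_j=\Bigl(\prod_{i\ne j}\sqrt{R_i}\Bigr)(\widehat C_n)_j .
\]
It follows that
\[
 A_nC_n=\sum_j x_j(C_n)_j=\Bigl(\prod_{i=1}^{d}\sqrt{R_i}\Bigr)\widehat A_n\widehat C_n ,
\]
and taking squared norms yields \eqref{eq:radial-polar-decomposition}. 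The polar decomposition of a standard circular Gaussian vector in $\C^k$ is standard: $\norm{x_j}^2\sim\GammaLaw(k,1)$, it is independent of the uniform direction $u_j$, and the columns are independent. Since $V_n>0$ almost surely and $\prod R_j>0$ almost surely, we get $\widehat V_{k,n}>0$ almost surely.

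Next I apply independence of the radii from the directions. Tonelli's theorem in the extended nonnegative reals gives
\[
 \E V_n^{-1}=\Bigl(\prod_j\E R_j^{-1}\Bigr)\E\widehat V_{k,n}^{-1},
 \qquad \E R_j^{-1}=\frac1{k-1}\in(0,\infty)\quad\text{for }k\ge2 .
\]
This identity holds in $[0,\infty]$, which proves the integrability equivalence and the formula $\E V_n^{-1}=(k-1)^{-d}\E\widehat V_{k,n}^{-1}$ simultaneously. In the same way, $\E V_n=k^d\,\E\widehat V_{k,n}$, since $\E R_j=k$. For the factorization I then need $\sigma_{k,n}^2=\E\abs{H_{k,n}}^2=\E V_n$. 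This follows from \cref{prop:conditional-mixture}: $\E[\abs{H}^2\mid A_n]=V_n$. Multiplying the two identities gives \eqref{eq:radial-angular-factorization}.

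For the lower bounds, Cauchy--Schwarz (equivalently Jensen for $x\mapsto1/x$) applied to the positive variable $\widehat V_{k,n}$ gives $(\E\widehat V)(\E\widehat V^{-1})\ge1$. Here $\E\widehat V$ is finite and positive because it equals $k^{-d}\sigma_{k,n}^2$. The remaining inequalities are elementary. First, $\log\frac{k}{k-1}=-\log(1-1/k)\ge1/k$. Second, $2n-1\ge n$. If inverse integrability fails, \eqref{eq:radial-lower-bound} holds trivially with $\Lambda_{k,n}=\infty$.

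The only step needing care is the multilinearity of the cofactors. The diagonal entries $x_i^{\T}x_i$ never occur in a perfect matching monomial, so each column appears linearly, not quadratically; the scaling factor is exactly $\prod_{i\ne j}\sqrt{R_i}$. No other genuine obstacle is expected.
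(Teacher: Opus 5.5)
Your proof is correct and follows essentially the same route as the paper: column-wise homogeneity of the cofactors (which the paper phrases as $\haf(D_0BD_0)=(\prod_\ell s_\ell)\haf(B)$), then independence of radii and directions with Tonelli and $\E R_j^{-1}=(k-1)^{-1}$, then $\E V_n=\sigma_{k,n}^2$, Cauchy--Schwarz, and $\log\frac{k}{k-1}\ge 1/k$. Two points you make explicit are left implicit in the paper: that $\E V_n=\sigma_{k,n}^2$ follows from the conditional law in the fixed-past proposition, and that the lower bound holds trivially when $\Lambda_{k,n}=\infty$.
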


\begin{proof}
Put $\widehat A_n=[u_1,\ldots,u_d]$ and let $\widehat C_n$ be its cofactor
vector.  For an even symmetric matrix $B$ and a diagonal matrix
$D_0=\diag(s_1,\ldots,s_{2m})$, the perfect matching definition gives
\[
  \haf(D_0BD_0)=\left(\prod_{\ell=1}^{2m}s_\ell\right)\haf(B),
\]
because every perfect matching uses each index exactly once.  Applying this
identity to the Gram matrix after deleting column $j$ gives
\[
 \begin{aligned}
  (C_n)_j
  &=\left(\prod_{\ell\ne j}\sqrt{R_\ell}\right)
    (\widehat C_n)_j,\\
  A_nC_n
  &=\left(\prod_{\ell=1}^{d}\sqrt{R_\ell}\right)
    \widehat A_n\widehat C_n.
 \end{aligned}
\]
Taking squared Hermitian norms proves
Eq.~\eqref{eq:radial-polar-decomposition}.

Every radius is positive, so the assumed positivity of $V_n$ and
Eq.~\eqref{eq:radial-polar-decomposition} imply that
$\widehat V_{k,n}>0$ almost surely.  The reciprocal radial product is
integrable because $k\ge2$ and
$\E R_j^{-1}=(k-1)^{-1}$.  Independence and Tonelli's theorem therefore show
that $V_n^{-1}$ is integrable exactly when
$\widehat V_{k,n}^{-1}$ is integrable.  Under these equivalent conditions,
Fubini's theorem and the identities $\E R_j=k$ and
$\E R_j^{-1}=(k-1)^{-1}$ give
\[
  \E V_n=k^d\E\widehat V_{k,n},\qquad
  \E V_n^{-1}=(k-1)^{-d}\E\widehat V_{k,n}^{-1}.
\]
Since $\E V_n=\sigma_{k,n}^2$, multiplying the two identities proves
Eq.~\eqref{eq:radial-angular-factorization}.  Almost sure positivity and
the Cauchy Schwarz inequality give
\[
  1=\left(\E\!\left[
  \widehat V_{k,n}^{1/2}\widehat V_{k,n}^{-1/2}\right]\right)^2
  \le(\E\widehat V_{k,n})(\E\widehat V_{k,n}^{-1}).
\]
Finally, $\log(k/(k-1))\ge1/k$ and $2n-1\ge n$ prove
Eq.~\eqref{eq:radial-lower-bound}.
\end{proof}

Thus \cref{prop:inverse-variance} proves that, for any integer sequence
$(k_n)$ satisfying $k_n\ge4n$ eventually, the model with $k=k_n$ satisfies
$\E V_n^{-1}<\infty$ eventually.  This is a sufficient condition, not
a known numerical characterization.  For $k\ge2$, the lemma reduces the
exact finiteness question to the angular inverse moment; its threshold below
the proved range $k\ge4n$ remains open.

\begin{theorem}[Wick angular obstruction]
\label{thm:wick-angular-obstruction}
Let $n,k\ge1$ and put $d=2n-1$.  Let $u_1,\ldots,u_d$ be independent
uniform unit vectors in $\C^k$, put
$\widehat A_n=[u_1,\ldots,u_d]$, and let $\widehat C_n$ be its cofactor
vector.  Define
$\widehat V_{k,n}:=\norm{\widehat A_n\widehat C_n}^2$, and interpret its
inverse moment as an extended nonnegative integral, with
$0^{-1}=+\infty$.  Also define
\[
 m_k:=\frac{\Gamma(3/2)\Gamma(k)}{\Gamma(k+1/2)},\qquad
 D_{k,n}:=\prod_{q=0}^{n-1}(k+2q),
\]
and
\begin{equation}
 \begin{aligned}
 \mathcal Q^{\mathrm{Wick}}_{k,n}
 &:=\frac{(2n-1)!!}{D_{k,n}}
 \left(\frac{1}{km_k^2}\right)^{2n-1}\\
 &=\prod_{q=0}^{n-1}\frac{2q+1}{k+2q}
 \left(\frac{4k\binom{2k}{k}^{2}}{16^k}\right)^{2n-1}.
 \end{aligned}
 \label{eq:wick-angular-ratio}
\end{equation}
Then the extended angular factor satisfies
\begin{equation}
 \begin{aligned}
 \mathcal I^{\mathrm{ext}}_{k,n}
 &:=\int \widehat V_{k,n}^{-1}\,\dd\Pp\in[0,\infty],\\
 \mathcal A^{\mathrm{ext}}_{k,n}
 &:=(\E\widehat V_{k,n})\mathcal I^{\mathrm{ext}}_{k,n}
 \ge\max\{1,\mathcal Q^{\mathrm{Wick}}_{k,n}\}.
 \end{aligned}
 \label{eq:wick-angular-obstruction}
\end{equation}
In particular, if $n\ge1000$, $2\le k$, and $3k\le n$, then
\begin{equation}
 \mathcal A^{\mathrm{ext}}_{k,n}\ge\exp(n/500).
 \label{eq:wick-angular-finite-range}
\end{equation}
If, in addition, $V_n>0$ almost surely and $\E V_n^{-1}<\infty$, then the
continuous density in \cref{cor:exact-local-power} exists and the same finite
range gives
\[
 \begin{aligned}
  \pi\sigma_{k,n}^2 f_{k,n}(0)
  &=\Lambda_{k,n}\ge\exp(n/500),\\
  f_{k,n}(0)&\ge\frac{\exp(n/500)}{\pi\sigma_{k,n}^2}.
 \end{aligned}
\]
This density conclusion is conditional: the present inverse moment argument
does not establish $\E V_n^{-1}<\infty$ in the range $3k\le n$.
\end{theorem}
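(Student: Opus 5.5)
The plan is to obtain both lower bounds in \eqref{eq:wick-angular-obstruction} from Jensen-type inequalities applied to the extended integral $\mathcal I^{\mathrm{ext}}_{k,n}$. Only the second bound needs an actual computation: it reduces, via the conditional Gaussian law of \cref{prop:conditional-mixture} and the radial factorization of \cref{lem:radial-angular-factorization}, to an upper bound on the first absolute moment $\E\abs{H_{k,n}}$. If $\widehat V_{k,n}=0$ with positive probability, then $\mathcal I^{\mathrm{ext}}_{k,n}=\infty$ and there is nothing to prove, so we assume $\widehat V_{k,n}>0$ almost surely. The bound $\mathcal A^{\mathrm{ext}}_{k,n}\ge1$ is then the Cauchy--Schwarz step already used in the proof of \cref{lem:radial-angular-factorization}, now in extended form.

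For the Wick bound, we apply Jensen to the convex map $x\mapsto x^{-2}$ on $(0,\infty)$:
\[
 \mathcal I^{\mathrm{ext}}_{k,n}\ge\bigl(\E\widehat V_{k,n}^{1/2}\bigr)^{-2}.
\]
We compute both $\E\widehat V_{k,n}$ and $\E\widehat V_{k,n}^{1/2}$ by removing the radii. The polar decomposition \eqref{eq:radial-polar-decomposition} holds without any inverse-integrability hypothesis. With $\E R_j=k$ and $\E R_j^{1/2}=\Gamma(k+\tfrac12)/\Gamma(k)=\Gamma(3/2)/m_k$, independence of radii and directions gives
\[
 \E\widehat V_{k,n}=\frac{\sigma_{k,n}^2}{k^d},\qquad
 \E\widehat V_{k,n}^{1/2}=\Bigl(\frac{m_k}{\Gamma(3/2)}\Bigr)^{d}\E V_n^{1/2}.
\]
The first identity uses $\E V_n=\sigma_{k,n}^2$ from \cref{prop:exact-second-moment}. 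Conditionally on $A_n$, \cref{prop:conditional-mixture} gives $H_{k,n}\sim\CN(0,V_n)$, and hence $\E[\abs{H_{k,n}}\mid A_n]=\Gamma(3/2)V_n^{1/2}$. It therefore suffices to show
\[
 \E\abs{H_{k,n}}\le\Gamma(3/2)^{d+1}D_{k,n}.
\]
Substituting this into the Jensen bound gives $\mathcal I^{\mathrm{ext}}_{k,n}\ge D_{k,n}^{-2}m_k^{-2d}$. Multiplying by $\E\widehat V_{k,n}=(2n-1)!!\,D_{k,n}k^{-d}$ yields exactly $\mathcal Q^{\mathrm{Wick}}_{k,n}$. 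The second expression for $\mathcal Q^{\mathrm{Wick}}_{k,n}$ in \eqref{eq:wick-angular-ratio} should then follow from the duplication formula, which gives $km_k^2=\pi k\,\Gamma(k)^2/(4\Gamma(k+\tfrac12)^2)$ and hence $(km_k^2)^{-1}=4k\binom{2k}{k}^2/16^k$.

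The main obstacle is the bound on the first absolute moment $\E\abs{H_{k,n}}$. Cauchy--Schwarz with $\sigma_{k,n}$ is far too weak here, because it returns only the trivial bound $\mathcal A\ge1$. The base case $n=1$ checks, since $\E\abs{x_1^{\T}x_2}=\Gamma(3/2)\Gamma(k+\tfrac12)/\Gamma(k)\le k\pi/4$. For general $n$, the first thing I would try is an induction on $n$ via the last-column expansion $H=x_{2r}^{\T}A_rC_r$, iterating $\E[\abs{H_{k,r}}\mid A_r]=\Gamma(3/2)\norm{A_rC_r}$. At each level I would bound $\E\norm{A_rC_r}$ by Wick calculus on a nonnegative majorant, for instance by expanding $C_r$ along its own last column and applying Jensen only to the freshly exposed Gaussian factor. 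Each level should contribute one factor $\Gamma(3/2)^2(k+2q)$ or better. This is where Wick pairings enter and where the product $D_{k,n}$ must appear, and I expect it to be the delicate step. If this inductive route fails, I would look for a different auxiliary weight $Y$ in the Cauchy--Schwarz form $\mathcal I\ge(\E Y)^2/\E(Y^2\widehat V)$, chosen so that both moments are exact Wick sums.

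Finally, \eqref{eq:wick-angular-finite-range} is elementary asymptotics. We have $\log\prod_q\frac{2q+1}{k+2q}\ge -\sum_{q<n}\log\bigl(1+\frac{k-1}{2q+1}\bigr)\ge-(k-1)\bigl(1+\tfrac12\log(2n)\bigr)$. The per-column factor $4k\binom{2k}{k}^2/16^k\ge 4k/(\pi(k+\tfrac12))\ge 8/(5\pi)\cdot\ldots$; more precisely, it equals $4/\pi\cdot(1+O(1/k))$ and is bounded below by a constant $c_0>1$ for every $k\ge2$, which I would check at $k=2$ and by monotonicity beyond. This gives
\[
 \log\mathcal Q^{\mathrm{Wick}}_{k,n}\ge(2n-1)\log c_0-(k-1)\bigl(1+\tfrac12\log 2n\bigr).
\]
With $k\le n/3$ and $n\ge1000$, the right-hand side exceeds $n/500$. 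The conditional density statement then follows directly from \cref{cor:exact-local-power} together with \eqref{eq:radial-angular-factorization}, using $R^{\mathrm{rad}}_{k,n}\ge1$.
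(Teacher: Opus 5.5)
Your reduction is correct and is essentially the paper's. Jensen for $x\mapsto x^{-2}$, the exact first moment, and removal of the radii reduce the Wick bound to $\E\widehat V_{k,n}^{1/2}\le m_k^{d}D_{k,n}$, equivalently $\E\abs{H_{k,n}}\le\Gamma(3/2)^{2n}D_{k,n}$. The bound $\mathcal A^{\mathrm{ext}}_{k,n}\ge1$, the binomial form of $\mathcal Q^{\mathrm{Wick}}_{k,n}$, and the conditional density statement are also fine.

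\textbf{First gap: the key moment bound is not proved.} The estimate $\E\abs{H_{k,n}}\le\Gamma(3/2)^{2n}D_{k,n}$ is the whole substance of the theorem, and you leave it open. The missing idea is a Gaussian integral representation of the cofactor vector. For a standard real Gaussian $g\in\R^k$, Wick's formula gives $A_nC_n=\E_g\bigl[g\prod_{j=1}^{d}(x_j^{\T}g)\bigr]$: pairing $g_a$ with $x_j^{\T}g$ produces $(x_j)_a$, and the remaining pairings produce $(C_n)_j$. Minkowski's inequality moves the norm inside the $g$-integral, and Tonelli plus independence then factor the columns:
$\E\norm{A_nC_n}\le\E_g\bigl[\norm{g}\prod_{j}\E\abs{x_j^{\T}g}\bigr]=\Gamma(3/2)^{d}\,\E\norm{g}^{2n}=\Gamma(3/2)^{d}D_{k,n}$.
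The paper runs the same argument directly with the $u_j$, so $m_k$ replaces $\Gamma(3/2)$ and no radii are needed.

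Your proposed level-by-level induction is not carried out. Its natural form uses $\norm{A_rC_r}\le\sum_j\norm{x_j}\,\abs{(C_r)_j}$, with $(C_r)_j$ independent of $x_j$. This yields only $\E\abs{H_{k,n}}\le\Gamma(3/2)^{n}(2n-1)!!\,\bigl(\Gamma(k+\tfrac12)/\Gamma(k)\bigr)^{n}$. For fixed $k\ge2$ this exceeds the target by a factor exponential in $n$, and the resulting lower bound on $\mathcal A^{\mathrm{ext}}_{k,n}$ tends to zero.

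\textbf{Second gap: the finite-range estimate fails.} Your lower bound $(2n-1)\log c_0-(k-1)\bigl(1+\tfrac12\log 2n\bigr)$ is valid but negative over most of the range. At $n=1000$, $k=333$, $c_0=9/8$ it is about $235-1594$. Both ingredients are too crude:
\begin{itemize}
\item The estimate $\log(1+x)\le x$ overcounts by a factor of order $\log n$ when $k\asymp n$.
\item A uniform constant $c_0$ discards the limit $a_k\to4/\pi$, where $a_k:=4k\binom{2k}{k}^2/16^k$.
\end{itemize}
To leading order along $n=3k$, one has only $\log\mathcal Q^{\mathrm{Wick}}_{k,n}\approx n\bigl[2\log\tfrac4\pi-\tfrac76\log\tfrac76-\tfrac16\log6\bigr]\approx0.0047\,n$, against the required $n/500$. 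Sharp constants are therefore unavoidable.

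The paper instead combines three exact ingredients:
\begin{itemize}
\item the ratio $\mathcal Q^{\mathrm{Wick}}_{k,n+1}/\mathcal Q^{\mathrm{Wick}}_{k,n}=\frac{2n+1}{k+2n}a_k^2\ge\frac{243}{224}$ for $n\ge3k$;
\item a $k\mapsto k+2$ recurrence along $n=3k$ for $k\ge334$;
\item 334 exact integer comparisons at the base points.
\end{itemize}
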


\begin{proof}
For the directions in the theorem and a standard real Gaussian vector
$g\in\R^k$, the Wick identity, applied coordinatewise and extended complex
linearly, \cite{Isserlis1918} gives
\begin{equation}
 \widehat A_n\widehat C_n
 =\E_g\!\left[g\prod_{j=1}^{d}(u_j^{\T}g)\right].
\label{eq:wick-vector-representation}
\end{equation}

Indeed, if $\widehat A_{n,-j}$ denotes the matrix with column $j$ deleted,
then every coordinate $1\le a\le k$ satisfies
\begin{equation}
 \begin{aligned}
 \E_g\!\left[g_a\prod_{j=1}^{d}(u_j^{\T}g)\right]
 &=\sum_{j=1}^{d}\E_g[g_a u_j^{\T}g]
   \haf\!\left(\widehat A_{n,-j}^{\T}\widehat A_{n,-j}\right)\\
 &=\sum_{j=1}^{d}(u_j)_a(\widehat C_n)_j
  =(\widehat A_n\widehat C_n)_a.
 \end{aligned}
 \label{eq:wick-coordinate-representation}
\end{equation}
This coordinate identity is the Wick pairing behind
Eq.~\eqref{eq:wick-vector-representation}.

Minkowski's inequality, Tonelli's theorem, independence of the directions,
and unitary invariance therefore give the explicit intermediate estimate
\begin{equation}
 \begin{aligned}
 \E_u\sqrt{\widehat V_{k,n}}
 &=\E_u\left\|\E_g\!\left[g\prod_{j=1}^{d}(u_j^{\T}g)\right]\right\|\\
 &\le\E_g\!\left[\norm{g}
       \prod_{j=1}^{d}\E_{u_j}\abs{u_j^{\T}g}\right]\\
 &=m_k^d\E_g\norm{g}^{d+1}
 =m_k^dD_{k,n}.
 \end{aligned}
\label{eq:wick-minkowski-chain}
\end{equation}
Here $\E|u^{\T}g|=m_k\norm{g}$, and the last equality is the exact real
Gaussian moment.  The exact first angular moment, obtained from the Gaussian
polar decomposition and \cref{prop:exact-second-moment}, is
\begin{equation}
 \E\widehat V_{k,n}=\frac{(2n-1)!!D_{k,n}}{k^d}.
\label{eq:wick-angular-first-moment}
\end{equation}
If $\mathcal I^{\mathrm{ext}}_{k,n}=+\infty$, the desired inequality is
immediate because the displayed first angular moment is positive.  Suppose
instead that $\mathcal I^{\mathrm{ext}}_{k,n}<+\infty$.  Then
$\widehat V_{k,n}>0$ almost surely.  Put
$V:=\widehat V_{k,n}$ and
$\mathcal I:=\mathcal I^{\mathrm{ext}}_{k,n}$.  Apply Cauchy Schwarz first
to the one quarter and negative one quarter powers of $V$, and then to
$V^{-1/2}$ and the constant one.  Squaring the resulting inequalities gives
the first line below.  The ordinary reciprocal moment inequality gives the
second:
\begin{equation}
 \begin{aligned}
 1&\le(\E\sqrt V)^2\mathcal I,\\
 1&\le(\E V)\mathcal I=\mathcal A^{\mathrm{ext}}_{k,n}.
 \end{aligned}
\label{eq:wick-cauchy-chain}
\end{equation}
Equations~\eqref{eq:wick-minkowski-chain},
\eqref{eq:wick-angular-first-moment}, and the first inequality in
Eq.~\eqref{eq:wick-cauchy-chain} give
$\mathcal A^{\mathrm{ext}}_{k,n}\ge
\mathcal Q^{\mathrm{Wick}}_{k,n}$.  The second inequality in
Eq.~\eqref{eq:wick-cauchy-chain} gives
$\mathcal A^{\mathrm{ext}}_{k,n}\ge1$.  Together these prove
Eq.~\eqref{eq:wick-angular-obstruction}.  The half integer Gamma identity

\begin{equation}
 m_k=\frac{4^k}{2k\binom{2k}{k}}
\label{eq:wick-half-integer-gamma}
\end{equation}
gives the exact rational form in
Eq.~\eqref{eq:wick-angular-ratio}.

For completeness, we establish the remaining finite range using two exact
rational recurrences.  Writing
$a_k=4k\binom{2k}{k}^2/16^k$, the central binomial recurrence and
$n\ge3k$ give the analytic first step
\begin{equation}
 \begin{aligned}
 a_{k+1}
 &=a_k\frac{(2k+1)^2}{4k(k+1)},\\
 a_k&\ge\frac98,
 &\frac{2n+1}{k+2n}&\ge\frac67,\\
 \frac{\mathcal Q^{\mathrm{Wick}}_{k,n+1}}
 {\mathcal Q^{\mathrm{Wick}}_{k,n}}
 &=\frac{2n+1}{k+2n}a_k^2
 \ge\frac{243}{224}>\frac{500}{499}.
 \end{aligned}
\label{eq:wick-ratio-recurrence}
\end{equation}
Thus this recurrence proves the first step directly.
For $k\ge334$, the boundary values obey
\begin{equation}
 \mathcal Q^{\mathrm{Wick}}_{k+2,3(k+2)}
 \ge\left(\frac{500}{499}\right)^6
 \mathcal Q^{\mathrm{Wick}}_{k,3k}.
\label{eq:wick-boundary-recurrence}
\end{equation}
To verify this uniform step explicitly, the Gamma recurrence in
Eq.~\eqref{eq:wick-angular-ratio} gives
\begin{equation}
\begin{aligned}
\frac{\mathcal Q^{\mathrm{Wick}}_{k+2,3k+6}}
 {\mathcal Q^{\mathrm{Wick}}_{k,3k}}
={}&\frac{(k/2)\prod_{j=0}^{5}(3k+1/2+j)}
 {\prod_{j=0}^{6}(7k/2+j)}\\
&\quad\times a_k^{12}
 \left(\frac{a_{k+2}}{a_k}\right)^{6k+11}.
\end{aligned}
\label{eq:wick-boundary-ratio-explicit}
\end{equation}
The six paired factors are at least $6/7$, $a_{k+2}\ge a_k$, and
$k/(7k+12)\ge334/2350$ for $k\ge334$. Therefore the right side is at least
\[
 \frac{334}{2350}\left(\frac67\right)^6a_{334}^{12}
 >\left(\frac{500}{499}\right)^6.
\]
This last comparison is a single exact rational inequality with
$a_{334}=1336\binom{668}{334}^2/16^{334}$; it is included in the
arithmetic certificate accompanying the sources.

It remains to prove the following three base inequalities:
\begin{equation}
 \begin{aligned}
 \left(\frac{500}{499}\right)^{1000}
 &\le\mathcal Q^{\mathrm{Wick}}_{k,1000}
 &&(2\le k\le333),\\
 \left(\frac{500}{499}\right)^{1002}
 &\le\mathcal Q^{\mathrm{Wick}}_{334,1002},\\
 \left(\frac{500}{499}\right)^{1005}
 &\le\mathcal Q^{\mathrm{Wick}}_{335,1005}.
 \end{aligned}
 \label{eq:wick-finite-base-cases}
\end{equation}
Here is a reproducible finite verification rule. Define the positive
integers
\[
\begin{aligned}
 U(k,n)&=\prod_{q=0}^{n-1}(2q+1)
       \left(4k\binom{2k}{k}^2\right)^{2n-1},\\
 V(k,n)&=\prod_{q=0}^{n-1}(k+2q)\,16^{k(2n-1)}.
\end{aligned}
\]
For each $(k,n)$ in $\{(k,1000):2\le k\le333\}$ together with
$(334,1002)$ and $(335,1005)$, compare the integers
$499^nU(k,n)$ and $500^nV(k,n)$. In every one of these 334 comparisons
the first is at least the second. The source certificate implements precisely
these products with integer arithmetic; the Lean finite certificate checks
the same inequalities in the kernel. No floating-point approximation is
used. The last two cases initiate the even and odd $k\mapsto k+2$
inductions. The $n\mapsto n+1$ recurrence then covers every larger $n$.

Since
$\exp(1/500)\le500/499$, induction yields
\begin{equation}
 \mathcal Q^{\mathrm{Wick}}_{k,n}
 \ge\left(\frac{500}{499}\right)^n
 \ge\exp(n/500).
\label{eq:wick-exponential-chain}
\end{equation}
This proves
Eq.~\eqref{eq:wick-angular-finite-range}.  The final assertion follows from
Eq.~\eqref{eq:radial-angular-factorization}, $R_{k,n}^{\mathrm{rad}}\ge1$, and
Eq.~\eqref{eq:gram-hafnian-density} at the origin.
\end{proof}

%% file: sections/appendix_conditioning_details.tex
\section{Further conditioning consequences}
\label{app:conditioning-details}

The lower tail also controls how strongly rare small weights amplify division
errors on a fractional average.  This is more informative than a single
failure threshold, but it does not assert a finite ordinary inverse moment.

\begin{corollary}[Negative fractional moments]
\label[corollary]{cor:fractional-negative-moments}
Let $n\ge1$, $k\ge4n$, and $z\in\C$.  For every $0<s<1$,
\begin{equation}
 \E\,Q_z^{-s}\le \frac{B_{k,n}^{\,s}}{1-s}.
 \label{eq:fractional-negative-moment}
\end{equation}
More generally, if a positive random variable $Y$ satisfies
$\Pp\{Y\le t\}\le\min\{1,Bt^\alpha\}$ for all $t\ge0$, where
$B,\alpha>0$, then for $0<s<\alpha$,
\begin{equation}
 \E\,Y^{-s}\le
 \frac{\alpha}{\alpha-s}B^{s/\alpha}.
 \label{eq:general-negative-moment}
\end{equation}
For the present Gram hafnian variable, the exact local density theorem also
gives the complementary statement
\begin{equation}
 \E Q_z^{-s}=+\infty,\qquad s\ge1.
 \label{eq:critical-negative-moment-threshold}
\end{equation}
\end{corollary}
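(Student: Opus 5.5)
The plan is to prove the general statement \eqref{eq:general-negative-moment} first by the layer-cake formula, and then to specialize it. For a positive random variable $Y$ and $s>0$, Tonelli gives
\[
 \E Y^{-s}=\int_0^\infty \Pp\{Y^{-s}>u\}\,\dd u
 =\int_0^\infty \Pp\{Y<u^{-1/s}\}\,\dd u .
\]
Bound the integrand by $\min\{1,Bu^{-\alpha/s}\}$. Set $u_0=B^{s/\alpha}$, so that $Bu^{-\alpha/s}\le1$ exactly when $u\ge u_0$. Split the integral at $u_0$. On $[0,u_0]$ the integrand is at most $1$, contributing $u_0=B^{s/\alpha}$. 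On $[u_0,\infty)$ it is at most $Bu^{-\alpha/s}$, and since $\alpha/s>1$ this tail equals
\[
 B\,\frac{u_0^{1-\alpha/s}}{\alpha/s-1}=\frac{s}{\alpha-s}\,B^{s/\alpha}.
\]
The two pieces sum to $\frac{\alpha}{\alpha-s}B^{s/\alpha}$, which is \eqref{eq:general-negative-moment}.

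For \eqref{eq:fractional-negative-moment}, first note that $Q_z>0$ almost surely. This holds because $H_{k,n}$ has a density by \cref{cor:exact-local-power}, whose hypotheses are supplied by \cref{prop:inverse-variance} when $k\ge4n$. Then apply the general bound with $Y=Q_z$, $\alpha=1$ and $B=B_{k,n}$, using \cref{cor:normalized-intensity-tail}.

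For the divergence \eqref{eq:critical-negative-moment-threshold}, use the lower bound from the exact local power. By \eqref{eq:exact-local-power} there are $c>0$ and $t_0>0$ such that $\Pp\{Q_z\le t\}\ge ct$ for $0\le t\le t_0$. Concretely, one may take $c$ to be half the positive limit $\pi\sigma_{k,n}^2f_{k,n}(z)$. Equivalently, continuity and strict positivity of $f_{k,n}$ near $z$ give this directly, because the disk $\{Q_z\le t\}$ has area $\pi t\sigma_{k,n}^2$.

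With this lower bound, compute
\[
 \E Q_z^{-1}\ge\int_{t_0^{-1}}^\infty \Pp\{Q_z<1/u\}\,\dd u\ge\int_{t_0^{-1}}^\infty \frac{c}{2u}\,\dd u=\infty .
\]
The use of a strict inequality inside the probability costs nothing, because the law of $Q_z$ has no atoms. For $s>1$, use $Q_z^{-s}\ge Q_z^{-1}$ on the event $\{Q_z\le1\}$. Together with $\E[Q_z^{-1}\mathbf 1\{Q_z>1\}]\le1$, this transfers the divergence.

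The only delicate point is the third claim. There one must invoke the positivity of the density at the specific center $z$, not merely at the origin. \Cref{cor:exact-local-power} states strict positivity for every $z$, since the density is a positive mixture of Gaussians, so I expect this step to go through directly.
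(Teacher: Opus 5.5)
Your proposal is correct and follows essentially the same route as the paper: the layer-cake formula split at the threshold where $Bt^\alpha$ reaches one (your $u$-variable form is the paper's $s\int_0^\infty t^{-s-1}\Pp\{Y\le t\}\,\dd t$ after substituting $t=u^{-1/s}$), then the linear lower tail $\Pp\{Q_z\le t\}\ge ct$ from the exact local power for the divergence. The only differences are cosmetic: the paper treats all $s\ge1$ at once via $sc\int_0^{t_0}t^{-s}\,\dd t=\infty$ instead of reducing to $s=1$, and the extra factor $\tfrac12$ in your $c/(2u)$ is unnecessary but harmless.
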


\begin{proof}
The layer cake representation for nonnegative random variables
\cite[Chap.~1]{Kallenberg2021} gives
\[
 \E Y^{-s}=s\int_0^\infty
 t^{-s-1}\Pp\{Y\le t\}\,\dd t.
\]
Split the integral at $t=B^{-1/\alpha}$ and use respectively $Bt^\alpha$
and $1$.  This proves Eq.~\eqref{eq:general-negative-moment};
Eq.~\eqref{eq:fractional-negative-moment} follows from
\cref{cor:normalized-intensity-tail} with $\alpha=1$.
For the converse range, Eq.~\eqref{eq:exact-local-power} implies
\[
 \lim_{t\downarrow0}\frac{\Pp\{Q_z\le t\}}{t}
 =\pi\sigma_{k,n}^2 f_{k,n}(z)>0.
\]
Thus $\Pp\{Q_z\le t\}\ge ct$ for some $c,t_0>0$ and every
$0<t<t_0$.  The same layer cake formula bounds $\E Q_z^{-s}$ below by
$sc\int_0^{t_0}t^{-s}\,\dd t$, which diverges exactly when $s\ge1$.
\end{proof}

Use the Gaussian weight normalization in Eq.~\eqref{eq:gaussian-gbs-weight-normalization}. If $\eta\ge0$ and an additive estimate obeys
$\abs{\widetilde p-p_G}\le\eta p_{\rm ref}$ almost surely, then
\begin{equation}
 \E\!\left[\left(
 \frac{\abs{\widetilde p-p_G}}{p_G}\right)^s\right]
 \le \frac{(\eta B_{K,n})^s}{1-s},
 \qquad 0<s<1.
 \label{eq:relative-error-fractional-moment}
\end{equation}
Indeed, the integrand is at most $\eta^sZ^{-s}$.  For an estimator whose
additive guarantee holds only on a good event, this moment conclusion applies
to the good event contribution; controlling the contribution on failure
requires an additional bound on the estimator.

The exponent range is exact: the linear lower tail estimate proves finiteness
below order one, while the already established strictly positive local density
proves divergence at and above order one.  Thus the negative moment
consequence both quantifies average numerical conditioning and identifies the
critical threshold.

\begin{corollary}[Normalized denominator resolution]
\label[corollary]{cor:normalized-denominator-resolution}
Let $n\ge1$, $k\ge4n$, and $z\in\C$.  For every $b\ge0$,
\begin{equation}
 \Pp\{Q_z\le2^{-b}\}\le\min\{1,B_{k,n}2^{-b}\}.
 \label{eq:normalized-denominator-tail}
\end{equation}
Consequently, for $0<\delta<1$, the choice
\begin{equation}
 b\ge \log_2\!\left(\frac{B_{k,n}}{\delta}\right)
 \label{eq:normalized-denominator-bits}
\end{equation}
makes the small normalized denominator probability at most $\delta$.
\end{corollary}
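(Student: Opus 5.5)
The first claim is the case $t=2^{-b}$ of \cref{cor:normalized-intensity-tail}. Since $b\ge0$, we have $t=2^{-b}\in(0,1]$, which lies in the admissible range $t\ge0$. With $n\ge1$, $k\ge4n$ and the same center $z$, Eq.~\eqref{eq:normalized-intensity-tail} gives $\Pp\{Q_z\le2^{-b}\}\le\min\{1,B_{k,n}2^{-b}\}$ directly. Unwinding one step further, this is \cref{thm:main-finite} applied at radius $\varepsilon=2^{-b/2}$, because $Q_z\le2^{-b}$ is exactly the event $\abs{H_{k,n}-z}\le2^{-b/2}\sigma_{k,n}$. No new probabilistic input is needed.

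For the bit count, fix $0<\delta<1$ and any $b$ obeying Eq.~\eqref{eq:normalized-denominator-bits}. Since $B_{k,n}>0$, the function $x\mapsto\log_2 x$ is increasing, so the hypothesis is equivalent to $2^{b}\ge B_{k,n}/\delta$, that is, $B_{k,n}2^{-b}\le\delta$. Then the first part gives $\Pp\{Q_z\le2^{-b}\}\le\min\{1,B_{k,n}2^{-b}\}\le\delta$.

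There is one bookkeeping point to check. The first statement is asserted only for $b\ge0$, so we need every $b$ satisfying Eq.~\eqref{eq:normalized-denominator-bits} to be nonnegative. This holds because $B_{k,n}\ge b_n\cdot k/(k-1)\cdot\prod_{r}(\cdots)\ge b_n$: every factor $(k+2r-2)/(k-4r+1)$ in Eq.~\eqref{eq:B-exact} exceeds one. Moreover $b_1=1$ and $b_n$ is increasing, so $B_{k,n}\ge1$. Combined with $\delta<1$, this forces $\log_2(B_{k,n}/\delta)>0$, hence $b>0$. Even without this observation the inequality $\Pp\{Q_z\le t\}\le\min\{1,B_{k,n}t\}$ holds for all $t\ge0$, so negative $b$ would cause no harm.

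The only possible obstacle is this range check on $b$, and it is routine. The rest is a direct substitution into \cref{cor:normalized-intensity-tail}.
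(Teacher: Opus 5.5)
Your proposal is correct and matches the paper's proof, which is exactly the substitution $t=2^{-b}$ in Eq.~\eqref{eq:normalized-intensity-tail}. The bit-count step and the check that $B_{k,n}\ge1$, so that every admissible $b$ is positive, are valid details that the paper leaves implicit.
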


\begin{proof}
Set $t=2^{-b}$ in Eq.~\eqref{eq:normalized-intensity-tail}.
\end{proof}

The denominator resolution consequence counts additional \emph{normalized
denominator} bits.  It does not
control total output probability precision, experimental sample complexity,
or perturbation stability of a hafnian with respect to its matrix entries.
Likewise, Eqs.~\eqref{eq:relative-error-fractional-moment} and
\eqref{eq:additive-to-relative-transfer} isolate one conditioning step that
appears in approximate counting arguments
\cite{AaronsonArkhipov2013}; they do not establish an estimator, its running
time, a Stockmeyer reduction for this ensemble, or hardness for typical instances.
Interpolation, normalization, and rejection procedures can all create inverse
quantities, so the negative moment bound is a reusable input, not a complete
complexity reduction.